\title{Combinatorial Mutations and Block Diagonal Polytopes}
\author{Oliver Clarke, Akihiro Higashitani, and Fatemeh Mohammadi }
\theoremstyle{plain}
\newtheorem{theorem}{Theorem}
\newtheorem{proposition}{Proposition}
\newtheorem{corollary}{Corollary}
\newtheorem{lemma}{Lemma}
\theoremstyle{definition}
\newtheorem{definition}{Definition}
\newtheorem{example}{Example}
\theoremstyle{remark}
\newtheorem{remark}{Remark}
\newcommand{\RR}{\mathbb{R}}
\newcommand{\ZZ}{\mathbb{Z}}
\newcommand{\BL}{\mathcal{B}_\ell}
\newcommand{\MB}{\mathcal{B}}
\newcommand{\conv}{{\rm Conv}}
\def\wb{{\mathbf w}}
\newcommand{\Gr}{{\rm Gr}}
\newcommand{\MP}{{P}}
\newcommand{\init}{{\rm in}}
\begin{document}

\maketitle

\begin{abstract}
Matching fields were introduced by Sturmfels and Zelevinsky to study certain Newton polytopes and more recently have been shown to give rise to toric degenerations of various families of varieties.
Whenever a matching field gives rise to a toric degeneration, the associated polytope of the toric variety coincides with the matching field polytope.
We study combinatorial mutations, which are analogues of cluster mutations for polytopes, of matching field polytopes and show that the property of giving rise to a toric degeneration of the Grassmannians, is preserved by mutation. 
Moreover the polytopes arising through mutations 
are Newton-Okounkov bodies for the Grassmannians with respect to certain full-rank valuations.
We produce a large family of such polytopes, extending the family of so-called block diagonal matching fields.
\end{abstract}
   {
  \hypersetup{linkcolor=black}
  \tableofcontents
}
\medskip

\section{Introduction}

A toric degeneration, of a given variety $X$, is a flat family over the affine line $\mathbb A^1$ such that the fiber over $0$ is a toric variety and all other fibers are isomorphic to $X$. Toric degenerations are a valuable tool which can be used to analyze algebraic varieties \cite{An13}. They facilitate an understanding of general varieties in terms of the geometry of their associated toric varieties. Additionally, a toric variety is endowed with a polytope, or polyhedral fan, whose combinatorial data reveals geometric invariants of the toric variety. 
Moreover, toric varieties are prominent examples of irreducible varieties whose defining equations are binomials. Specialized algorithms in optimization theory and statistics have been developed to efficiently handle varieties defined by binomial equations \cite{robbiano1999computing, kahle2010decompositions, chen2017parallel}. Hence, degenerating a variety into a toric variety enables us to expand the computational techniques from toric geometry to arbitrary varieties.

Recently in \cite{kaveh2019khovanskii}, Kaveh and Manon made a direct connection between the theory of Newton-Okounkov bodies, tropical geometry and toric degenerations arising in both contexts. 
More precisely, let $X =V(I)$ be a projective variety and Trop$(X)$ its tropicalization. Points within the interior of top-dimensional cones of Trop$(X)$ are good candidates to give toric degenerations through Gr\"obner degeneration. In particular, if the corresponding initial ideal is binomial and prime, the cone is called a maximal prime cone. In this case, it is possible to construct a full-rank valuation from the prime cone and compute the Newton-Okounkov body which coincides with the polytope of the toric variety corresponding to the prime cone. More recently, \cite{escobar2019wall} Escobar and Harada studied the Newton-Okounkov bodies of adjacent maximal prime cones and described how the associated Newton-Okounkov bodies are related by so-called flip and shift operations. These are particular piecewise linear maps which are closely related to mutation. In particular, for Grassmannians Gr$(2,n)$ the wall-crossing procedure is identified with cluster mutations \cite{bossinger2020families}. In practice, it is a challenge to determine whether toric degenerations exist and, if so, how to compute them. There are a number of different constructions yielding toric degenerations for Gr$(k,n)$
including those arising from cluster algebras \cite{rietsch2017newton, bossinger2020families}, 
Gelfand-Tsetlin polytopes \cite{FvectorGC, KOGAN}, small Grassmannians such as Gr$(2,n)$, Gr$(3,n)$ \cite{speyer2004tropical, herrmann2009draw, BFFHL, KristinFatemeh}, and matching fields \cite{KristinFatemeh, clarke2019toric}.
However, the structures and relations between the polytopes arising from these toric degenerations are not well understood. In this work we use combinatorial mutations to find relations between matching field polytopes.

Combinatorial mutations were introduced by Akhtar, Coates, Galkin, and Kasprzyk in the context of mirror symmetry for Fano varieties \cite{akhtar2012minkowski} and were used to give a classification of Fano manifolds.  
Given a Fano $n$-fold $X$, a Laurent polynomial $f$ in $n$ variables is called a \textit{mirror partner} of $X$ if the classical period of $f$ coincides with the quantum period of $X$, see \cite{akhtar2012minkowski, GalkinUsnich} and the references therein. 
In \cite{GalkinUsnich}, the notion of a mutation of a Laurent polynomial was introduced as a birational transformation analogue of a cluster mutation and is shown to preserve the period of the polynomial. A \textit{combinatorial mutation} is the transformation of the Newton polytope of a Laurent polynomial undergoing mutation. This can be thought of as a kind of local transformation for lattice polytopes. 
The theory of combinatorial mutations was further developed in \cite{higashitani2020two} from a combinatorial viewpoint, and has been used to study combinatorial mutation equivalence classes of Newton-Okounkov bodies of flag varieties in \cite{fujita2020newton}. 
More specifically, it is shown that string polytopes, Nakashima-Zelevinsky polytopes and FFLV polytopes, which can all be identified as Newton-Okounkov bodies of flag varieties, are combinatorial mutation equivalent. 
Some important properties of the lattice polytopes, such as the Ehrhart polynomial, or properties of the corresponding toric varieties are preserved by combinatorial mutations. Hence, it is natural to expect that other properties are also preserved. In fact, we will see that the property of giving rise to a toric degeneration is also preserved for matching field polytopes, see Theorem~\ref{thm:toric_deg}.

For the Grassmannian Gr$(k,n)$, a \textit{matching field} is a map taking each Pl\"uker variable to a permutation, and can be interpreted a choice of initial term for the corresponding Pl\"ucker form. They were introduced by Sturmfels and Zelevinksy \cite{sturmfels1993maximal} to study the Newton polytope of a product of maximal minors of a generic matrix and have proved to be a useful tool in many contexts.  Matching fields can be viewed as a collection of perfect matchings of a complete bipartite graph. In \cite{loho2020matching}, Smith and Loho take this graph theoretic approach to study linkage matching fields and their associated Chow covectors. 
Matching fields naturally encode the image of the tropical Pl\"ucker map taking each Pl\"ucker variable to its corresponding tropical determinant. Such matching fields are called \textit{coherent}, see Definition~\ref{def:matching_field}, and are used in \cite{fink2015stiefel} to study the structure of the image of the tropical Stiefel map. The points in top-dimensional cones of the tropical Grassmannian, defined by Speyer and Sturmfels in \cite{speyer2004tropical}, parametrized by matching fields  provide a good framework for studying toric degenerations of Grassmannians. In \cite{KristinFatemeh}, the authors define the family of so-called block diagonal matching fields and show that they give rise to almost all possible Gr\"obner degenerations of Gr$(3,n)$ up to isomorphism. Moreover, block diagonal matching fields also give rise to toric degenerations for: Gr$(k,n)$ and their Schubert varieties \cite{clarke2019toric}, flag varieties Fl$_n$ \cite{OllieFatemeh2} and their Schubert varieties \cite{OllieFatemeh3}.

Each matching field $\Lambda$ admits a toric ideal $J_\Lambda$ with associated polytope $\MP_\Lambda$, see Definition~\ref{def:matching_field_polytope}. We show that understanding the polytope associated to a matching field is equivalent to finding toric degenerations of the Grassmannian as follows.

\begin{theorem}\label{thm:toric_deg}
Let $\Lambda$ be a coherent matching field for the Grassmannian $\Gr(k,n)$ with polytope $\MP_\Lambda$. If $\MP_\Lambda$ is obtained from the Gelfand-Tsetlin polytope by a sequence of combinatorial mutations, then $\Lambda$ gives rise to a toric degeneration of $\Gr(k,n)$. 
\end{theorem}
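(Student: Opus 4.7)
The plan is to compare Hilbert functions on both sides of the candidate degeneration. Recall first that the Gelfand--Tsetlin polytope $\MP_{GT}$ is the polytope of a known toric degeneration of $\Gr(k,n)$ in its Pl\"ucker embedding (Gonciulea--Lakshmibai, Kogan--Miller); in particular, the Ehrhart polynomial of $\MP_{GT}$ agrees with the Hilbert polynomial of $\Gr(k,n)$. A fundamental property of combinatorial mutations, established in \cite{akhtar2012minkowski}, is that they preserve the Ehrhart polynomial of a lattice polytope. Consequently, when $\MP_\Lambda$ is mutation-equivalent to $\MP_{GT}$, the Ehrhart polynomial of $\MP_\Lambda$ also matches the Hilbert polynomial of $\Gr(k,n)$.

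Next, coherence of $\Lambda$ supplies a weight vector $w_\Lambda$ on the Pl\"ucker variables that induces a well-defined initial ideal $\init_{w_\Lambda}(I_{k,n})$ of the Pl\"ucker ideal $I_{k,n}$. By the very construction of the matching field ideal $J_\Lambda$, each of its binomial generators is the $w_\Lambda$-initial form of a Pl\"ucker relation, yielding the routine inclusion
\[
J_\Lambda \subseteq \init_{w_\Lambda}(I_{k,n}).
\]
The conclusion of the theorem is equivalent to upgrading this inclusion to an equality, since equality means that $w_\Lambda$ realises a flat degeneration of $\Gr(k,n)$ to the toric variety cut out by $J_\Lambda$, and $J_\Lambda$ is by definition the toric ideal of $\MP_\Lambda$.

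To promote the inclusion to equality I would argue by Hilbert function. Flatness of Gr\"obner degeneration gives that $S/I_{k,n}$ and $S/\init_{w_\Lambda}(I_{k,n})$ share the Hilbert polynomial of $\Gr(k,n)$. On the other hand, the Hilbert function of $S/J_\Lambda$ is controlled by the Ehrhart polynomial of $\MP_\Lambda$, and these agree asymptotically in every case and in every degree once $\MP_\Lambda$ is normal. By the first paragraph these polynomials coincide. Combining this with the inclusion above and the fact that $J_\Lambda$ is a prime ideal of the correct Krull dimension rules out any strict containment, forcing $J_\Lambda = \init_{w_\Lambda}(I_{k,n})$.

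The main obstacle lies in the Hilbert-function comparison for $S/J_\Lambda$: one must either verify that normality of $\MP_\Lambda$ persists under the mutations in play, or carry out the comparison only in sufficiently large degree and then extract equality of ideals from primality and matching leading coefficients. A secondary subtlety is bookkeeping: the combinatorial mutations act abstractly on lattice polytopes, so one should check that they correctly translate to the vertex configurations determining the toric variety of $\MP_\Lambda$, so that \emph{Ehrhart polynomial of $\MP_\Lambda$} genuinely matches \emph{Hilbert function of the toric variety attached to its integer points}.
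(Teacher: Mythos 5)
Your high-level strategy is the same as the paper's: compare a mutation-invariant numerical invariant of $\MP_\Lambda$ with the corresponding invariant of $\Gr(k,n)$, and use primeness of $J_\Lambda$ to force equality. But there is a genuine error in the bookkeeping of the ``routine inclusion'', and that error hides the actual content of the theorem.

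You assert
\[
J_\Lambda \subseteq \init_{w_\Lambda}(I_{k,n}),
\]
justified by saying that each binomial generator of $J_\Lambda$ is the $w_\Lambda$-initial form of a Pl\"ucker relation. That is not part of the construction of $J_\Lambda$: the matching field ideal is defined as the kernel of the monomial map $\varphi_\Lambda$, not as the ideal generated by initial forms. In fact this inclusion is precisely what is being proved, so taking it as routine renders the argument circular. The inclusion that really is routine (and is Proposition~\ref{thm:init_subset_of_matching_ideal} in the paper, following from \cite[Lemma~11.3]{sturmfels1996grobner}) goes the other way: for any $f \in G_{k,n}$, the image $\varphi_\Lambda(\init_{\wb_M}(f))$ is the lowest-weight part of $\psi(f)=0$, hence zero, giving
\[
\init_{\wb_M}(G_{k,n}) \subseteq J_\Lambda.
\]
With the correct direction, your Hilbert-function comparison would still close the gap: a degreewise surjection $S/\init_{\wb_M}(G_{k,n}) \twoheadrightarrow S/J_\Lambda$ with equal Hilbert functions must be an isomorphism. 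But stated as written, the proof does not go through.

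The second issue is one you already flag: matching the Ehrhart polynomial of $\MP_\Lambda$ with the Hilbert function of $S/J_\Lambda$ requires $\MP_\Lambda$ to be normal (IDP), and that is not established. Your fallback --- comparing only Hilbert \emph{polynomials} and extracting equality from primality --- is essentially what the paper does, but in a cleaner form: the paper compares \emph{degrees} rather than Ehrhart/Hilbert polynomials. The degree of the toric variety $V(J_\Lambda)$ equals the normalized volume of $\MP_\Lambda$ with no normality hypothesis at all, combinatorial mutations preserve normalized volume, and the Gelfand--Tsetlin polytope has volume equal to $\deg \Gr(k,n)$. Since $\init_{\wb_M}(G_{k,n}) \subseteq J_\Lambda$ with both sides of the same dimension, $J_\Lambda$ is a minimal prime of the initial ideal, and the degree count forces it to be the only component. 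Replacing Ehrhart by degree both sidesteps the normality question and makes the mutation-invariance statement you need as weak as possible.
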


As a result we can systematically create new toric degenerations for the Grassmannian from old. In particular, we investigate the block diagonal matching fields, see Definition~\ref{def:block}. These are examples of coherent matching fields with particularly simple description. We show that all block diagonal matching field polytopes are related by a sequence of combinatorial mutations.

\begin{theorem}\label{thm:block_diag_polytope_mutation}
Any pair of block diagonal matching field polytopes can be obtained from one another by a sequence of combinatorial mutations such that all intermediate polytopes are matching field polytopes.
\end{theorem}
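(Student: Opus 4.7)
The plan is to produce a sequence of combinatorial mutations carrying any block diagonal matching field polytope $\MP_\MB$ to any other $\MP_{\MB'}$, with every intermediate polytope again a matching field polytope (not necessarily of a block diagonal matching field). It suffices to prove that each $\MP_\MB$ is linked in this way to a fixed \emph{base point}, for which we take the Gelfand-Tsetlin polytope $\MP_{\MB_0}$ associated with the diagonal matching field. Any two block diagonal matching field polytopes can then be connected by concatenating their mutation sequences through $\MP_{\MB_0}$ and using the invertibility of combinatorial mutations.

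First, I would index the block diagonal matching fields by compositions of $n$, with $\MB_0$ corresponding to the trivial composition, and define an \emph{elementary move} on such compositions, for instance shifting a single block boundary by one unit. The graph whose vertices are block diagonal matching fields and whose edges are elementary moves is connected, so it suffices to handle one elementary move at a time.

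For a single elementary move $\MB \rightsquigarrow \MB'$, the permutations prescribed by $\MB$ and $\MB'$ differ only on the Pl\"ucker variables whose index set crosses the moved boundary. The resulting pairs of vertices in $\MP_\MB$ and $\MP_{\MB'}$ differ by translation along a common primitive lattice direction $w$ extracted from the cycle structures of the two permutations. I would then write down an explicit combinatorial mutation in the sense of Akhtar-Coates-Galkin-Kasprzyk, with mutation vector $w$ and a factor polytope $F$ equal to a standard simplex encoding the Pl\"ucker indices crossed by the boundary. The width conditions would be verified directly by projecting both vertex sets onto $\RR w$ and checking the requisite containment of $F$-translates at each level set. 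When a single mutation is insufficient to reach $\MP_{\MB'}$ directly, I would route through an auxiliary coherent matching field $\widetilde{\Lambda}$, verifying from its defining permutations that $\widetilde{\Lambda}$ is coherent and that its polytope is indeed $\MP_{\widetilde{\Lambda}}$.

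The main obstacle is keeping every intermediate polytope inside the family of matching field polytopes, since generic combinatorial mutations readily exit this family. I would address this by choosing the mutation data $(w, F)$ small enough and structured enough that the image of each vertex of the starting polytope is manifestly the vertex associated with a permutation of a prescribed coherent matching field; this requires a careful vertex-by-vertex analysis and is where the bulk of the combinatorial work lies. Concatenating the resulting elementary-move mutations then links any two block diagonal matching field polytopes through the base point, proving the theorem.
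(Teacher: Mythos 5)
Your high-level strategy matches the paper's: connect every block diagonal polytope to the Gelfand--Tsetlin base point $\MP_{\MB_0}$, handle one shift of the block boundary ($\MB_{\ell-1} \rightsquigarrow \MB_\ell$) at a time, and route through auxiliary coherent ``intermediate'' matching fields whose polytopes supply the intermediate steps. The paper executes exactly this, introducing the family $\MB_\ell^\lambda$ (Definition~\ref{def:int_mf}) and verifying, via Proposition~\ref{prop:convex} and the supporting lemmas, that each step $\MB_{\ell-1}^{\lambda-1}\to\MB_{\ell-1}^{\lambda}$ is an honest combinatorial mutation.

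One claim in your outline is, however, simply not true and would derail a literal implementation: the vertices that change in a single elementary move $\MB_{\ell-1}\rightsquigarrow\MB_\ell$ do \emph{not} ``differ by translation along a common primitive lattice direction.'' For $\Gr(3,n)$ the affected tuples are of the form $(\ell,q,k)\mapsto(q,\ell,k)$ (and $(p,\ell,k)\leftrightarrow(\ell,p,k)$), and the displacement $e_{1,q}+e_{2,\ell}-e_{1,\ell}-e_{2,q}$ visibly depends on $q$. So there is no single $(w,F)$ effecting the whole move, and what you present as an occasional fallback --- routing through auxiliary matching fields --- is in fact mandatory at every elementary move and requires on the order of $n$ intermediate matching fields, one per value of $\lambda$. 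The paper's intermediate matching fields $\MB_\ell^\lambda$ are engineered precisely so that each mutation moves only the vertices sharing a single fixed second index, making the displacement direction constant at each step. A second, more technical point you do not mention: the matching field polytopes are far from full-dimensional in $\RR^{k\times n}$, and the paper first applies explicit projections $\Pi$ to pass to full-dimensional images in $\RR^{3\times(n-3)}$ (and then lifts back) --- without some such normalization the ``width'' and factor-polytope conditions for a combinatorial mutation are awkward to state and verify. Neither of these issues is fatal to your plan, but the first in particular reflects a misreading of where the difficulty actually lies: not in patching an occasional failure of a single mutation, but in decomposing every elementary move into a long chain of carefully chosen mutations, with the intermediate matching fields needing their own proof of coherence.
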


The matching fields associated to the intermediate polytopes can be thought of as interpolating between the block diagonal matching fields. As a result we obtain a large family of toric degenerations for the Grassmannian given by matching fields.

\medskip
\noindent{\bf Structure of the paper.}
Throughout \S\ref{sec:prelim} we introduce our main objects of study and conclude the section with a proof of Theorem~\ref{thm:toric_deg}. In \S\ref{sec:intro_mutation} we recall the definitions of a combinatorial mutation of a polytope and a mutation of its dual polytope. In \S\ref{sec:matching_fields} we recall the definition of a coherent matching field, block diagonal matching field and their ideals.
In \S\ref{sec:intermediate_matching_fields} we define intermediate matching fields whose polytopes appear in the proof of Theorem~\ref{thm:block_diag_polytope_mutation} and show that those are coherent. In \S\ref{sec:matching_field_polytopes} we define the polytopes associated to matching fields. In \S\ref{sec:intro_toric_degens} we give a proof of Theorem~\ref{thm:toric_deg} and note that the intermediate matching fields give rise to toric degenerations of the Grassmannian.

In \S\ref{sec:mutations} we give a proof of Theorem~\ref{thm:block_diag_polytope_mutation}. We begin in \S\ref{sec:mutations_gr3n} with the proof for the $\Gr(3,n)$ case which we break into three steps. Figures~\ref{fig:overview_mutate_01}, \ref{fig:overview_mutate_12} and \ref{fig:overview_mutate_ell} provide an overview for each step showing the construction of each sequence of combinatorial mutations. In \S\ref{sec:mutations_convexity} we prove some important technical results used throughout the construction of the combinatorial mutations. In \S\ref{sec:mutation_generalise_gkn} we show how the proof of the $\Gr(3,n)$ is generalised to $\Gr(k,n)$ for arbitrary $k$.

\medskip

\noindent{\bf Acknowledgement.} 
OC and FM would like to thank the organizers of the ``Workshop on Commutative Algebra and Lattice Polytopes" at RIMS in Kyoto, where this work began. OC is supported by EPSRC Doctoral Training Partnership (DTP) award EP/N509619/1. 
AH is partially supported by JSPS KAKENHI $\sharp$20K03513. 
FM was partially supported by a BOF Starting Grant of Ghent University and EPSRC Early Career Fellowship EP/R023379/1.

\section{Preliminaries}\label{sec:prelim}

\subsection{Combinatorial mutation}\label{sec:intro_mutation}
We begin by fixing two lattices $N = \ZZ^d$ and its dual $M = {\rm Hom}_\ZZ(N, \ZZ)$. We take $N_\RR = N \otimes_\ZZ \RR$ and similarly $M_\RR = M \otimes_\ZZ \RR$. We fix the standard inner product $\langle \cdot , \cdot \rangle : N_\RR \times M_\RR \rightarrow \RR$ given by evaluation $\langle v,u\rangle:=u(v)$ for $v\in N_\RR$ and $u\in M_\RR$. Let $w$ be a primitive lattice point of $M$ and $F \subset w^{\perp} \subset N_\RR$ be a lattice polytope.

\medskip

In the following, we first recall the  definition of a tropical map from \cite[\S3]{akhtar2012minkowski} which is a piecewise linear map analogous to a tropical cluster mutation.
\begin{definition} The tropical map defined by $w$ and $F$ is given by
\begin{equation*}
\varphi_{w,F}\ \colon\  M_\RR  \rightarrow M_\RR, \;\; u   \mapsto u-u_{\textrm{min}} w. 
\end{equation*}
Let $\MP \subset M_\RR$ be a lattice polytope that contains the origin and suppose that $\varphi_{w,F}(\MP)$ is convex. Then we say that the polytope $\varphi_{w,F}(\MP)$ is a \emph{combinatorial mutation} of $\MP$. \end{definition}
Since $F \subset w^\perp$ we deduce that the normalized lattice volume of $\MP$ and $\varphi_{w,F}(\MP)$ are equal.

\medskip

Given any lattice polytope $\MP \subset M_{\mathbb{R}}$, the dual polyhedron $\MP^\ast\subset N_{\mathbb{R}}$
is defined by
\[
\MP^\ast:=\{u\in  N_{\mathbb{R}}:\ \langle v,u\rangle\geq -1\text{ for all } v\in \MP \}.
\]
One can also define combinatorial mutation of the dual polytope. Note that in order to define the dual polytope, we require that the origin does not lie outside the polytope, i.e.~the origin lies in the interior of the polytope or on its boundary. Suppose $\MP \subset M_\RR$ is a polytope and $\MP^* \subset N_\RR$ is its dual. For every integer $h\in \mathbb{Z}$ we define 
\[
H_{w,h} = \{u \in M_\RR : \langle w, u \rangle = h\}\quad\text{and}\quad w_h(\MP^*) =  \conv(H_{w,h} \cap \MP^* \cap N).
\]
Note that $H_{w,h}$ is the hyperplane orthogonal to $w$ at height $h$.
Assume that for all negative integers $h \in \ZZ_{< 0}$ there exists a lattice polytope $G_h \subset N_\RR$ (with the possibility that $G_h = \emptyset$) such that
\[
H_{w,h} \cap V(\MP^*) \subseteq G_h + |h|F \subseteq w_h(\MP^*).
\] 
In such a case we can define a combinatorial mutation of the dual polytope $\MP^\ast$ as follows.

\begin{definition}
The combinatorial mutation of $\MP^*$ with respect to $w$ and $F$ is
\[
\textrm{mut}_w(\MP^*, F) = \conv\left( 
\bigcup_{h \in \ZZ_{< 0}} G_h \cup 
\bigcup_{h \in \ZZ_{\ge 0}} \Big(w_h(\MP^*) + hF \Big)
\right).
\]
\end{definition}

If the origin lies in the boundary of $\MP$ then $\MP^*$ is an unbounded polyhedron. One can define an analogous notion of combinatorial mutation by realising the polyhedron as a Minkowski sum $\MP^* = C + B$, where $C$ is a cone and $B$ is a polytope, in a canonical way. Then we first apply mutation to the polytope $B$ as described above and to the cone separately, and then take the sum to obtain $\textrm{mut}_w(\MP^*, F)$. See \cite[\S2.3]{higashitani2020two} for more details.

\subsection{Matching fields and their associated ideals}\label{sec:matching_fields}

We first define matching fields and the ideals associated to them. 
Given integers $k$ and $n$, a \emph{matching field} denoted by $\Lambda(k,n)$, or $\Lambda$ when there is no confusion, is a choice of permutation $\Lambda(I) \in S_k$ for each $I \in \mathbf{I}_{k, n} = \{ I \subset [n] : |I| = k \}$, where $[n]=\{1,2,\ldots,n\}$ for a given positive integer $n$. We think of the permutation {$\sigma=\Lambda(I)$} as inducing a new ordering on the elements of $I$, where the position of $i_s$ is  $\sigma(s)$. In addition, we think of $I$ as being identified with a monomial of the Pl\"ucker form $P_I$ and we represent these monomials as a $k \times 1$ tableau where the entry of $(\sigma(r), 1)$ is $i_{r}$. To make this tableau notation precise we define the ideal of the matching field as follows.

Let $X=(x_{i,j})$ be a $k \times n$ matrix of indeterminates. To every $k$-subset $I$ of $[n]$ with $\sigma=\Lambda(I)$ we associate the monomial 
$
\textbf{x}_{\Lambda(I)}:=x_{\sigma(1) i_{1}}x_{\sigma(2)i_2}\cdots x_{{\sigma(k)i_k}}. 
$
The {\em matching field ideal} $J_\Lambda$ is defined as the kernel of the monomial map
\begin{eqnarray}\label{eqn:monomialmap}
\varphi_{\Lambda} \colon\  & \mathbb{K}[P_I]  \rightarrow \mathbb{K}[x_{ij}]  
\quad\text{with}\quad
 P_{I}   \mapsto \text{sgn}(\Lambda(I)) \textbf{x}_{\Lambda(I)},
\end{eqnarray}
where $\text{sgn}(\Lambda(I))$ denotes the signature of the permutation $\Lambda(I)$ for each $I \in \mathbf{I}_{k, n}$. 
\begin{definition}\label{def:matching_field}
A matching field $\Lambda$  is \emph{coherent} if there exists an $k\times n$ matrix $M=(m_{ij})$ 
with 
$m_{ij}\in\mathbb{R}$ 
such that 
for every  $I \in \mathbf{I}_{k,n}$ 
the initial of the Pl\"ucker form  $P_I \in \mathbb{K}[x_{ij}]$ is $\text{in}_M (P_I) = \varphi_{\Lambda}(P_I)$, where $\text{in}_M (P_I)$ is the sum of all terms in $P_I$ of the lowest weight and the weight of a monomial $x_{1i_1}\cdots x_{ki_k}$ is $m_{1i_1}+\cdots+m_{ki_k}$.
In this case, we say that the matrix $M$ \emph{induces the matching field} $\Lambda$. We let $\wb_M$ be the weight vector on the variables $P_I$ induced by the entries $m_{ij}$ of the weight matrix $M$ on the variables $x_{ij}$. More precisely, the weight of each variable $P_I$ is defined as the minimum weight of the terms of the corresponding minor of $M$, and it is called {\em the weight induced by $M$}. 
\end{definition}

\begin{example}\label{ex:diag}
Consider the matching field $\Lambda(3,5)$ which assigns to each subset $I$ the identity permutation. Consider the following matrix:
\[
M=\begin{bmatrix}
     0  & 0  & 0  & 0  & 0  \\
     5  & 4 & 3  & 2  & 1\\
     9 & 7 & 5 & 3  & 1 \\
\end{bmatrix}.
\]
The weights induced by $M$ on the variables $P_{123}, P_{124},\ldots,P_{345}$ are $9, 7, 5, 6, 4, 3, 6, 4,3, 3$, respectively. Thus, for each $I=\{i,j,k\}$ we have that $\text{in}_M (P_I) = x_{1i}x_{2j}x_{3k}$ for $1\leq i<j<k\leq 5$. Therefore, the matrix $M$ induces $\Lambda(3,5)$. 
Below are the tableaux representing $P_I$ for each $I$:
$$\begin{array}{c}1 \\2  \\ 3 \end{array} , \quad
\begin{array}{c} 1 \\2  \\4 \end{array},\quad
\begin{array}{c}1  \\2 \\ 5 \end{array} , \quad
\begin{array}{c}1 \\ 3 \\ 4  \end{array} ,\quad
\begin{array}{c}1 \\ 3  \\ 5\end{array} ,\quad
\begin{array}{c}1 \\ 4 \\ 5\end{array} ,\quad
\begin{array}{c}2 \\ 3\\ 4  \end{array} ,\quad
\begin{array}{c}2 \\ 3\\ 5\end{array} ,\quad
\begin{array}{c} 2 \\ 4   \\ 5 \end{array} ,\quad
\begin{array}{c} 3 \\ 4   \\ 5 \end{array}. 
$$
\end{example}
Notice that each initial term $\text{in}_M(P_I)$ arises from the leading diagonal. Such matching fields are called {\em diagonal}.

\begin{definition}\label{def:block}
Given $k,n$ and $0\leq\ell\leq n$, we define the 
{\em block diagonal matching field}
$\BL$
as the map from $\mathbf{I}_{k,n}$ to $S_k$ such that
\[
 \BL(I)= \left\{
     \begin{array}{@{}l@{\thinspace}l}
      id  &: \text{if $\lvert I|=1$ or $\lvert I \cap [\ell]\rvert \neq 1$},\\
      (12)  &: \text{otherwise}. \\
     \end{array}
   \right.
\]
It is shown in \cite[Example 2.4]{clarke2019toric} that $\BL$ is a coherent matching field. In particular, it is induced by the following matrix:
\[
M_{\ell}=\begin{bmatrix}
    0       & 0         & \cdots    & 0         &  0      & 0      & \cdots     & 0  \\
    \ell    & \ell-1    & \cdots    & 1         &n        &n-1     &\cdots      &\ell+1\\
    2n      & 2(n-1)    & \cdots    & 10        &  8      & 6      &4           & 2  \\
    \vdots  & \vdots    & \ddots    & \vdots    & \vdots  & \vdots &  \vdots    &  \vdots   \\
     n(k-1) & (n-1)(k-1)& \cdots    & 5(k-1)    & 4(k-1)  & 3(k-1) & 2(k-1)     & k-1    \\
\end{bmatrix}.
\]
In order to simplify our notation we use $\wb_\ell$ for $\wb_{M_\ell}$. 

These matching fields were first called $2$-block diagonal in \cite{KristinFatemeh}. Note that $\ell=0$ or $n$ gives rise to the classical \emph{diagonal matching field} as in Example~\ref{ex:diag}.
\end{definition}

\begin{example}\label{exa:b_ell_1}
We consider the matching field $\MB_1$ with $n = 5$ and $k = 3$. We will continue this as a running example through subsequent section. The weight matrix is
\[
M_1 = 
\begin{bmatrix}
 0 & 0 & 0 & 0 & 0 \\
 1 & 5 & 4 & 3 & 2 \\
 10& 8 & 6 & 4 & 2 
\end{bmatrix}.
\]
The weight of the Pl\"ucker forms $P_{123}, P_{124}, P_{125} \dots, P_{345} \in \mathbb K[x_{i,j}]$ is given by $7,5,3,5,3,3,8,6,5,5$. And so, the tableaux representing the initial terms of the Pl\"ucker forms are
\[
\begin{array}{c}2 \\1  \\ 3 \end{array} , \quad
\begin{array}{c} 2 \\1  \\4 \end{array},\quad
\begin{array}{c}2 \\1 \\ 5 \end{array} , \quad
\begin{array}{c}3 \\ 1 \\ 4  \end{array} ,\quad
\begin{array}{c}3 \\ 1  \\ 5\end{array} ,\quad
\begin{array}{c}4 \\ 1 \\ 5\end{array} ,\quad
\begin{array}{c}2 \\ 3\\ 4  \end{array} ,\quad
\begin{array}{c}2 \\ 3\\ 5\end{array} ,\quad
\begin{array}{c} 2 \\ 4   \\ 5 \end{array} ,\quad
\begin{array}{c} 3 \\ 4   \\ 5 \end{array}. 
\]
We see that the tableaux above can be obtained from the diagonal tableaux, see Example~\ref{ex:diag}, by swapping the top two rows if the first row entry is $1$. Therefore the matching field $\MB_1$ is defined by $\MB_1(I) = (12)$ the transposition of $1$ and $2$ if $1 \in I$, otherwise $\MB_1(I) = id$ the identity permutation. The matching field ideal $J_{\MB_1}$ is a toric ideal (a prime binomial ideal) which is generated as follows
\begin{align*}
J_{\MB_1} = \langle 
& P_{135}P_{245} - P_{125}P_{345}, 
P_{134}P_{245} - P_{124}P_{345}, 
P_{135}P_{234} - P_{134}P_{235}, \\
& P_{125}P_{234} - P_{124}P_{235}, 
P_{125}P_{134} - P_{124}P_{135}
\rangle
\end{align*}
\end{example}

\subsection{Intermediate matching fields}\label{sec:intermediate_matching_fields}

\begin{definition}\label{def:int_mf}
For Grassmannian $\Gr(k,n)$ with $2 \le k \le n$, we define the matching field $\MB^{\lambda}_\ell$ for each $\ell \in \{0, \dots, n-k+1 \}$ and $\lambda \in \{\ell+2, \dots, n+\ell-1\}\setminus\{n\}$ as follows:  
Let $I=\{p,q,r_1,\ldots,r_{k-2}\}$ with $1 \leq p<q<r_1<\cdots<r_{k-2} \le n$. When $\lambda < n$, we set 
\begin{align*}
 \MB^\lambda_\ell(I)= \left\{
     \begin{array}{@{}l@{\thinspace}l}
      id  &: \text{if $q \le \ell$ or $p=\ell+1<\lambda<q$ or $\ell+1<p$},\\
      (12)  &: \text{otherwise}. \\
     \end{array}
   \right.
\end{align*}
When $\lambda > n$, we set  
\begin{align*}
 \MB^\lambda_\ell(I)= \left\{
     \begin{array}{@{}l@{\thinspace}l}
      id  &: \text{if $q \le \ell$ or $p \leq \lambda'<q=\ell+1$ or $\ell+1<p$},\\
      (12)  &: \text{otherwise}, \\
     \end{array}
   \right.
\end{align*}
where $\lambda'=\lambda-n$. 
\end{definition}
Intermediate matching fields generalise block diagonal matching fields. In particular,  \[
\MB^{n+\ell-1}_\ell = \MB_{\ell+1}\ \text{ for each }\ \ell \in \{0, \dots, n-k+1 \}.
\]
We proceed by showing that the intermediate matching fields are coherent.

\medskip

Given $n,k,\ell,\lambda$ as in Definition~\ref{def:int_mf}, we define $N = n+1$. For $\lambda<n$, let \begin{center}
\resizebox{\textwidth}{!}{
$M_\ell^\lambda = 
\begin{bmatrix}
0      & 0        & \cdots & 0 & 0                  & 0 & 0   & \cdots & 0                        & 0                      & \cdots & 0    \\
\ell & \ell - 1 & \cdots & 1 & n - \lambda + \ell+1 & n & n-1 & \cdots & n - \lambda + \ell + 2 & n - \lambda + \ell & \cdots & \ell+1 \\
Nn     & N(n-1)    & \cdots & N(n - \ell+2) & N(n-\ell+1) & N(n-\ell) & N(n-\ell-1) & \cdots & 
N(n-\lambda+1) & N(n-\lambda) & \cdots & N \\
N^2n   & N^2(n-1)   & \cdots & N^2(n-\ell+1) & N^2(n-\ell+1)   & N^2(n-\ell) & N^2(n-\ell-1) & \cdots & N^2(n-\lambda+1) & N^2(n-\lambda) & \cdots & N^2 \\
\vdots & \vdots &  & \vdots & \vdots   &  & \vdots & \vdots & \vdots & \vdots & & \vdots \\
N^{k-2}n   & N^{k-2}(n-1)   & \cdots & N^{k-2}(n-\ell+1) & N^{k-2}(n-\ell+1)   & N^{k-2}(n-\ell) & N^{k-2}(n-\ell-1) & \cdots & N^{k-2}(n-\lambda+1) & N^{k-2}(n-\lambda) & \cdots & N^{k-2} 
\end{bmatrix}$. 
}
\end{center}
Note that $n-\lambda+\ell+1$ (resp. $n-\lambda+\ell+2$) of the second row is in the $(\ell+1)$-th column (resp. the $\lambda$-column). 
Similarly, when $\lambda > n$, let 
\begin{center}
\resizebox{\textwidth}{!}{
$M_\ell^\lambda = 
\begin{bmatrix}
0    & 0        & \cdots & 0              & 0              & \cdots & 0 & 0            & 0 & 0   & \cdots & 0 \\
\ell+1 & \ell  & \cdots & \ell-\lambda'+2 & \ell-\lambda' & \cdots & 1 & \ell-\lambda'+1 & n & n-1 & \cdots & \ell + 2 \\
Nn   & N(n-1)   & \cdots & N(n-\lambda'+1) & N(n-\lambda')   & \cdots & N(n-\ell+1) & N(n-\ell) & N(n-\ell-1) & N(n-\ell-2) & \cdots & N \\
N^2n   & N^2(n-1)   & \cdots & N^2(n-\lambda'+1) & N^2(n-\lambda')   & \cdots & N^2(n-\ell+1) & N^2(n-\ell) & N^2(n-\ell-1) & N^2(n-\ell-2) & \cdots & N^2 \\
\vdots & \vdots &  & \vdots & \vdots   &  & \vdots & \vdots & \vdots & \vdots & & \vdots \\
N^{k-2}n   & N^{k-2}(n-1)   & \cdots & N^{k-2}(n-\lambda'+1) & N^{k-2}(n-\lambda')   & \cdots & N^{k-2}(n-\ell+1) & N^{k-2}(n-\ell) & N^{k-2}(n-\ell-1) & N^{k-2}(n-\ell-2) & \cdots & N^{k-2} 
\end{bmatrix}$,
}
\end{center}
where $\lambda'=n-\lambda$. Note that $\ell-\lambda'+2$ (resp. $\ell-\lambda'+1$) of the second row is in the $\lambda'$-th column (resp. the $(\ell+1)$-th column). 

\begin{proposition}
The matrix $M_\ell^\lambda$ induces the matching field $\MB_\ell^\lambda$. In particular $\MB_\ell^\lambda$ is a coherent matching field.
\end{proposition}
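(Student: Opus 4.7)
The plan is to verify directly that for each $k$-subset $I = \{p, q, r_1, \ldots, r_{k-2}\}$ with $p < q < r_1 < \cdots < r_{k-2}$, the unique lowest-weight term of the Pl\"ucker form $P_I$ under $M_\ell^\lambda$ is the monomial prescribed by $\MB_\ell^\lambda(I)$. The argument splits naturally into a reduction step and a region-by-region check.

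\textbf{Reduction to row $2$.} A direct inspection of $M_\ell^\lambda$ shows that for $r \ge 3$ the entry in row $r$ and column $j$ equals $N^{r-2}(n+1-j)$ with $N = n+1$, uniformly in both the $\lambda < n$ and the $\lambda > n$ cases. For any permutation $\sigma \in S_k$, the total contribution of rows $2, \ldots, r-1$ to the weight is bounded above by $n \sum_{s=0}^{r-3} N^s = N^{r-2} - 1$, which is strictly less than the minimum nonzero gap $N^{r-2}$ between two entries of row $r$. A greedy argument applied top-down from row $k$ therefore forces $\sigma(r) = r$ for all $r \ge 3$. Since row $1$ is identically zero, the remaining choice between the identity and the transposition $(1\,2)$ reduces to comparing the two second-row entries $v(p)$ and $v(q)$, where $v(j) := M_\ell^\lambda[2, j]$.

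\textbf{Case check.} Reading off row $2$ of $M_\ell^\lambda$ yields, for $\lambda < n$, the piecewise formula
\begin{equation*}
v(j) =
\begin{cases}
\ell + 1 - j & \text{if } j \in [1, \ell], \\
n - \lambda + \ell + 1 & \text{if } j = \ell+1, \\
n + \ell + 2 - j & \text{if } j \in [\ell+2, \lambda], \\
n + \ell + 1 - j & \text{if } j \in [\lambda+1, n],
\end{cases}
\end{equation*}
and an analogous four-branch formula for $\lambda > n$ using $\lambda' = \lambda - n \in [1, \ell-1]$. A direct count confirms that $v$ is a bijection $[n] \to [n]$, so all comparisons are strict and the initial term is a single monomial; coherency thus holds term by term. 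For each region specified in Definition~\ref{def:int_mf} I would substitute the explicit values of $v(p)$ and $v(q)$ and verify the required inequality. For example, when $p = \ell + 1 < \lambda < q$ one has $v(p) = n-\lambda+\ell+1$ and $v(q) = n+\ell+1-q \le n-\lambda+\ell < v(p)$, so the identity term wins, as required; when $p \le \ell$ and $q \ge \ell+2$ one obtains $v(p) \le \ell < \ell+1 \le v(q)$, selecting the transposition. The remaining regions follow in the same spirit.

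The hard part is not any individual inequality but organising the bookkeeping cleanly, particularly for $\lambda > n$, where row $2$ breaks at the interior column $\lambda'$ inside $[1, \ell]$ in addition to column $\ell+1$. Once it has been verified that the lowest-weight term of $P_I$ is exactly $\varphi_{\MB_\ell^\lambda}(P_I)$ for every $I$, the ``in particular'' assertion that $\MB_\ell^\lambda$ is coherent is immediate from Definition~\ref{def:matching_field}.
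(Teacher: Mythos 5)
Your proof is correct and follows essentially the same two-step strategy as the paper: use the fact that the gap in row $r$ ($N^{r-2}$) dominates the total contribution of all rows above it to force $\sigma(r)=r$ for $r\ge 3$, then reduce to comparing the two second-row entries $m_{2,p}$ and $m_{2,q}$. The paper phrases the reduction as an induction on $k$ and you phrase it as a greedy top-down pass, but these are the same estimate; your explicit piecewise formula for $v(j)$ just makes the region check more concrete than the paper's appeal to the definition of $\MB_\ell^\lambda$.
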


\begin{proof}
Let $M_\ell^\lambda=(m_{i,j})_{1 \leq i \le k, 1 \leq j \leq n}$ be the matrix defined above. We begin by showing that $M_\ell^\lambda$ induces a coherent matching field, i.e.~for each $I \in \mathbf{I}_{k, n}$ the minimal weight  induced by $M_\ell^\lambda$ is uniquely determined. 

We proceed by induction on $k$. The case $k=2$ is trivial since the entries in the second row are distinct. 
In the case $k \ge 3$, take $I=\{r_1,\ldots,r_k\} \in \mathbf{I}_{k,n}$ with $1 \le r_1<\cdots<r_k \le n$. 
Let $\alpha \in S_k$ be a permutation such that $\{m_{i,r_{\alpha(i)}} : 1 \le i \le k\}$ attains the minimal weight induced by $M_\ell^\lambda$, i.e.
\[
\textbf{w}(I) = 
\sum_{i = 1}^k m_{i,r_{\alpha(i)}} = 
\min\left\{
\sum_{i = 1}^k m_{i,r_{\beta(i)}} : \beta \in S_k
\right\}.
\]
We prove that $\alpha(k) = k$ as follows. Let $\beta \in S_k$ be any permutation. Since $m_{1,j}=m_{1,j'}=0$ and $|m_{i,j}-m_{i,j'}| < N^{i-2}n$ for each $2 \le i \le k-1$ and $1 \le j,j' \le n$, we have 
\[
\left|
\sum_{i=1}^{k-1}m_{i,r_{\alpha(i)}}-
\sum_{i=1}^{k-1}m_{i,r_{\beta(i)}}
\right| \le 
\sum_{i=1}^{k-1}
|m_{i,r_{\alpha(i)}}-m_{i,r_{\beta(i)}}| <
\sum_{i=2}^{k-1} N^{i-2}n <
N^{k-2}.
\]
Since $|m_{k,j}-m_{k,j'}| \ge N^{k-2}$ for any $1 \le j < j' \le n$,
we must have $m_{k,r_{\alpha(k)}}$ as small as possible. Since the entries of row $k$ are strictly decreasing, we have $\alpha(k)=k$ and is the unique value which minimises $w(I)$. Since the entries of row $k$ are distinct, by the  unique possibility. Hence, by the induction hypothesis, we conclude the first part of the proof.

Let $\Lambda_\ell^\lambda$ be the matching field induced by $M_\ell^\lambda$. Let $I = \{ p, q, r_1, \dots, r_{k-2}\} \in \mathbf{I}_{k,n}$ and $\sigma = \Lambda_\ell^\lambda(I)$  the permutation given by the matching field. By the above we have that $\sigma(i) = i$ for all $3 \le i \le k$. So we have that
$\sigma=id \text{ or }(12)$.
Therefore $\textbf{w}(I)$ is either 
$m_{1,p}+m_{2,q}+\sum_{i=3}^km_{i,r_{i-2}}$ (i.e.~$\sigma=id$) or $m_{1,q}+m_{2,p}+\sum_{i=3}^km_{i,r_{i-2}}$ (i.e.~$\sigma=(12)$). 
Since $m_{1,p}=m_{1,q}=0$, the permutation $\sigma$ is determined by whether $m_{2,q}<m_{2,p}$ or $m_{2,q}>m_{2,p}$. More precisely
\begin{align*}
\sigma=id \; \Longleftrightarrow \; m_{2,p}>m_{2,q} \;\text{ and }\; \sigma=(12) \; \Longleftrightarrow \; m_{2,p}<m_{2,q}. 
\end{align*}
Therefore, by the definition of $\MB_\ell^\lambda$ and the second row of $M_\ell^\lambda$, we conclude that $\MB_\ell^\lambda=\Lambda_\ell^\lambda$. 
\end{proof}

\color{black}
\subsection{Matching field polytopes}\label{sec:matching_field_polytopes}

Given a matching field $\Lambda$, we associate to it a polytope $\MP_{\Lambda}$. The vertices of the polytope are in one-to-one correspondence with the tableau of the matching field. In fact, reading the vertices of the polytope uniquely defines the matching field.

\begin{definition}\label{def:matching_field_polytope}
Fix $k$ and $n$. We take $\RR^{k \times n}$ to be the vector space of $k \times n$ matrices with canonical basis $\{ e_{i,j} : 1 \le i \le k, 1 \le j \le n\}$ where $e_{i,j}$ is the matrix with a $1$ in row $i$ and column $j$ and zeros everywhere else. Given a matching field $\Lambda$, for each $I=\{i_1,\ldots,i_k\} \in\mathbf{I}_{k,n}$ with $1 \le i_1< \cdots < i_k \le n$ we set $v_{I,\Lambda}:=\sum_{j=1}^ke_{j,i_{\Lambda(I)(j)}}$.
Then the \emph{matching field polytope} is
\[
\MP_{\Lambda} = \conv\left\{
v_{I,\Lambda}:\ I\in \mathbf{I}_{k,n}
\right\}.
\]
For notation we often write the tuple $(i_{\Lambda(I)(1)}, i_{\Lambda(I)(2)}, \dots, i_{\Lambda(I)(n)})$ for the vector $v_{I, \Lambda}$.
\end{definition}

\begin{example}
Let $k = 3$, $n = 5$ and $\Lambda$ be a matching field. Suppose $(3,1,5)^T$ is a tableau of $\Lambda$ then its corresponding vertex in $\MP_{\Lambda}$ is
\[
(3,1,5) = 
\begin{bmatrix}
 0 & 0 & 1 & 0 & 0 \\
 1 & 0 & 0 & 0 & 0 \\
 0 & 0 & 0 & 0 & 1
\end{bmatrix} 
\in \RR^{3 \times 5}.
\]
\end{example}

\begin{example}
Let $k = 3$, $\ell \in \{0, \dots, n-2 \}$ and $\lambda \in \{\ell+2, \dots, n+\ell-1\} \backslash \{n\}$. We define $\lambda' = n - \lambda$. In the table below, we write down the vertices of the matching field polytopes for all intermediate matching fields $\MB_\ell^\lambda$.

\begin{center}
\begin{tabular}{lllll}
\toprule
    \multicolumn{2}{l}{$\lambda < n$} & &
    \multicolumn{2}{l}{$\lambda > n$} \\
    $v \in \MP_{\MB_\ell^\lambda}$ & conditions & &
    $v \in \MP_{\MB_\ell^\lambda}$ & conditions\\
\midrule
    $(p,q,r)$ & $1 \le p < q < r \le n$ and $q \le \ell$ &&
    $(i,j,k)$ & $1 \le p < q < r \le n$ and $q \le \ell$ \\
    $(q,p,r)$ & $1 \le p \le \ell < q < r \le n$ &&
    $(p,\ell+1,r)$ & $1 \le p \le \lambda' < \ell+1 <r \leq n$ \\
    $(q,\ell+1,r)$ & $\ell+1<q<r \le n$ and $q \le \lambda$ &&
    $(\ell+1,p,r)$ & $p<\ell+1<r \leq n$ \\
    $(\ell+1,q,r)$ & $\lambda < q < r \le n$ &&
    $(q,p,r)$ & $1 \leq p \leq \ell+1 < q < r \le n$\\
    $(p,q,r)$ & $\ell+1 < p < q < r \le n$ &&
    $(p,q,r)$ & $\ell+1 < p < q < r \le n$\\
\bottomrule
\end{tabular}
\end{center}
\end{example}

For certain matching fields, these polytopes are in fact the toric polytopes associated to toric degenerations of the Grassmannian.

\subsection{Toric degenerations of \texorpdfstring{$\Gr(k,n)$}{Gr(k,n)} }\label{sec:intro_toric_degens}
Let $X = (x_{i,j})$ be a generic $k\times n$ matrix of indeterminates. The defining ideal of the Grassmannian, embedded into $\binom{n}{k}-1$ dimensional projective space via the Pl\"ucker embedding, is the kernel of the polynomial map
\begin{eqnarray}\label{eqn:pluckermap}
\psi \colon\  & \mathbb{K}[P_I]  \rightarrow \mathbb{K}[x_{ij}]  
\quad\text{with}\quad
 P_{I}   \mapsto [I]_X,
\end{eqnarray}
where $[I]_X$ is the determinant of the submatrix of $X = (x_{i,j})$ whose columns are given by $I$. We will denote this ideal as $G_{k,n} = \ker(\psi)$. For each $\alpha=(\alpha_J)_{J}$ in $\mathbb{Z}_{\geq 0}^{\binom{n}{k}}$ we fix the notation ${\bf P}^{{\bf \alpha}}$ denoting the monomial $\prod_{J}P_J^{\alpha_J}$.

\begin{definition}\label{def:initial}
Given a weight vector $\wb$, we denote 
the initial ideal 
of $G_{k,n}$ with respect to $\wb$ by $\init_\wb(G_{k,n})$ and we define it as the ideal generated by polynomials $\init_\wb(f)$ for all $f\in G_{k,n}$, where 
\vspace{-1mm}
\[\init_\wb(f)=\sum_{\alpha_j\cdot \wb=d}{c_{{\bf \alpha}_j}\bf P}^{{\bf \alpha}_j}\quad\text{for}\quad f=\sum_{i=1}^t c_{{\bf \alpha}_i}{\bf P}^{{\bf \alpha}_i}\quad\text{and}\quad d=\min\{\alpha_i\cdot \wb:\ i=1,\ldots,t\}.\]
\end{definition}
The Gr\"obner degeneration of $G_{k,n}$ with respect to $\wb$ is called {\em toric} if the initial ideal $\init_{\wb}(G_{k,n})$ is prime and binomial.

\medskip

The diagonal matching field gives rise to a very well-known toric degeneration of the Grassmannian, often called the Gelfand-Tsetlin degeneration. We state this well-studied fact as follows.

\begin{theorem}[Theorem 4.3 and Corollary 4.7 from \cite{clarke2019toric}]
The diagonal matching field $\MB_0$ gives rise to a toric Gr\"obner degeneration of $\Gr(k,n)$. In other words, $\init_{\wb_0}(G_{k,n}) = J_{\MB_0}$ 
is a toric ideal.
\end{theorem}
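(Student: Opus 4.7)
My plan is to prove the two containments separately and then close the gap with a dimension/Hilbert-function argument.

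First, I would establish the easy containment $J_{\MB_0} \subseteq \init_{\wb_0}(G_{k,n})$. The Plücker ideal $G_{k,n}$ is generated by the (two- and three-term) Plücker relations, which are quadratic in the $P_I$. For each such relation $f \in G_{k,n}$, I would compute $\init_{\wb_0}(f)$ with respect to the weight $\wb_0$ induced by $M_0$. Because the diagonal matching field $\MB_0$ is coherent and each Plücker form $\psi(P_I) = [I]_X$ has its diagonal term as the unique minimum-weight monomial, applying $\psi$ followed by taking the initial form of a Plücker relation yields a binomial of the form $\mathbf{x}_{\MB_0(I)}\mathbf{x}_{\MB_0(J)} - \mathbf{x}_{\MB_0(I')}\mathbf{x}_{\MB_0(J')}$, which lies in $\ker(\varphi_{\MB_0}) = J_{\MB_0}$. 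A standard check that these binomials generate $J_{\MB_0}$ gives the containment.

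Second, I would establish $\init_{\wb_0}(G_{k,n}) \subseteq J_{\MB_0}$ by a dimension/primality argument. By definition $J_{\MB_0}$ is the kernel of a monomial map, hence is a prime binomial ideal. So it suffices to prove that $V(\init_{\wb_0}(G_{k,n}))$ and $V(J_{\MB_0})$ have the same dimension: a prime ideal containing a radical ideal of the same Krull dimension must equal its minimal primes, and with the first-step containment this forces equality. Flatness of Gröbner degeneration gives $\dim \mathbb{K}[P_I]/\init_{\wb_0}(G_{k,n}) = \dim \mathbb{K}[P_I]/G_{k,n} = k(n-k)+1$. On the toric side, $\dim \mathbb{K}[P_I]/J_{\MB_0} = \dim \MP_{\MB_0} + 1$, and one can verify via the explicit vertex description of $\MP_{\MB_0}$ that it is lattice-equivalent to the Gelfand-Tsetlin polytope of $\Gr(k,n)$, hence has dimension $k(n-k)$.

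The main technical obstacle I expect is the second step, namely pinning down $\dim \MP_{\MB_0}$ (equivalently, identifying $\MP_{\MB_0}$ with the Gelfand-Tsetlin polytope up to a unimodular transformation). An alternative route that bypasses this polyhedral calculation is a SAGBI-basis argument: one shows directly that the Plücker forms $\{[I]_X : I \in \mathbf{I}_{k,n}\}$ form a SAGBI basis of the Plücker algebra $\mathbb{K}[[I]_X : I \in \mathbf{I}_{k,n}]$ with respect to the diagonal term order induced by $M_0$, which immediately produces a flat degeneration of $\Gr(k,n)$ to $\mathrm{Spec}\,\mathbb{K}[P_I]/J_{\MB_0}$ and hence the equality $\init_{\wb_0}(G_{k,n}) = J_{\MB_0}$.
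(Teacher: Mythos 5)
This statement is not proved in the paper at all; it is quoted as Theorem~4.3 and Corollary~4.7 of \cite{clarke2019toric} and used as known input (the paper only later proves the more general Theorem~\ref{thm:toric_deg}, by combining Proposition~\ref{thm:init_subset_of_matching_ideal} with a degree/volume argument). Judged on its own merits, your outline has the difficulty of the two containments reversed, and the first step hides the entire content of the theorem. The containment $\init_{\wb_0}(G_{k,n}) \subseteq J_{\MB_0}$ is the easy one: it is Proposition~\ref{thm:init_subset_of_matching_ideal}, which follows for any coherent matching field from \cite[Lemma~11.3]{sturmfels1996grobner}. The hard containment is $J_{\MB_0} \subseteq \init_{\wb_0}(G_{k,n})$, which you label ``easy.'' Your argument for it reduces to the claim that the initial forms of the quadratic Pl\"ucker relations generate $J_{\MB_0}$; calling this ``a standard check'' is where the gap is. That claim is equivalent to the assertion that the Pl\"ucker forms are a SAGBI basis for the diagonal weight --- it is the straightening law for the Grassmannian, not a routine verification. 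Once it is granted, Proposition~\ref{thm:init_subset_of_matching_ideal} already gives equality of the two ideals with no further work, so your second paragraph is redundant (though, as a fallback, the dimension step is logically sound: $J_{\MB_0} \subseteq \init_{\wb_0}(G_{k,n})$, $J_{\MB_0}$ prime, and equal Krull dimensions force $\sqrt{\init_{\wb_0}(G_{k,n})} = J_{\MB_0}$, and then the chain $J_{\MB_0} \subseteq \init_{\wb_0}(G_{k,n}) \subseteq \sqrt{\init_{\wb_0}(G_{k,n})} = J_{\MB_0}$ closes).

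Your third paragraph is the right approach: the SAGBI/Khovanskii-basis statement for the diagonal weight is exactly what the cited references establish, and it subsumes your first two steps at once. I would also flag that the claim that $\MP_{\MB_0}$ is lattice-equivalent to the Gelfand--Tsetlin polytope, which you invoke in paragraph two to compute a dimension, is itself a theorem that in many treatments is proved using the GT degeneration, so you would have to be careful to give it an independent combinatorial proof (e.g., via an explicit unimodular transformation or by matching Ehrhart series with the Weyl dimension formula) to avoid circularity.
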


\begin{example}[Continuation of Example~\ref{exa:b_ell_1}]
For Grassmannian $\Gr(3,5)$ we have that the Pl\"ucker ideal $G_{3,5}$ is generated as follows
\begin{align*}
G_{3,5} = \langle &
P_{145}P_{235} \underline{- P_{135}P_{245} + P_{125}P_{345}}, 
P_{145}P_{234} \underline{- P_{134}P_{245} + P_{124}P_{345}}, \\ &
\underline{P_{135}P_{234} - P_{134}P_{235}} + P_{123}P_{345}, 
\underline{P_{125}P_{234} - P_{124}P_{235}} + P_{123}P_{245}, \\ &
\underline{P_{125}P_{134} - P_{124}P_{135}} + P_{123}P_{145}
\rangle
\end{align*}
In fact the above generating set is a Gr\"obner basis for $G_{3,5}$ with respect to the weight vector $\textbf{w} = (7,5,3,5,3,3,8,6,5,5)$ induced by the weight matrix $M_1$. The initial terms of the generators with respect to this weight vector are underlined. In particular we see that $\init_{\textbf{w}}(G_{3,5}) = J_{\MB_1}$
\end{example}

\begin{corollary}\label{cor:GT_vol_degree}
For each $k$ and $n$, the volume of the diagonal matching field polytope $\MP_0$ is the degree of the Grassmannian $\Gr(k,n)$.
\end{corollary}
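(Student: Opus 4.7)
The plan is to chain together three standard facts: Gröbner degenerations preserve degree, the matching field polytope is the toric polytope of the initial ideal, and the degree of a projectively embedded toric variety equals the normalized volume of its polytope.

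First I would invoke the theorem immediately preceding the corollary, which gives $\init_{\wb_0}(G_{k,n}) = J_{\MB_0}$. Since Gröbner degeneration produces a flat family over $\mathbb{A}^1$ whose general fiber is $\Gr(k,n)$ (embedded by Plücker) and whose special fiber is the projective variety $V(J_{\MB_0})$, flatness implies that Hilbert polynomials are constant in the family, and in particular the degrees of the general and special fibers agree. Therefore $\deg \Gr(k,n) = \deg V(J_{\MB_0})$.

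Next I would identify $V(J_{\MB_0})$ as the projective toric variety associated to the polytope $\MP_0 = \MP_{\MB_0}$. By construction, $J_{\MB_0}$ is the kernel of the monomial map $P_I \mapsto \mathbf{x}_{\MB_0(I)}$, and the exponent vectors of these monomials are precisely the vertices $v_{I, \MB_0}$ of $\MP_0$ given in Definition~\ref{def:matching_field_polytope}. Hence $V(J_{\MB_0})$ is the projective toric variety whose lattice point configuration is the vertex set of $\MP_0$ (all of which lie on a common hyperplane, since each vertex has coordinate sum $k$, giving a well-defined projective embedding).

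Finally, I would apply the classical theorem (see e.g.~Sturmfels' \emph{Gröbner bases and convex polytopes}) that the degree of a projective toric variety associated to a lattice polytope equals the normalized lattice volume of the polytope. Combining the three steps yields $\deg \Gr(k,n) = \mathrm{vol}(\MP_0)$. The proof is essentially a chain of citations rather than genuine work; the only thing to be slightly careful about is that $\MP_0$ indeed has enough vertices to cut out the right toric variety (as opposed to a projection of it), but this is immediate from the fact that the monomial map defining $J_{\MB_0}$ sends each Plücker variable to a distinct squarefree monomial, so the vertex set contains one lattice point for each Plücker coordinate and these are in bijection with the generators of the embedding.
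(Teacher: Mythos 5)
Your argument is correct and is precisely the standard chain of facts that the paper leaves implicit when stating this as a corollary of the preceding theorem: flatness of the Gröbner degeneration preserves degree, $J_{\MB_0}$ is the toric ideal of the point configuration $\{v_{I,\MB_0}\}$ (which is the vertex set of $\MP_0$, lying on the hyperplane of coordinate-sum $k$), and the degree of a projective toric variety is the normalized volume of its polytope. This matches the reasoning the authors themselves invoke when they reuse the same three facts in the proof of Theorem~\ref{thm:toric_deg}.
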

This work is motivated by the following question from \cite{KristinFatemeh, clarke2019toric}.
\medskip

\noindent\textbf{Question.}
Which matching fields for $\Gr(k,n)$ give rise to toric degenerations?

\smallskip
This question has been studied in \cite{clarke2019toric} for the block diagonal matching fields,
and it is shown that they give rise to toric degenerations of $\Gr(k,n)$. Here, we study the polytopes of these matching fields and their combinatorial mutations. As corollaries, we give more unified and conceptual proofs of these results.

\medskip

Consider the following result about the structure of initial ideals of the Grassmannian. 

\begin{proposition}\label{thm:init_subset_of_matching_ideal}
Let $M$ be a weight matrix which induces a matching field $\Lambda$. Then $$\init_{\wb_M}(G_{k,n}) \subseteq J_\Lambda.$$
\end{proposition}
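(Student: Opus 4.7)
The plan is to show, for every $f \in G_{k,n}$, that $\init_{\wb_M}(f)$ lies in $J_\Lambda = \ker(\varphi_\Lambda)$; since these elements generate $\init_{\wb_M}(G_{k,n})$ and $J_\Lambda$ is an ideal, the inclusion follows at once. The bridge between the weight $\wb_M$ on the $P_I$-variables and the weight $M$ on the $x_{ij}$-variables is the Pl\"ucker map $\psi$, and the whole argument is a weight-bookkeeping exercise on the identity $\psi(f) = 0$.

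The first step is to understand $\psi(P^{\alpha})$ for a single monomial $P^\alpha = \prod_J P_J^{\alpha_J}$. Each factor $\psi(P_J) = [J]_X$ is a sum of monomials in the $x_{ij}$, and by definition of $\wb_M$ the minimum $M$-weight over these monomials equals $\wb_M(P_J)$. Expanding the product, every monomial appearing in $\psi(P^\alpha)$ has $M$-weight at least $\sum_J \alpha_J \wb_M(P_J) = \alpha \cdot \wb_M$. Because $M$ \emph{induces} $\Lambda$, each $[J]_X$ has a \emph{unique} $M$-minimum term, namely $\mathrm{sgn}(\Lambda(J))\mathbf{x}_{\Lambda(J)} = \varphi_\Lambda(P_J)$. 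Consequently the $M$-minimum term of $\psi(P^\alpha)$ is also unique and equals
\[
\init_M\bigl(\psi(P^\alpha)\bigr) \;=\; \prod_J \bigl(\mathrm{sgn}(\Lambda(J))\mathbf{x}_{\Lambda(J)}\bigr)^{\alpha_J} \;=\; \varphi_\Lambda(P^\alpha),
\]
a single monomial (up to sign) of $M$-weight exactly $\alpha \cdot \wb_M$.

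Next I apply this to $f = \sum_i c_{\alpha_i} P^{\alpha_i} \in G_{k,n}$. Set $d = \min_i \alpha_i \cdot \wb_M$, so that by definition $\init_{\wb_M}(f) = \sum_{i:\,\alpha_i\cdot \wb_M = d} c_{\alpha_i} P^{\alpha_i}$. Applying $\psi$ and grouping by $M$-weight, the identity $\psi(f)=0$ forces the sum of the $M$-minimum-weight contributions across all $i$ to vanish. Only the indices $i$ with $\alpha_i \cdot \wb_M = d$ contribute a monomial of $M$-weight $d$ (the others contribute monomials of strictly larger weight), and for these indices the minimum-weight contribution is precisely $c_{\alpha_i}\varphi_\Lambda(P^{\alpha_i})$ by the previous paragraph. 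Therefore
\[
\varphi_\Lambda\bigl(\init_{\wb_M}(f)\bigr) \;=\; \sum_{i:\,\alpha_i\cdot \wb_M = d} c_{\alpha_i}\varphi_\Lambda(P^{\alpha_i}) \;=\; 0,
\]
i.e.\ $\init_{\wb_M}(f) \in J_\Lambda$.

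The main subtlety to be careful about is the uniqueness of the minimum-weight term in $\psi(P^\alpha)$: we need that no cancellation of monomials of weight $> \alpha_i\cdot \wb_M$ in $\psi(P^{\alpha_i})$ can produce an extra monomial of weight $d$, and that monomials of weight exactly $d$ from different $P^{\alpha_i}$ combine cleanly into $\varphi_\Lambda(\init_{\wb_M}(f))$. Both hold because $\varphi_\Lambda(P^{\alpha_i})$ is a single signed monomial and because weights under $\wb_M$ add over products in the same way weights under $M$ do. This observation is the crux of the argument; the remaining verification is formal.
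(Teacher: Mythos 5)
Your proof is correct. The paper does not write out an argument for this proposition; it simply cites Lemma~11.3 of Sturmfels' \emph{Gr\"obner Bases and Convex Polytopes}, which is a general statement of the form: if a weight $w$ on the target variables makes each $\psi(P_J)$ have a unique $w$-minimal term, and $\wb_M$ is the induced weight on the source variables, then $\init_{\wb_M}(\ker\psi) \subseteq \ker(\varphi)$ where $\varphi$ is the monomial map sending each $P_J$ to that minimal term. What you have written is precisely a self-contained proof of the special case of that lemma that is needed here, and the reasoning is sound at every step.

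Two small remarks on the write-up, neither of which is a gap. First, in the key claim about $\psi(P^\alpha)$ you should be slightly more explicit that no \emph{other} monomial of weight $\alpha\cdot\wb_M$ can appear in the expanded product: any monomial arising from choices $c_J$ in each factor $[J]_X^{\alpha_J}$ has weight $\sum\mathrm{wt}(c_J)\geq\sum\alpha_J\wb_M(P_J)$, with equality only when every $c_J$ is the (unique) minimal term of $[J]_X$; uniqueness of the minimal term in each Pl\"ucker form is exactly what coherence of $\Lambda$ under $M$ supplies. You gesture at this in your final paragraph, but it is the load-bearing point and deserves to be stated as a lemma rather than a closing caveat. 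Second, the sentence claiming that $\varphi_\Lambda$ ``is a single signed monomial'' is correct but a cleaner phrasing is that $\varphi_\Lambda$ is a monomial map up to sign, hence $\varphi_\Lambda(P^{\alpha_i})$ is $\pm$ a monomial; the signs are consistent with those in the paper's definition \eqref{eqn:monomialmap}, so the bookkeeping of signs goes through. With these minor polishings your argument is a complete and rigorous replacement for the paper's citation.
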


This result follows immediately from \cite[Lemma 11.3]{sturmfels1996grobner}. We now give a proof of Theorem~\ref{thm:toric_deg} which is our main tool for finding toric degenerations.

\begin{proof}[Proof of Theorem~\ref{thm:toric_deg}]
In order to show that the initial ideal of the Grassmannian is exactly the matching field ideal, we need to show the reverse inclusion to Proposition~\ref{thm:init_subset_of_matching_ideal}. To do this we consider the primary decomposition of \[
\init_{\wb_M}(G_{k,n}) = J_\Lambda \cap Q,
\]
where $Q$ is the intersection of the other components. Let us consider the degree of the varieties associated to each component. By flatness of the initial degeneration, the variety $V(\init_{\wb_M}(G_{k,n}))$ has the same degree as the Grassmannian. The degree of the toric ideal $J_\Lambda$ is the volume of its associated polytope which is the matching field polytope $\MP_\Lambda$. Since combinatorial mutations preserve volume, $\MP_\Lambda$ has the same volume as the Gelfand-Tsetlin polytope $\MP_{\MB_0}$. By Corollary~\ref{cor:GT_vol_degree}, the volume of $\MP_\Lambda$ is the degree of the Grassmannian, hence $J_\Lambda$ is the only component of $\init_{\wb_M}(G_{k,n})$. 
\end{proof}

This gives us an alternative proof that the block diagonal matching fields give rise to toric degenerations of $\Gr(k,n)$. Furthermore the proof, using Theorem~\ref{thm:toric_deg}, yields a large family of new matching fields that give rise to toric degenerations.

\begin{corollary}
The matching field $\MB^\lambda_\ell$ for each $\ell$ and $\lambda$ give rise to toric degenerations of $\Gr(k,n)$. In particular the Pl\"ucker variables form a Khovanskii basis for Pl\"ucker algebra.
\end{corollary}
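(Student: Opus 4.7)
The plan is to reduce the corollary directly to Theorem~\ref{thm:toric_deg} by exhibiting, for each pair $(\ell,\lambda)$, a sequence of combinatorial mutations carrying the Gelfand--Tsetlin polytope $\MP_{\MB_0}$ to the matching field polytope $\MP_{\MB^\lambda_\ell}$. The essential observation is that Theorem~\ref{thm:block_diag_polytope_mutation} is not merely an existence statement about mutation--equivalence of the block diagonal polytopes $\MP_{\MB_0},\MP_{\MB_1},\dots,\MP_{\MB_{n-k+1}}$; its proof (cf.~the overview in \S\ref{sec:mutations_gr3n}) produces an explicit sequence of mutations through matching field polytopes, and the intermediate matching fields appearing in this sequence are, by construction, precisely the intermediate matching fields $\MB^\lambda_\ell$ introduced in Definition~\ref{def:int_mf}. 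Hence every $\MP_{\MB^\lambda_\ell}$ appears as an intermediate polytope along a mutation path beginning at the Gelfand--Tsetlin polytope $\MP_{\MB_0}$.

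From this the toric degeneration claim follows in a single step. Given $\ell$ and $\lambda$, concatenate the sequence of mutations $\MP_{\MB_0} \rightsquigarrow \MP_{\MB_1} \rightsquigarrow \cdots \rightsquigarrow \MP_{\MB_\ell}$ from Theorem~\ref{thm:block_diag_polytope_mutation} and stop at the partial sequence whose final polytope is $\MP_{\MB^\lambda_\ell}$. Since $\MB^\lambda_\ell$ is coherent (as proved earlier in \S\ref{sec:intermediate_matching_fields}), Theorem~\ref{thm:toric_deg} applies and yields $\init_{\wb_{M_\ell^\lambda}}(G_{k,n}) = J_{\MB^\lambda_\ell}$, which is a toric ideal. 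This is exactly the statement that $\MB^\lambda_\ell$ gives rise to a toric Gr\"obner degeneration of $\Gr(k,n)$.

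For the Khovanskii basis assertion, recall that by \cite{kaveh2019khovanskii} a generating set $\{f_1,\dots,f_r\}$ of the homogeneous coordinate ring of a projective variety is a Khovanskii basis for a valuation $\nu$ induced by a weight vector if and only if the corresponding initial ideal is generated by binomials coming from relations among the initial forms of the $f_i$, equivalently if and only if the initial ideal coincides with the kernel of the monomial map sending $f_i$ to $\mathrm{in}_\nu(f_i)$. In our setting the weight matrix $M_\ell^\lambda$ produces the weight vector $\wb_{M_\ell^\lambda}$ on the Pl\"ucker variables, and the previous paragraph shows that $\init_{\wb_{M_\ell^\lambda}}(G_{k,n}) = J_{\MB^\lambda_\ell}$, which is precisely the kernel of the monomial map $\varphi_{\MB^\lambda_\ell}$ sending each $P_I$ to its initial term. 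Therefore the Pl\"ucker variables form a Khovanskii basis for the Pl\"ucker algebra with respect to the valuation induced by $\wb_{M_\ell^\lambda}$.

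The only nontrivial ingredient is the identification of the intermediate polytopes along the mutation sequence with the polytopes of the intermediate matching fields $\MB^\lambda_\ell$; this is the content of Theorem~\ref{thm:block_diag_polytope_mutation} together with the explicit construction in \S\ref{sec:mutations}. Granted that identification and the coherence of $\MB^\lambda_\ell$, the present corollary is an immediate consequence of Theorem~\ref{thm:toric_deg}, and no further computation is required.
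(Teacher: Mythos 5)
Your approach matches the paper's exactly: the corollary is an immediate consequence of Theorem~\ref{thm:toric_deg} once one observes that each $\MP_{\MB^\lambda_\ell}$ appears as an intermediate polytope along the explicit mutation sequence from $\MP_{\MB_0}$ constructed in \S\ref{sec:mutations}, together with the coherence of $\MB^\lambda_\ell$ established in \S\ref{sec:intermediate_matching_fields}. One small indexing slip: since the $\MB^\lambda_\ell$ interpolate between $\MB_\ell$ and $\MB_{\ell+1}$, the truncated path should terminate within the leg $\MP_{\MB_\ell} \rightsquigarrow \MP_{\MB_{\ell+1}}$, not end at $\MP_{\MB_\ell}$ as your wording suggests; this does not affect the correctness of the argument.
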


In \S\ref{sec:mutations} we prove that the polytope of 
$\MB^\lambda_\ell$ is obtained from the Gelfand-Tsetlin polytope by a sequence of combinatorial mutations. Hence the above corollary follows from Theorem~\ref{thm:toric_deg}. 

\section{Mutations between matching field polytopes}\label{sec:mutations}

Throughout this section we work with linear maps between $\RR^{k \times n}$ and other vector spaces. So it is useful to define the following notation for certain projections of the matching field polytopes.
\begin{definition}
Let $V$ be a vector space over $\RR$. Let $\Pi$ be a $k \times n$ matrix whose entries $\Pi_{i,j}$ are elements of $V$. We write $\Pi$ for the linear map $\RR^{k \times n} \rightarrow V$ which takes each $e_{i,j} \in \RR^{k \times n}$ to $\Pi_{i,j}$.
\end{definition}

\begin{remark}
If the non-zero entries of $\Pi$ are linearly independent, then we can think of $\Pi$ as a projection. Let $\Pi$ be a projection and $\MP$ be a matching field polytope. Suppose that $\Pi(\MP)$ is full-dimensional and has the same dimension as $\MP$. Then there exists a linear inverse map $\Pi^{-1}$ such that $\Pi^{-1} \circ \Pi$ acts by the identity on $\MP$. In order to prove that pairs of polytopes differ by a combinatorial mutation, it is convenient to work with full-dimensional polytopes. So in this section, we construct projections with the above properties and show that the image of the projections differ by a combinatorial mutation. We note that the combinatorial mutation can be pulled back along the projections to give a mutation between the original matching field polytopes.
\end{remark}

In some cases the matching field polytopes are unimodular equivalent and the combinatorial mutations acts as a relabelling of the vertices.

\begin{definition}
A linear map $\phi : V \rightarrow V$ is called a \textit{transvection} (or \textit{shear}) if $\phi$ can be written as a matrix with $1$'s along the leading diagonal and with at most one other non-zero entry.
\end{definition}

\subsection{Grassmannian \texorpdfstring{$\Gr(3,n)$}{Gr(3,n)}}\label{sec:mutations_gr3n}

In this section we prove Theorem~\ref{thm:block_diag_polytope_mutation} for Grassmannian $\Gr(3,n)$. Throughout this section we assume that all matching fields are for $\Gr(3,n)$. We begin by showing that the polytope $\MP_{\MB_1}$ can be obtained from the Gelfand-Tsetlin polytope $P_{\MB_0}$ by a sequence of combinatorial mutations.

\begin{theorem}\label{thm:gr3n_mutate_01_block}
The block diagonal matching field polytopes $\MP_{\MB_0}$ and $\MP_{\MB_1}$ can be obtained from one another by a sequence of combinatorial mutations.
\end{theorem}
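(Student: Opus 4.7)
My plan is to interpolate between $\MP_{\MB_0}$ and $\MP_{\MB_1}$ using the intermediate matching field polytopes $\MP_{\MB_0^\lambda}$ for $\lambda\in\{2,\ldots,n-1\}$ introduced in \S\ref{sec:intermediate_matching_fields}, recalling that $\MB_0^{n-1}=\MB_1$. Unpacking Definition~\ref{def:int_mf} at $\ell=0$ shows $\MB_0^\lambda(I)=(12)$ exactly when $I=\{1,q,r\}$ with $q\le\lambda$. Therefore in the chain
\begin{equation*}
\MP_{\MB_0}\;\to\;\MP_{\MB_0^2}\;\to\;\MP_{\MB_0^3}\;\to\;\cdots\;\to\;\MP_{\MB_0^{n-1}}=\MP_{\MB_1},
\end{equation*}
the only change at step $j\in\{2,\ldots,n-1\}$ (the step producing $\MP_{\MB_0^j}$) is that each vertex $(1,j,r)$ of the preceding polytope is replaced by $(j,1,r)$ in $\MP_{\MB_0^j}$, for every $r>j$, while all other vertices are preserved. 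My goal is to realise each of these $n-2$ steps as a single combinatorial mutation.

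At step $j$, the difference of the affected vertices is the fixed vector $w_j:=e_{1,j}-e_{1,1}+e_{2,1}-e_{2,j}$, independent of $r$, which is the natural candidate for the mutation direction. Since matching field polytopes are not full-dimensional in $\RR^{3\times n}$ and do not contain the origin, I first construct (following the remark at the start of \S\ref{sec:mutations}) a projection $\Pi\colon\RR^{3\times n}\to V$ sending the source and target polytopes at this step to unimodularly equivalent full-dimensional lattice polytopes containing the origin, under which $\Pi(w_j)$ is primitive. I then take the factor polytope $F_j$ to be a lattice segment $[0,g_j]\subset\Pi(w_j)^\perp$ with $g_j$ chosen so that $\langle g_j,\Pi(v)\rangle=-1$ on every projected vertex $\Pi(1,j,r)$ and $\ge 0$ on all other projected vertices of the source; a direct case-by-case check over the vertex types $(1,q,r)$, $(q,1,r)$ with $q<j$, and $(p,q,r)$ with $p\ge 2$ should produce such a $g_j$ by suitably tuning its entries row by row (using negative entries in row two at position $j$ and in row three at positions $>j$, compensated by positive entries elsewhere to maintain the required inequalities).

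With these choices, the tropical map $\varphi_{\Pi(w_j),F_j}$ sends $\Pi(1,j,r)$ to $\Pi(j,1,r)$ and fixes all other projected vertices, giving the correct image at the level of vertex sets. The main obstacle is the convexity requirement in the definition of combinatorial mutation: $\varphi_{\Pi(w_j),F_j}$ is only piecewise linear, so one must show that its image is a convex lattice polytope equal to the projected target polytope, with no spurious new vertices. I expect this to follow from the technical slicing results of \S\ref{sec:mutations_convexity}: by cutting the projected source polytope along the hyperplanes $\langle\cdot,\Pi(w_j)\rangle=h$ and analysing how each slice transforms under $\varphi_{\Pi(w_j),F_j}$, one identifies the image with the projected target slice-by-slice. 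Pulling each mutation back along $\Pi$ and composing the $n-2$ steps then produces the desired sequence from $\MP_{\MB_0}$ to $\MP_{\MB_1}$.
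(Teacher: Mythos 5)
Your chain of intermediate polytopes $\MP_{\MB_0}\to\MP_{\MB_0^2}\to\cdots\to\MP_{\MB_0^{n-1}}=\MP_{\MB_1}$, the mutation direction $w_j=e_{1,j}-e_{1,1}+e_{2,1}-e_{2,j}$, the use of a projection to obtain full-dimensional polytopes, and the appeal to the convexity/vertex-counting results of \S\ref{sec:mutations_convexity} all match the paper's proof of Theorem~\ref{thm:gr3n_mutate_01_block}. The difference is in the boundary steps $j=2$ and $j=n-1$, and this is where your proposal has a concrete problem.

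You want to choose $F_j=[0,g_j]\subset\Pi(w_j)^\perp$ with $\langle g_j,\Pi(1,j,r)\rangle=-1$ for every $r>j$. For $j=2$ this cannot be done with the sort of projection you describe: since $\Pi$ must send some vertex to the origin (the polytope has to contain the origin), and the natural choice is to send $(1,2,n)$ there --- this is exactly what the paper's projection $\Pi_0$ does, as $\Pi_0(1,2,n)=f_{3,n-2}=0$ --- one then has $\langle g_2,\Pi(1,2,n)\rangle=0\neq -1$. Even if one instead normalises so that some unaffected vertex sits at the origin, one still has $\Pi(w_2)=\Pi(2,1,r)-\Pi(1,2,r)$ independent of $r$, and the paper's choice makes $\Pi_0^2(w_2)=f_{1,1}$ while $\Pi_0^2(1,2,n)=f_{1,1}$: the movement direction is \emph{parallel} to the vertex you are trying to move, so orthogonality $g_2\perp\Pi(w_2)$ forces $\langle g_2,\Pi(1,2,n)\rangle=0$, a contradiction. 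A symmetric obstruction arises at $j=n-1$, where $\Pi_0^2(w_{n-1})=-(f_{1,1}+f_{2,n-3})=-\Pi_0^2(1,n-1,n)$. The paper sidesteps both boundary steps by not treating them as mutations with a nontrivial factor at all: for $j=2$ it uses two projections $\Pi_0$ and $\Pi_0^2$ and observes $\Pi_0(\MP_{\MB_0})=\Pi_0^2(\MP_{\MB_0^2})$ outright (a relabelling of vertices), and for $j=n-1$ it uses an explicit transvection $\varphi_{(1,n-1)}$ taking $\Pi_0^2(\MP_{\MB_0^{n-2}})$ to $\Pi_1(\MP_{\MB_1})$. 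Your middle steps $3\le j\le n-2$ are exactly the paper's $\varphi_{(1,\lambda)}$, and there the construction of $F_{(1,\lambda)}$, the check that $\langle g_j,\Pi(v)\rangle\in\{-1,0,1\}$, and the convexity arguments (Lemmas~\ref{lem:inner_prod_F}--\ref{lem:no_edge_in_image} and Proposition~\ref{prop:convex}) go through as you anticipate. So the core of your plan is sound, but you should separate out the two end steps and handle them as unimodular equivalences rather than forcing them into the factor-polytope template.
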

\begin{proof}
We begin by giving an overview of the structure of the proof, see Figure~\ref{fig:overview_mutate_01}. We construct a sequence of combinatorial mutations taking the polytope $\MP_{\MB_0}$ to $\MP_{\MB_1}$ which passes through the intermediate polytopes $\MP_{\MB_0^\lambda} \subseteq \RR^{3 \times n}$ for each $2 \le \lambda \le n-1$. We do this by constructing projections $\Pi_0, \Pi_0^2$ and $\Pi_1$ from $\RR^{3 \times n}$ to $\RR^{3 \times (n-3)}$ and tropical maps $\varphi_{(1,\lambda)} : \RR^{3 \times (n-3)} \rightarrow \RR^{3 \times (n-3)}$. We show that the tropical maps lift to combinatorial mutations of the matching field polytopes.

\begin{figure}
    \centering
    \includegraphics[scale=0.85]{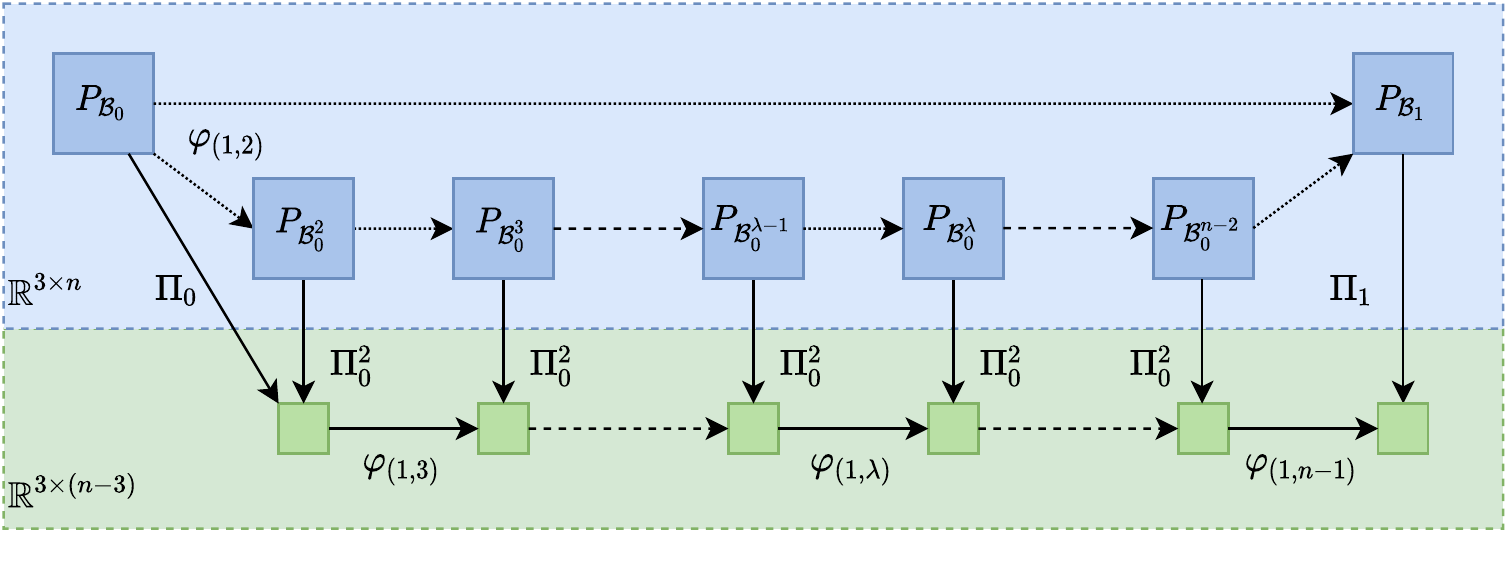} 
    \caption{Overview of the proof of Theorem~\ref{thm:gr3n_mutate_01_block}. The shaded squares represent polytopes. The squares in the top region are matching field polytopes for block diagonal and intermediate matching fields. The squares in the lower region are the images of these polytopes under the given projections.}
    \label{fig:overview_mutate_01}
\end{figure}

Consider $V = \RR^{3 \times (n-3)}$ with canonical basis $f_{i,j}$ where $1 \le i \le 3$ and $1 \le j \le n-3$. To simplify our notation we use $f_{3,n-2} = 0$ as a dummy variable which has value zero. Let $\Pi_0$ be the matrix
\[
\Pi_0 = 
\begin{bmatrix}
 0 & f_{1,1} & f_{1,2} & f_{1,3} & \dots & f_{1, n-3} & 0 & 0 \\
 0 & 0 & f_{2,1} & f_{2,2} & \dots & f_{2, n-4} & f_{2, n-3} & 0 \\
 0 & 0 & f_{3,1} & f_{3,2} & \dots & f_{2, n-4} & f_{3, n-3} & f_{3,n-2}=0 
\end{bmatrix}.
\]
Consider the diagonal matching field polytope $\MP_{\MB_0}$ whose vertices are the vectors $e_{1,i} + e_{2,j} + e_{3,k}$, which we write as $(i,j,k)$ where $1 \le i < j < k \le n$. So let us consider the vertices of $\Pi_0(\MP_{B_0})$. Note that $\Pi_0(1,2,n)=f_{3,n-2}=\underline{0}.$ We have
\begin{center}
\begin{tabular}{lll}\toprule
    $v \in V(\MP_{\MB_0})$ & $\Pi_0(v)$ & conditions \\
    \midrule
    $(1,2,k)$ & $f_{3,k-2}$ & $3 \le k \le n$\\
    $(1,j,k)$ & $f_{2,j-2} + f_{3,k-2}$ & $3 \le j < k \le n$ \\
    $(i,j,k)$ & $f_{1, i-1} + f_{2, j-2} + f_{3, k-2}$ & $2 \le i< j < k \le n$ \\
    \bottomrule
\end{tabular}
\end{center}
Note that $\MP_{\MB_0}$ is a $0/1$ polytope and the image of its vertices under $\Pi_0$ are $0/1$ vectors. It follows that the convex hull of the image of the vertices of $\MP_{\MB_0}$ coincides with $\Pi_0(\MP_{\MB_0})$. Next let $\Pi^2_0$ be the following matrix. We have highlighted the entries which are different than those of $\Pi_0$.
\[
\Pi^2_0 = 
\begin{bmatrix}
 {\bf f_{1,1}} & {\bf 0} & f_{1,2} & f_{1,3} & \dots & f_{1, n-3} & 0 & 0 \\
 0 & 0 & f_{2,1} & f_{2,2} & \dots & f_{2, n-4} & f_{2, n-3} & 0 \\
 0 & 0 & f_{3,1} & f_{3,2} & \dots & f_{2, n-4} & f_{3, n-3} & 0 
\end{bmatrix}.
\]
Similarly to the diagonal case, we write the vertices of the matching field polytope  $\MP_{\MB^2_0}$ as $(i,j,k)$. Let us, as before, consider the vertices of $\Pi^2_0(\MP_{\MB^2_0})$ which can be read off from Table~\ref{tab:vertices_under_Pi_12}.

\begin{table}
    \centering
    \begin{tabular}{lll}\toprule
        $v \in V(\MP_{\MB^{\lambda}_0})$ & $\Pi^2_0(v)$ & conditions \\
    \midrule
        $(2,1,k)$ & $f_{3,k-2}$ & $3 \le k \le n$ \\
        $(i,1,k)$ & $f_{1,i-1} + f_{3,k-2}$ & $3 \le i < k \le n$ and $i \le \lambda$ \\
        $(1,j,k)$ & $f_{1,1} + f_{2,j-2} + f_{3, k-2}$ & $\lambda < j < k \le n$ \\
        $(2,j,k)$ & $f_{2,j-2} + f_{3,k-2}$ & $3 \le j < k \le n$ \\
        $(i,j,k)$ & $f_{1, i-1} + f_{2, j-2} + f_{3, k-2}$ & $3 \le i< j < k \le n$ \\
    \bottomrule
    \end{tabular}
    \caption{Vertices of the polytopes $\Pi^2_0(\MP_{\MB^\lambda_0})$ for all $2 \le \lambda \le n-1$. Here $f_{3,n-2}=0$.}
    \label{tab:vertices_under_Pi_12}
\end{table}
We see immediately that $\Pi_0(\MP_{\MB_0}) = \Pi^2_0(\MP_{\MB^2_0})$. This means that the polytopes $\MP_{\MB^2_0}$ and $\MP_{\MB_0}$ are unimodular equivalent. We write
\[
\varphi_{(1,2)} = \left(\Pi^2_0 \right)^{-1} \circ \Pi_0 : \MP_{\MB_0} \rightarrow \MP_{\MB^2_0}
\]
for the unimodular map between them which can be thought of as a relabelling of the vertices of the polytope $\MP_{\MB_0}$.

\medskip

We now define a collection of tropical maps $\varphi_{(1,\lambda)} = \varphi_{w_{(1,\lambda)}, F_{(1,\lambda)}} : V \rightarrow V$ for $ 3 \le \lambda \le n-2$ where
\[
w_{(1,\lambda)} = \Pi^2_0 \left(
\begin{bmatrix}
-1 & 0 & \dots & 0 & 1 & 0 & \dots & 0 \\
1 & 0 & \dots & 0 & -1 & 0 & \dots & 0 \\
0 & 0 & \dots & 0 &  0 & 0 & \dots & 0
\end{bmatrix}
\right)
= -f_{1,1} +f_{1,\lambda-1} -f_{2,\lambda-2}.
\]
In the above matrix, the non-zero entries lie in columns $1$ and $\lambda$. We also have
\begin{align*}
F_{(1,\lambda)} &= \conv\left\{ \underline{0},\ 
\Pi^2_0 \left(
\begin{bmatrix}
-1 & 0 & \dots & 0 & -1 & -1 & \dots & -1 \\
 0 & 0 & \dots & 0 &  0 &  1 & \dots & 1  \\
 0 & 0 & \dots & 0 &  0 &  0 & \dots & 0  
\end{bmatrix}
\right)
\right\} \\
&= \conv \left\{ \underline{0}, \ 
-f_{1,1} +
\sum_{\mu = \lambda-1}^{n-3} (-f_{1, \mu} + f_{2, \mu})
\right\},
\end{align*}
where the columns with entries $(-1,0,0)$ are $1$ and $\lambda$. Note that $F_{(1,\lambda)} \subseteq w_{(1,\lambda)}^\perp$.

\medskip

Let us now consider the action of $\varphi_{(1,\lambda)}$ on the image of the matching field polytope $\MP_{\MB^{\lambda-1}_0}$ under $\Pi^2_0$. The vertices of this polytope are written down in Table~\ref{tab:vertices_under_phi_lambda}. From this we see that $\varphi_{(1,\lambda)}$ changes $(1,\lambda,k)$ to $(\lambda, 1, k)$ for $\lambda < k \le n$. And so, by Proposition~\ref{prop:convex}, the image of the  polytope corresponding to $\MB^{\lambda-1}_0$ is determined by the image of its vertices which is exactly the polytope corresponding to $\MB^\lambda_0$.

\begin{table}
    \centering
    \resizebox{\textwidth}{!}{
    \begin{tabular}{clll}\toprule
        $v \in V(\MP_{\MB^{\lambda - 1}_0})$ 
        & $\Pi^2_0(v)$ 
        & $\varphi_{(1,\lambda)}\left(\Pi^2_0(v)\right)$
        & conditions \\
    \midrule
        $(2,1,k)$ & $f_{3,k-2}$ & $f_{3,k-2}$ & $3 \le k \leq n$ \\
        $(i,1,k)$ & $f_{1,i-1} + f_{3,k-2}$ & $f_{1,i-1} + f_{3,k-2}$ & $3 \le i < k \leq n, \  i < \lambda$ \\
        $(1,\lambda,k)$ & $f_{1,1}+f_{2,\lambda-2}+f_{3,k-2}$ & $f_{1,\lambda-1} + f_{3,k-2}$ & $\lambda < k \leq n$ \\
    \midrule
        $(1,j,k)$ & $f_{1,1} + f_{2,j-2} + f_{3, k-2}$ & $f_{1,1} + f_{2,j-2} + f_{3, k-2}$ & $ \lambda < j < k \leq n$ \\
        $(2,j,k)$ & $f_{2,j-2} + f_{3,k-2}$ & $f_{2,j-2} + f_{3,k-2}$ & $3 \le j < k \leq n$\\ 
    \midrule
        $(i,j,k)$ & $f_{1,i-1}+f_{2,j-2}+f_{3, k-2}$ & $f_{1,i-1}+f_{2,j-2}+f_{3, k-2}$ & $3 \le i < j < k \leq n$ \\ 
    \bottomrule
    \end{tabular}
    }
    \caption{For each fixed $3 \le \lambda \le n-2$, we write down the vertices of the polytope $\varphi_{(1,\lambda)}\left(\Pi^2_0(\MP_{\MB^{\lambda-1}_0})\right)$. Note that $\Pi_0(1,2,n)=f_{3,n-2}=\underline{0}.$ }
    \label{tab:vertices_under_phi_lambda}
\end{table}

\medskip

We now define the matrix $\Pi_1$ as follows \[\Pi_1 = 
\begin{bmatrix}
0 & 0 & f_{1,2} & f_{1,3} & \dots & f_{1,n-3} & f_{1,1}   & 0 \\
0 & 0 & f_{2,1} & f_{2,2} & \dots & f_{2,n-4} & f_{2,n-3} & 0 \\
0 & 0 & f_{3,1} & f_{3,2} & \dots & f_{3,n-4} & f_{3,n-3} & 0 
\end{bmatrix}
\]
We will show that $\Pi^2_0\left( \MP_{\MB^{n-2}_0}\right)$ and $\Pi_1\left(\MP_{\MB_1}\right)$ are unimodular equivalent. The only vertices which differ are those corresponding to $(1,n-1,n)$ and $(n-1,1,n)$ respectively. We have $\Pi^2_0(1,n-1,n) = f_{1,1} + f_{2,n-3}$ and $\Pi_1(n-1,1,n) = f_{1,1}$. Let $\varphi_{(1,n-1)} : V \rightarrow V$ be the linear map defined by
\[
\varphi_{(1,n-1)}(f_{i,j}) = \left\{ 
\begin{tabular}{ll}
    $f_{1,1} - f_{2, n-3}$ &  if $(i,j) = (1,1)$,\\
    $f_{i,j}$ & otherwise. 
\end{tabular}
\right.
\]
The matrix corresponding to this map is a transvection, hence defines a unimodular transformation. Since the only vertex of the polytope $\Pi^2_0\left(\MP_{\MB^{n-2}_0}\right)$ which contains $f_{1,1}$ is the vertex corresponding to $(1,n-1,n)$, it follows that $\varphi_{(1,n-1)}$ sends $\Pi^2_0\left(\MP_{\MB^{n-2}_0}\right)$ to $\Pi_1\left(\MP_{\MB_1}\right)$ as desired.
\end{proof}

Similarly, we show that the matching field polytopes $\MP_{\MB_1}$ and $\MP_{\MB_2}$ are related by a sequence of combinatorial mutations. 

\begin{theorem}\label{thm:gr3n_mutate_12_block}
The block diagonal matching field polytope $\MP_{\MB_2}$ can be obtained from $\MP_{\MB_1}$ by a sequence of combinatorial mutations.
\end{theorem}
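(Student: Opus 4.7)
The plan is to mirror the strategy used in Theorem~\ref{thm:gr3n_mutate_01_block}, building a chain of polytopes that passes through the intermediate matching field polytopes $\MP_{\MB_1^\lambda}$ for $\lambda \in \{3, \ldots, n-1\}$. Concretely, I will exhibit a sequence
\[
\MP_{\MB_1} \;\longrightarrow\; \MP_{\MB_1^3} \;\longrightarrow\; \MP_{\MB_1^4} \;\longrightarrow\; \cdots \;\longrightarrow\; \MP_{\MB_1^{n-1}} \;\longrightarrow\; \MP_{\MB_2}
\]
in which the first and last arrows are unimodular equivalences realised on the image of a projection by a transvection, and every middle arrow is a genuine combinatorial mutation. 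As in the previous proof, the analysis will take place after passing to suitable full-dimensional projections: I will introduce projections $\Pi_1, \Pi_1^3, \Pi_2 : \RR^{3 \times n} \to \RR^{3 \times (n-3)}$ with $\Pi_1$ the projection already set up for $\MP_{\MB_1}$, $\Pi_1^3$ a small perturbation of $\Pi_1$ built so that $\Pi_1(\MP_{\MB_1}) = \Pi_1^3(\MP_{\MB_1^3})$, and $\Pi_2$ an analog tailored to $\MP_{\MB_2}$. The identity $\Pi_1(\MP_{\MB_1}) = \Pi_1^3(\MP_{\MB_1^3})$ then gives the opening arrow as a unimodular equivalence.

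For the middle arrows I would construct tropical maps $\varphi_{(2,\lambda)} = \varphi_{w_{(2,\lambda)}, F_{(2,\lambda)}}$ on $\RR^{3 \times (n-3)}$ for each $\lambda \in \{4, \ldots, n-1\}$. The key design principle is dictated by Definition~\ref{def:int_mf}: $\MB_1^{\lambda-1}$ and $\MB_1^\lambda$ differ exactly on the triples $I = \{2,\lambda, r\}$ with $\lambda < r \leq n$, so $w_{(2,\lambda)}$ and $F_{(2,\lambda)}$ should be chosen so that $\varphi_{(2,\lambda)}$ fixes every projected vertex except the images of the vertices $(\lambda, 2, r)$, which are translated to the images of $(2, \lambda, r)$. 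The weight $w_{(2,\lambda)}$ will therefore be the analog of $w_{(1,\lambda)}$ from the previous proof with the distinguished row shifted from $1$ to $2$, and $F_{(2,\lambda)}$ will be the convex hull of the origin together with a single lattice vector encoding the swap.

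The closing arrow $\MP_{\MB_1^{n-1}} \to \MP_{\MB_2}$ is handled by a transvection applied to $\Pi_1^3(\MP_{\MB_1^{n-1}})$. Comparing Definitions~\ref{def:block} and~\ref{def:int_mf} shows that $\MB_1^{n-1}$ and $\MB_2$ agree on every triple except those of the form $I = \{1,2,r\}$ for $3 \leq r \leq n$: $\MB_1^{n-1}$ sends such $I$ to $(12)$ (producing the vertex $(2,1,r)$) whereas $\MB_2$ sends $I$ to the identity (producing $(1,2,r)$). Choosing $\Pi_2$ so that all these projected images share a single distinguishing coordinate absent from every other vertex of $\Pi_1^3(\MP_{\MB_1^{n-1}})$, a single transvection will implement the required shear simultaneously on the entire family, without perturbing any other vertex.

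The main obstacle will be the careful verification, for each $\lambda$, that $w_{(2,\lambda)}$ attains its minimum over the vertices of $\Pi_1^3(\MP_{\MB_1^{\lambda-1}})$ exactly at the images of $(\lambda, 2, r)$ for $r > \lambda$, that $F_{(2,\lambda)} \subseteq w_{(2,\lambda)}^\perp$, and that the image $\varphi_{(2,\lambda)}(\Pi_1^3(\MP_{\MB_1^{\lambda-1}}))$ is convex and equals $\Pi_1^3(\MP_{\MB_1^\lambda})$. These are precisely the hypotheses of Proposition~\ref{prop:convex} from \S\ref{sec:mutations_convexity}, which will be the central technical tool. A secondary care-point is guaranteeing that the projections $\Pi_1, \Pi_1^3, \Pi_2$ are full-rank on the affine hulls of the respective matching field polytopes, so that the mutations on the projected images pull back along the inverse projections to bona fide combinatorial mutations on the polytopes living in $\RR^{3 \times n}$.
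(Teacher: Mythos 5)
Your proposal follows the same structure as the paper's proof: project to $\RR^{3 \times (n-3)}$ via $\Pi_1, \Pi_1^3, \Pi_2$, open and close the chain with transvections linking $\MP_{\MB_1}$ to $\MP_{\MB_1^3}$ and $\MP_{\MB_1^{n-1}}$ to $\MP_{\MB_2}$, connect each consecutive pair $\MP_{\MB_1^{\lambda-1}}$, $\MP_{\MB_1^\lambda}$ for $4 \le \lambda \le n-1$ by a tropical map $\varphi_{(2,\lambda)}$, and invoke Proposition~\ref{prop:convex} for convexity. Two minor bookkeeping slips that would surface in execution but do not affect the plan: in the chain direction you specify, $\varphi_{(2,\lambda)}$ sends the projected vertex $(2,\lambda,r)$ of $\MP_{\MB_1^{\lambda-1}}$ to the projected vertex $(\lambda,2,r)$ of $\MP_{\MB_1^{\lambda}}$ (you have these swapped), and the projected images $\Pi_1(\MP_{\MB_1})$ and $\Pi_1^3(\MP_{\MB_1^3})$ are not literally equal but are related by the transvection $\varphi_{(2,3)}$.
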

\begin{proof}
In Figure~\ref{fig:overview_mutate_12}, we given an overview of the structure of the proof. We construct a sequence of combinatorial mutations taking the polytope $\MP_{\MB_1}$ to $\MP_{\MB_2}$ which passes through the intermediate polytopes $\MP_{\MB_1^\lambda} \subseteq \RR^{3 \times n}$ for each $3 \le \lambda \le n-1$. We do this by constructing projections $\Pi_1, \Pi_1^3$ and $\Pi_2$ from $\RR^{3 \times n}$ to $\RR^{3 \times (n-3)}$ and tropical maps $\varphi_{(2,\lambda)} : \RR^{3 \times (n-3)} \rightarrow \RR^{3 \times (n-3)}$. We show that the tropical maps lift to combinatorial mutations of the matching field polytopes.

\begin{figure}
    \centering
    \includegraphics[scale=0.85]{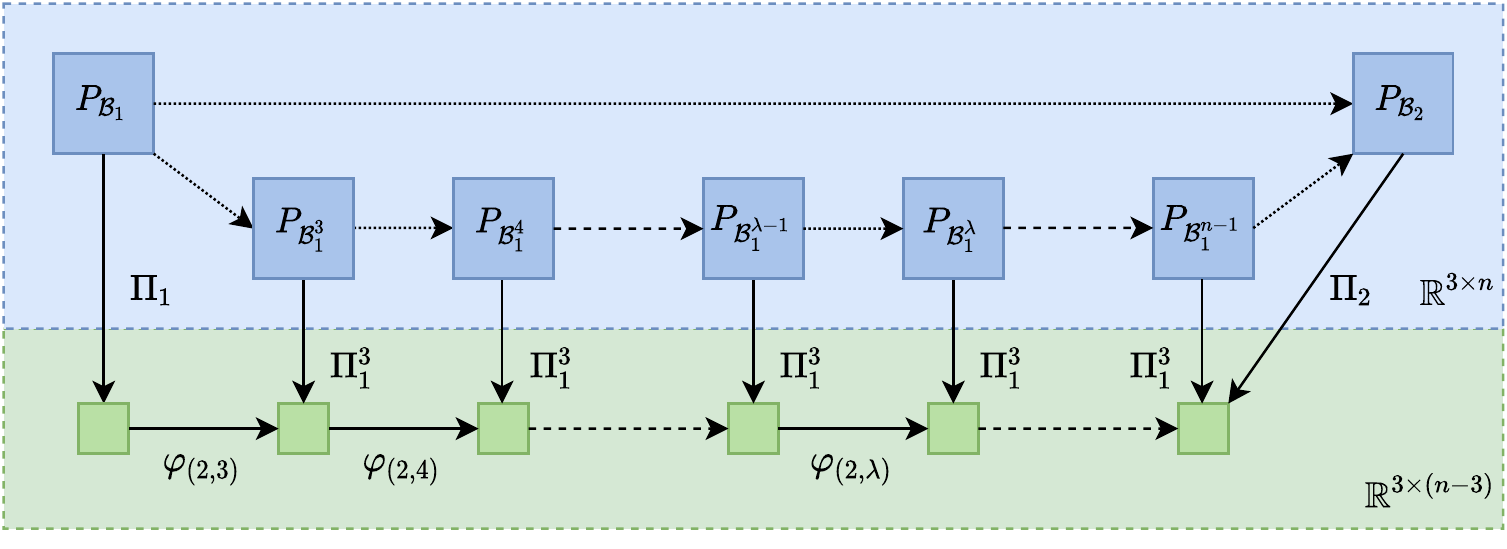} 
    \caption{Overview of the proof of Theorem~\ref{thm:gr3n_mutate_12_block}. The shaded squares represent polytopes. The squares in the top region are matching field polytopes for block diagonal and intermediate matching fields. The squares in the lower region are the images of these polytopes under the given projections.}
    \label{fig:overview_mutate_12}
\end{figure}

\medskip

Let $V = \RR^{3 \times (n-3)}$ with canonical basis $f_{i,j}$ where $1 \le i \le 3$ and $1 \le j \le n$. We define the projections
\[
\Pi_1 = 
\begin{bmatrix}
0 & 0 & f_{1,1} & f_{1,2} & \cdots & f_{1,n-3} & 0 \\
0 & \mathbf{0} & \mathbf{f_{2,1}} & f_{2,2} & \cdots & f_{2,n-3} & 0 \\
0 & 0 & f_{3,1} & f_{3,2} & \cdots & f_{3,n-3} & 0
\end{bmatrix}
\quad\text{and}\quad
\Pi^3_1 = 
\begin{bmatrix}
0 & 0 & f_{1,1} & f_{1,2} & \cdots & f_{1,n-3} & 0 \\
0 & \mathbf{f_{2,1}} & \mathbf{0} & f_{2,2} & \cdots & f_{2,n-3} & 0 \\
0 & 0 & f_{3,1} & f_{3,2} & \cdots & f_{3,n-3} & 0
\end{bmatrix}.
\]
We have highlighted the distinct entries of $\Pi_1$ and $\Pi^3_1$. Let us write down the vertices of $\MP_{\MB_1}$ under $\Pi_1$. Note that the vertex $(2,1,n)$ is mapped to $\underline{0}$ under $\Pi_1$.
\begin{center}
    \begin{tabular}{lll}\toprule
        $v \in V(\MP_{\MB_1})$ & $\Pi_1(v)$ & conditions \\
        \midrule
        $(2,1,k)$ & $f_{3,k-2}$ & $3 \le k \le n$\\
        $(i,1,k)$ & $f_{1,i-2} + f_{3, k-2}$ & $3 \le i < k \leq n$ \\
        $(2,j,k)$ & $f_{2,j-2} + f_{3,k-2}$ & $3 \le j < k \le n$ \\
        $(i,j,k)$ & $f_{1,i-1} + f_{2,j-2} + f_{3,k-2}$ & $3 \le i< j < k \le n$ \\
        \bottomrule
    \end{tabular}
\end{center}
We also write down the vertices of $\MP_{\MB^3_1}$ under $\Pi^3_1$.
\begin{center}
    \begin{tabular}{lll}\toprule
        $v \in V\left(\MP_{\MB^3_1}\right)$ & $\Pi^3_1(v)$ & conditions \\
        \midrule
        $(2,1,k)$ & $f_{3,k-2}$ & $3 \le k \le n$\\
        $(i,1,k)$ & $f_{1,i-2} + f_{3, k-2}$ & $3 \le i < k \leq n$ \\
        $(3,2,k)$ & $f_{1,1} + f_{2,1} + f_{3,k-2}$ & $4 \le k \le n$ \\
        $(2,j,k)$ & $f_{2,j-2} + f_{3,k-2}$ & $4 \le j < k \le n$ \\
        $(i,j,k)$ & $f_{1,i-1} + f_{2,j-2} + f_{3,k-2}$ & $3 \le i< j < k \le n$ \\
        \bottomrule
    \end{tabular}
\end{center}
We define the linear map $\varphi_{(2,3)} : V \rightarrow V$ as follows
\[
\varphi_{(2,3)}(f_{i,j}) = \left\{ 
\begin{tabular}{ll}
    $f_{2,1} + f_{1,1}$ &  if $(i,j) = (2,1)$\\
    $f_{i,j}$ & otherwise. 
\end{tabular}
\right.
\]
Consider the vertices of $\MP_{\MB_1}$ under $\Pi_1$. We have that the vertices 
\[
\Pi_1(2,3,k) = f_{2,1} + f_{3,k-2} \quad \text{for}\quad 4 \le k \le n
\]
are precisely the vertices which contain $f_{2,1}$. So the vertices of $\varphi_{(2,3)}\left( \Pi_1(\MP_{\MB_1}) \right)$ are the same as $\Pi_1(\MP_{\MB_1})$ except $\Pi(2,3,k) = f_{2,1} + f_{3,k-2}$ which are mapped to $f_{1,1} + f_{2,1} + f_{3,k-2}$. From the tables above we see that these are precisely the vertices of $\Pi^3_1\left( \MP_{\MB^3_1}\right)$. It follows that $\varphi_{(2,3)}$ induces a unimodular transformation taking $\MP_{\MB_1}$ to $\MP_{\MB^3_1}$.

\medskip

We now show that there is a combinatorial mutation taking $\MP_{\MB^{\lambda-1}_1}$ to $\MP_{\MB^\lambda_1}$. We define the tropical map $\varphi_{(2,\lambda)} = \varphi_{w_{(2,\lambda)}, F_{(2,\lambda)}}$ for each $4 \le \lambda \le n-1$ where
\[
w_{(2,\lambda)} = 
\Pi^3_1\left(
\begin{bmatrix}
0 & -1 & 0 & \dots & 0 &  1 & 0 & \dots & 0 \\
0 &  1 & 0 & \dots & 0 & -1 & 0 & \dots & 0 \\
0 &  0 & 0 & \dots & 0 & 0 & 0 & \dots & 0 
\end{bmatrix}
\right)
= f_{1,\lambda-2} + f_{2,1} - f_{2,\lambda-2}.
\]
In the above matrix the non-zero entries lie in columns $2$ and $\lambda$. We also have
\begin{align*}
F_{(2,\lambda)} &= 
\conv \left\{ \underline{0}, \ \Pi^3_1 \left(
\begin{bmatrix}
 0 &  0 &  1 & \dots &  1 &  0 & 0 & \dots & 0 \\
 0 & -1 & -1 & \dots & -1 & -1 & 0 & \dots & 0 \\
 0 &  0 &  0 & \dots & 0  &  0 & 0 & \dots & 0 \\
\end{bmatrix}
\right)
\right\}\\
&= \conv \left\{
\underline{0}, \ -f_{2,1} + \sum_{\mu = 1}^{\lambda-3} (f_{1, \mu} - f_{2,\mu+1})
\right\}.
\end{align*}
In the matrix in the definition of $F_{(2,\lambda)}$, the columns with entries $(0,-1,0)^T$ are $2$ and $\lambda$.

We now calculate the image of $\Pi^3_1\left( \MP_{\MB^{\lambda-1}_1}\right)$ under $\varphi_{(2,\lambda)}$. The vertices of the image are written down in Table~\ref{tab:vertices_under_phi_2_lambda}. We see that the only vertices which are changing by the tropical map are those corresponding to $(2,\lambda,k)$ for each $k \in \{\lambda + 1, \dots, n \}$. Furthermore, we see that the vertices of $\Pi^3_1\left(\MP_{\MB^{\lambda-1}_1}\right)$ are mapped by $\varphi_{(2,\lambda)}$ to the vertices of $\Pi^3_1\left(\MP_{\MB^\lambda_1}\right)$.

\begin{table}
    \centering
    \resizebox{\textwidth}{!}{
    \begin{tabular}{clll}\toprule
        $v \in V\left(\MP_{\MB^{\lambda - 1}_1}\right)$ 
        & $\Pi^3_1(v)$ 
        & $\varphi_{(2,\lambda)}\left(\Pi^3_1(v)\right)$
        & conditions \\
    \midrule
        $(2,1,k)$ & $f_{3,k-2}$ & $f_{3,k-2}$ & $3 \le k \le n-1$\\
        $(i,1,k)$ & $f_{1,i-2} + f_{3,k-2}$ & $f_{1,i-2} + f_{3,k-2}$ &  $3 \le i < k \le n$\\
        $(i,2,k)$ & $f_{1,i-2} + f_{2,1} + f_{3,k-2}$ & $f_{1,i-2} + f_{2,1} + f_{3,k-2}$ & $3 \le i < \lambda \leq k \le n$ \\
    \midrule
        $(2, \lambda ,k)$ & $f_{2,\lambda-2} + f_{3,k-2}$ & $f_{1,\lambda -2} + f_{2,1} + f_{3,k-2}$ & $\lambda < k \le n$\\
        $(2, j ,k)$ & $f_{2,j-2}+f_{3,k-2}$ & $f_{2,j-2}+f_{3,k-2}$ & $\lambda < j < k \le n$\\
        $(i, j ,k)$ & $f_{1,i-2}+f_{2,j-2}+f_{3,k-2}$ & $f_{1,i-2}+f_{2,j-2}+f_{3,k-2}$ & $3 \le i < j < k \le n$\\
    \bottomrule
    \end{tabular}
    }
    \caption{For each fixed $4 \le \lambda \le n-1$, we write down the vertices of the polytope $\varphi_{(2,\lambda)}\left(\Pi^3_1\left(\MP_{\MB^{\lambda-1}_1}\right)\right)$. Note that $(2,1,n)$ is mapped to $\underline{0}$ by both $\Pi^3_1$ and $\varphi_{(2,\lambda)}(\Pi^3_1)$.}
    \label{tab:vertices_under_phi_2_lambda}
\end{table}

And so, by Proposition~\ref{prop:convex}, we have constructed a combinatorial mutation between the polytopes corresponding to the matching fields $\MB^{\lambda -1}_1$ and $\MB^{\lambda}_1$ for each $\lambda \in \{4, \dots, n-1 \}$. 

\medskip

The final step is to show that the polytopes corresponding to $\MB^{n-1}_1$ and $\MB_1^n=\MB_2$ are unimodular equivalent. Consider the projection
\[
\Pi_2 = 
\begin{bmatrix}
 0 & 0 & f_{1,1} & f_{1,2} & \dots & f_{1,n-3} & 0 \\
 f_{2,1} & 0 & 0 & f_{2,2} & \dots & f_{2,n-3} & 0 \\
 0 & 0 & f_{3,1} & f_{3,2} & \dots & f_{3,n-3} & 0 \\
\end{bmatrix}.
\]
Let us write down the vertices of $\MP_{\MB_2}$ under the projection $\Pi_2$.

\begin{center}
    \begin{tabular}{llll}\toprule
        $v \in V(\MP_{\MB_2})$ & $\Pi_2(v)$ & conditions & $w \in V(\MP_{\MB^{n-1}_1})$\\
        \midrule
        $(1,2,k)$ & $f_{3,k-2}$ & $3 \le k \le n$ & $(2,1,k)$\\
        $(i,1,k)$ & $f_{1,i-2} +f_{2,1} + f_{3, k-2}$ & $3 \le i < k \leq n$ & $(i,2,k)$\\
        $(i,2,k)$ & $f_{1,i-2} + f_{3,k-2}$ & $3 \le i < k \le n$ & $(i,1,k)$ \\
        $(i,j,k)$ & $f_{1,i-1} + f_{2,j-2} + f_{3,k-2}$ & $3 \le i< j < k \le n$ & $(i,j,k)$ \\
        \bottomrule
    \end{tabular}
\end{center}
In the table we also record for each vertex $v$ of $\MP_{\MB_2}$ the corresponding vertex $w$ of $\MP_{\MB^{n-1}_1}$ such that $\Pi_2(v) = \Pi^3_1(w)$. These polytopes are therefore identical under their respective projections, hence they are unimodular equivalent.
\end{proof}

\begin{theorem}\label{thm:gr3n_mutate_ell_block}
Let $\ell \in \{3, \dots, n-1 \}$. Then there is a sequence of combinatorial mutations taking the polytope $\MP_{\MB_{\ell-1}}$ to $\MP_{\MB_{\ell}}$.
\end{theorem}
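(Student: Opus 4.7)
The plan is to follow the same three-stage template used in the proofs of Theorems~\ref{thm:gr3n_mutate_01_block} and \ref{thm:gr3n_mutate_12_block}. First I would construct three projections $\Pi_{\ell-1}, \Pi^{\ell+1}_{\ell-1}, \Pi_\ell \colon \RR^{3 \times n} \to \RR^{3 \times (n-3)}$ modelled on the ones in the previous proofs. The projection $\Pi_{\ell-1}$ should be chosen so that on the support of $\MP_{\MB_{\ell-1}}$ it kills the columns corresponding to the vertex $v \in \MP_{\MB_{\ell-1}}$ of smallest support (playing the role of the ``basepoint''), sends each first-row coordinate $e_{1,i}$ with $i \le \ell$ into the same image, and acts as a shifted identity on the remaining coordinates. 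The projection $\Pi^{\ell+1}_{\ell-1}$ differs from $\Pi_{\ell-1}$ in one column (analogously to how $\Pi_1^3$ differs from $\Pi_1$ in Theorem~\ref{thm:gr3n_mutate_12_block}), while $\Pi_\ell$ plays the symmetric role for $\MP_{\MB_\ell}$.

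With these projections in hand, the proof has three main steps. Step 1 shows that $\Pi_{\ell-1}(\MP_{\MB_{\ell-1}})$ and $\Pi^{\ell+1}_{\ell-1}(\MP_{\MB^{\ell+1}_{\ell-1}})$ are unimodular equivalent via a transvection on $V = \RR^{3 \times (n-3)}$. Step 2, the core of the argument, produces a sequence of tropical maps $\varphi_{(\ell,\lambda)} = \varphi_{w_{(\ell,\lambda)},F_{(\ell,\lambda)}}$, one for each $\lambda \in \{\ell+2,\dots,n+\ell-2\} \setminus \{n\}$, such that $\varphi_{(\ell,\lambda)}$ sends $\Pi^{\ell+1}_{\ell-1}(\MP_{\MB^{\lambda-1}_{\ell-1}})$ onto $\Pi^{\ell+1}_{\ell-1}(\MP_{\MB^{\lambda}_{\ell-1}})$ and by Proposition~\ref{prop:convex} preserves convexity; the weight vectors $w_{(\ell,\lambda)}$ and factor polytopes $F_{(\ell,\lambda)}$ should be the natural analogues of those in Theorem~\ref{thm:gr3n_mutate_12_block}, with the roles of the ``switched'' columns $2$ and $\lambda$ replaced by columns $\ell$ and $\lambda$. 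Step 3 applies a final transvection to identify $\Pi^{\ell+1}_{\ell-1}(\MP_{\MB_\ell})$ with $\Pi_\ell(\MP_{\MB_\ell})$.

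The step I expect to be the main obstacle is the jump over $\lambda = n$ in Step 2. Since $\ell - 1 \ge 2$, the range of intermediate matching fields now contains both the $\lambda < n$ regime and the $\lambda > n$ regime of Definition~\ref{def:int_mf}, and these are governed by different defining inequalities. This forces a case analysis: one family of tropical maps $\varphi_{(\ell,\lambda)}$ for $\lambda \in \{\ell+2,\dots,n-1\}$ handling the first regime, a bridge mutation that transitions from $\MB^{n-1}_{\ell-1}$ to $\MB^{n+1}_{\ell-1}$ (mirroring how Theorem~\ref{thm:gr3n_mutate_12_block} skipped from $\MB^{n-1}_1$ to $\MB^{n}_1 = \MB_2$, but now with non-trivial target), and a second family for $\lambda \in \{n+1,\dots,n+\ell-2\}$. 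In each case, I would verify the mutation by listing vertex images in a table analogous to Tables~\ref{tab:vertices_under_Pi_12} and \ref{tab:vertices_under_phi_2_lambda} and checking that only the vertices corresponding to tableaux whose matching field assignment changes between $\MB^{\lambda-1}_{\ell-1}$ and $\MB^{\lambda}_{\ell-1}$ are moved by $\varphi_{(\ell,\lambda)}$, and that they are moved precisely to the correct images.

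Granted these three steps, composing the transvections with the sequence of combinatorial mutations pulled back through the projections yields the required chain from $\MP_{\MB_{\ell-1}}$ to $\MP_{\MB_\ell}$, completing the proof.
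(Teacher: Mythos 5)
Your proposal is correct and follows essentially the same approach as the paper's proof: the same three projections $\Pi_{\ell-1}$, $\Pi^{\ell+1}_{\ell-1}$, $\Pi_\ell$, an initial transvection, a chain of tropical maps $\varphi_{(\ell,\lambda)}$ over $\lambda \in \{\ell+2,\dots,n+\ell-2\}\setminus\{n\}$ verified vertex-by-vertex via Proposition~\ref{prop:convex}, and a final transvection to change projections. You also correctly spot the key new difficulty compared with Theorems~\ref{thm:gr3n_mutate_01_block} and \ref{thm:gr3n_mutate_12_block}, namely that the range of $\lambda$ now crosses $n$, forcing separate constructions of $w_{(\ell,\lambda)}$ and $F_{(\ell,\lambda)}$ in the two regimes of Definition~\ref{def:int_mf} and a bridge mutation $\varphi_{(\ell,1)}$ from $\MB^{n-1}_{\ell-1}$ to $\MB^{n+1}_{\ell-1}$ — which is exactly what the paper does.
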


\begin{proof}
We begin by giving an overview of the structure of the proof, see Figure~\ref{fig:overview_mutate_ell}. We construct a sequence of combinatorial mutations taking the polytope $\MP_{\MB_0}$ to $\MP_{\MB_1}$ which passes through the intermediate polytopes $\MP_{\MB_0^\lambda} \subseteq \RR^{3 \times n}$ for each $2 \le \lambda \le n-1$. We do this by constructing projections $\Pi_0, \Pi_0^2$ and $\Pi_1$ from $\RR^{3 \times n}$ to $\RR^{3 \times (n-3)}$ and tropical maps $\varphi_{(1,\lambda)} : \RR^{3 \times (n-3)} \rightarrow \RR^{3 \times (n-3)}$. We show that the tropical maps lift to combinatorial mutations of the matching field polytopes.

\begin{figure}
    \centering
    \includegraphics[scale = 0.85]{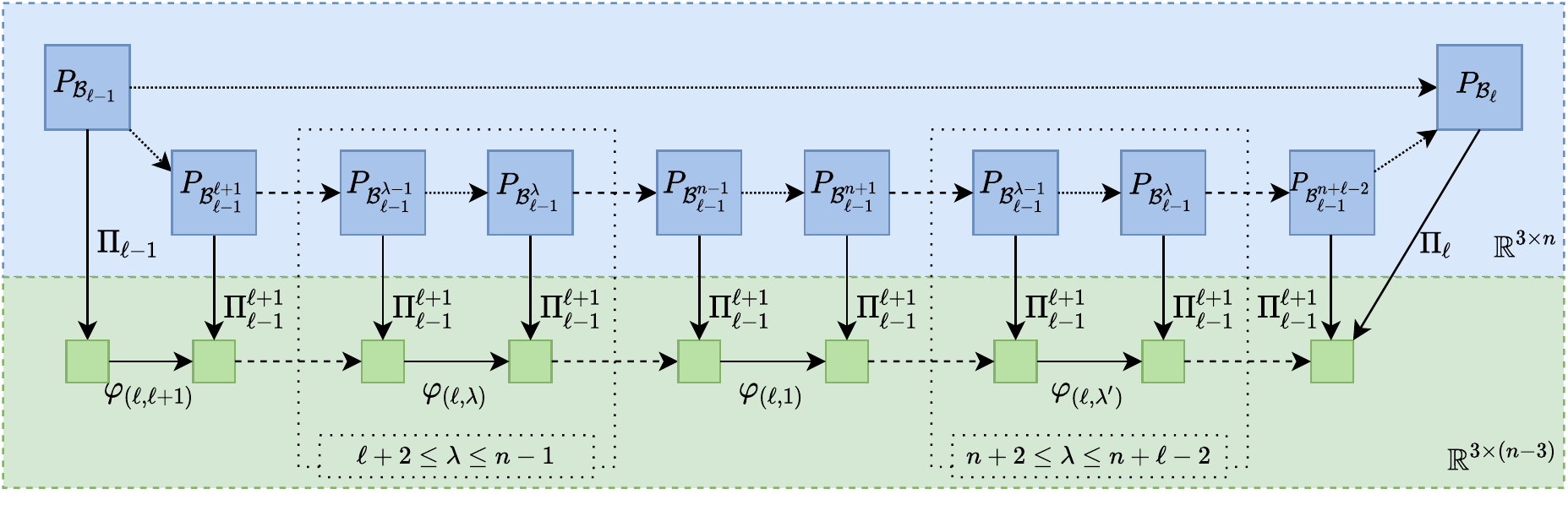} 
    \caption{Overview of the proof of Theorem~\ref{thm:gr3n_mutate_ell_block}. The shaded squares represent polytopes. The squares in the top region are matching field polytopes for block diagonal and intermediate matching fields. The squares in the lower region are the images of these polytopes under the given projections.}
    \label{fig:overview_mutate_ell}
\end{figure}

Let $V = \RR^{3 \times (n-3)}$ with canonical basis given by $\{f_{i,j} : 1 \le i \le 3, 1 \le j \le n-3 \}$ where $f_{i,j}$ is the matrix with $1$ in row $i$ and columns $j$ and zeros everywhere else. For each $\ell \in \{3, \dots, n-1 \}$ we define the projections
\[
\Pi_{\ell - 1} = 
\begin{bmatrix}
0 & f_{1,1} & f_{1,2} & \dots & f_{1,\ell-3} & 0 & f_{1,\ell - 2} & f_{1,\ell - 1} & f_{1,\ell} &\dots & f_{1,n-3} & 0 \\
 f_{2,1} & 0 & f_{2,2} & \dots & f_{2,\ell-3} & f_{2,\ell-2} & \mathbf{0} & \mathbf{f_{2,\ell - 1}} & f_{2,\ell} & \dots & f_{2,n-3} & 0 \\
 0 & 0 & f_{3,1} & \dots & f_{3,\ell-4} & f_{3,\ell-3} & f_{3,\ell - 2} & f_{3,\ell - 1} &f_{3,\ell} & \dots & f_{3,n-3} & 0 
\end{bmatrix},
\]
\[
\Pi^{\ell+1}_{\ell-1} = 
\begin{bmatrix}
0 & f_{1,1} & f_{1,2} & \dots & f_{1,\ell-3} & 0 & f_{1,\ell - 2} & f_{1,\ell - 1} & f_{1,\ell} & \dots & f_{1,n-3} & 0 \\
 f_{2,1} & 0 & f_{2,2} & \dots & f_{2,\ell-3} & f_{2,\ell-2} & \mathbf{f_{2,\ell - 1}} & \mathbf{0} & f_{2,\ell} & \dots & f_{2,n-3} & 0 \\
 0 & 0 & f_{3,1} & \dots & f_{3,\ell-4} & f_{3,\ell-3} & f_{3,\ell - 2} & f_{3,\ell - 1} & f_{3,\ell} & \dots & f_{3,n-3} & 0 
\end{bmatrix}.
\]
Let us write down the vertices of the projection of $\MP_{\MB_{\ell - 1}}$ under $\Pi_{\ell - 1}$.

\begin{center}
    \begin{tabular}{lll}\toprule
        $v \in V(\MP_{\MB_{\ell - 1}})$ & $\Pi_{\ell -1}(v)$ & conditions \\
        \midrule
        $(1,2,k)$ & $f_{3,k-2}$ & $3 \le k \le n$ \\
        $(1,j,k)$ & $f_{2,j-1} + f_{3,k-2}$ & $3 \le j < \ell \leq k \le n$ \\
        $(i,1,k)$ & $f_{1,i-2} + f_{2,1} + f_{3,k-2}$  & $\ell \le i < k \le n$ \\
    \midrule
        $(i,j,k)$ & $f_{1,i-1} + f_{2,j-1}+ f_{3,k-2}$ & $3 \le i < j < \ell \leq k \le n$\\
        $(j,i,k)$ & $f_{1, j-2} + f_{2,i-1} + f_{3,k-2}$ & $3 \le i < \ell \le j < k \le n-1$ \\
        $(i,j,k)$ & $f_{1,i-2} + f_{2,j-2} + f_{3,k-2}$ & $\ell \le i < j < k \le n-1$ \\
        \bottomrule
    \end{tabular}
\end{center}
Let us also write down the vertices of $\MP_{\MB^{\ell+1}_{\ell-1}}$ under $\Pi^{\ell+1}_{\ell-1}$.

\begin{center}
    \begin{tabular}{lll}\toprule
        $v \in V\left(\MP_{\MB^{\ell+1}_{\ell-1}}\right)$ & $\Pi^{\ell+1}_{\ell-1}(v)$ & conditions \\
    \midrule
        $(1,2,k)$ & $f_{3,k-2}$ & $3 \le k \le n$ \\
        $(1,j,k)$ & $f_{2,j-1} + f_{3,k-2}$ & $3 \le j < \ell \leq k \le n$ \\
        $(i,1,k)$ & $f_{1,i-2} + f_{2,1} + f_{3,k-2}$  & $\ell \le i < k \le n$ \\
    \midrule
        $(i,j,k)$ & $f_{1,i-1} + f_{2,j-1}+ f_{3,k-2}$ & $3 \le i < j < \ell \leq k \le n$\\
        $(j,i,k)$ & $f_{1, j-2} + f_{2,i-1} + f_{3,k-2}$ & $3 \le i < \ell \le j < k \le n$ \\
        $(\ell+1,\ell, k)$ & $f_{1,\ell - 1} + f_{2,\ell-1} + f_{3,k-2}$ & $\ell+1 < k \le n-1$\\
    \midrule
        $(\ell, j, k)$ & $f_{1,\ell-2} + f_{2,j-2} + f_{3,k-2}$ & $\ell + 1 < j < k \le n$ \\
        $(i,j,k)$ & $f_{1,i-2} + f_{2,j-2} + f_{3,k-2}$ & $\ell < i < j < k \le n$ \\
        \bottomrule
    \end{tabular}
\end{center}
We define the linear map $\varphi_{(\ell,\ell+1)} : V \rightarrow V$ as follows
\[
\varphi_{(\ell,\ell+1)} (f_{i,j}) = 
\left\{
\begin{tabular}{ll}
    $f_{1,\ell-1}- f_{1,\ell-2} + f_{2,\ell-1}$ & if $(i,j) = (2,\ell-1)$, \\
    $f_{i,j}$ & otherwise.
\end{tabular}
\right.
\]
It is clear that this map is unimodular, so let us consider its action on the vertices of the projection of the polytope corresponding to $\MB_{\ell-1}$. From the above table, we see that the only vertices which contain $f_{2,\ell-1}$ are those corresponding to $(\ell,\ell+1,k)$ in $\MB_{\ell-1}$. Note that the matching fields and the projections of the vertices of their polytopes, $\MB^{\ell+1}_{\ell-1}$ and $\MB_{\ell-1}$, coincide for all tuples except for $(\ell, \ell+1, k)$ in $\MB_{\ell-1}$ and $(\ell+1,\ell,k)$ in $\MB^{\ell+1}_{\ell-1}$. The map $\varphi_{(\ell,\ell+1)}$ takes the vertex $\Pi_{\ell-1}(\ell,\ell+1,k)$ to $\Pi^{\ell+1}_{\ell-1}(\ell+1,\ell,k)$.
Hence $\varphi_{(\ell,\ell+1)}$ induces a unimodular transformation from $\MP_{\MB_{\ell-1}}$ to $\MP_{\MB^{\ell+1}_{\ell-1}}$.

\medskip

We now construct tropical maps $\varphi_{(\ell,\lambda)} : V \rightarrow V$ which take the matching field polytope associated to $\MB^{\lambda-1}_{\ell-1}$ to $\MB^{\lambda}_{\ell-1}$ for each $\lambda \in \{\ell+2, \dots, n-1 \}$. We define the tropical map $\varphi_{(\ell,\lambda)} = \varphi_{w_{(\ell, \lambda)}, F_{(\ell,\lambda)}}$ as follows
\[
w_{(\ell,\lambda)} = \Pi^{\ell+1}_{\ell-1} \left(
\begin{bmatrix}
 0 & \dots & -1 & 0 & \dots & 0 &  1 & 0 & \dots 0 \\
 0 & \dots &  1 & 0 & \dots & 0 & -1 & 0 & \dots 0 \\
 0 & \dots &  0 & 0 & \dots & 0 &  0 & 0 & \dots 0
\end{bmatrix}
\right)
= -f_{1,\ell-2} + f_{1,\lambda-2} + f_{2,\ell-1} -f_{2,\lambda-2}
\]
where the non-zero entries of the above matrix lie in columns $\ell$ and $\lambda$. We also define
\begin{align*}
F_{(\ell,\lambda)} &= \conv \left\{
\underline{0}, \ \Pi^{\ell+1}_{\ell-1} \left(
\begin{bmatrix}
0 & \dots & 0 &  0 &  1 & \dots &  1 &  0 & 0 & \dots & 0  \\
0 & \dots & 0 & -1 & -1 & \dots & -1 & -1 & 0 & \dots & 0  \\
0 & \dots & 0 &  0 &  0 & \dots &  0 &  0 & 0 & \dots & 0  \\
\end{bmatrix}
\right)
\right\} \\
&= \conv \left\{
\underline{0}, \ -f_{2,\ell - 1}+ \sum_{\mu = \ell}^{\lambda-1} (f_{1,\mu} - f_{2,\mu})
\right\},
\end{align*}
where the columns of the above matrix with entries $(0,-1,0)^T$ are columns $\ell$ and $\lambda$.

We write down the image of the vertices of the polytope corresponding to $\MB^{\lambda-1}_{\ell-1}$ and their image under $\varphi_{(\ell,\lambda)}$ in Table~\ref{tab:vertices_under_phi_ell_lambda}. From the table we see that the only vertices which are changed by $\varphi_{(\ell, \lambda)}$ are those corresponding to $(\ell,\lambda,k)$ where $k \in \{\lambda+1, \dots, n \}$. The image of the vertex corresponding to $(\ell, \lambda, k)$ is the vertex corresponding to $(\lambda, \ell, k)$. These are precisely the vertices which differ between the polytopes of $\MB^{\lambda-1}_{\ell-1}$ and $\MB^\lambda_{\ell-1}$. And so, Proposition~\ref{prop:convex} implies that $\varphi_{(\ell, \lambda)}$ is a combinatorial mutation taking the polytope corresponding to $\MB^{\lambda-1}_{\ell-1}$ to $\MB^\lambda_{\ell-1}$.

\begin{table}
    \centering
    \resizebox{\textwidth}{!}{
    \begin{tabular}{clll}\toprule
        $v \in V\left(\MP_{\MB^{\lambda - 1}_{\ell-1}}\right)$ 
        & $\Pi^{\ell+1}_{\ell-1}(v)$ 
        & $\varphi_{(\ell,\lambda)}\left(\Pi^{\ell+1}_{\ell-1}(v)\right)$
        & conditions \\
    \midrule
        $(1,2,k)$ & $f_{3,k-2}$ & $f_{3,k-2}$ & $3 \le k \le n$ \\
        $(1,j,k)$ & $f_{2,j-1}+f_{3,k-2}$ & $f_{2,j-1}+f_{3,k-2}$ & $3 \le j < \ell \leq k \le n$ \\
        $(i,j,k)$ & $f_{1,i-1}+f_{2,j-1}+f_{3,k-2}$ & $f_{1,i-1}+f_{2,j-1}+f_{3,k-2}$ & $2 \le i < j < \ell \leq k \le n$ \\
    \midrule
        $(i,1,k)$ & $f_{1,i-2} +f_{2,1}+f_{3,k-2}$ & $f_{1,i-2} +f_{2,1}+f_{3,k-2}$ & $\ell \le i < k \le n$ \\
        $(i,2,k)$ & $f_{1,i-2} +f_{3,k-2}$ & $f_{1,i-2} +f_{3,k-2}$ & $\ell \le i < k \le n$ \\
        $(j,i,k)$ & $f_{1,j-2}+f_{2,i-1}+f_{3,k-2}$ & $f_{1,j-2}+f_{2,i-1}+f_{3,k-2}$ & $3 \le i<\ell \le j < k \le n$ \\
    \midrule
        $(i,\ell,k)$ & $f_{1,i-2} + f_{2,\ell-1}+f_{3,k-2}$ & $f_{1,i-2} + f_{2,\ell-1}+f_{3,k-2}$ & $\ell < i < \lambda \leq k \le n$ \\
        $(\ell,\lambda,k)$ & $f_{1,\ell-2}+f_{2,\lambda-2} + f_{3,k-2}$ & $f_{1,\lambda-2} + f_{2,\ell-1}+f_{3,k-2}$ & $\lambda < k \le n$ \\
        $(\ell,j,k)$ & $f_{1,\ell-2} + f_{2,j-2}+f_{3,k-2}$ & $f_{1,\ell-2} + f_{2,j-2}+f_{3,k-2}$ & $\ell < j < k \le n$ \\
    \midrule
        $(i,j,k)$ & $f_{1,i-2}+f_{2,j-2}+f_{3,k-2}$ & $f_{1,i-2}+f_{2,j-2}+f_{3,k-2}$ & $\ell < i < j < k \le n$ \\
    \bottomrule
    \end{tabular}
    }
    \caption{For each fixed $\lambda \in \{\ell+1, \dots, n-1\}$, we write down the vertices of the polytope $\varphi_{(\ell,\lambda)}\left(\Pi^{\ell+1}_{\ell-1}\left(\MP_{\MB^{\lambda-1}_{\ell-1}}\right)\right)$. }
    \label{tab:vertices_under_phi_ell_lambda}
\end{table}

\medskip

Now we go to the case $\lambda \in \{n+1,\ldots,n+\ell-1\}$. Let $\lambda'=\lambda-n$. 
We define the tropical maps $\varphi_{(\ell,\lambda')} = \varphi_{w_{(\ell,\lambda')}, F_{(\ell,\lambda')}}$
for $\lambda' \in \{1, \dots, \ell-2 \}$ as follows
\begin{align*}
w_{(\ell,\lambda')} &= \Pi^{\ell+1}_{\ell-1} \left(
\begin{bmatrix}
 0 & \dots &  1 & 0 & \dots & 0 & -1 & 0 & \dots 0 \\
 0 & \dots & -1 & 0 & \dots & 0 &  1 & 0 & \dots 0 \\
 0 & \dots &  0 & 0 & \dots & 0 &  0 & 0 & \dots 0
\end{bmatrix}
\right) \\
&= \left\{
\begin{tabular}{ll}
    $f_{1,1} -f_{1,\ell-2}  +f_{2,\ell-1} $ & if $\lambda'$ = 2 \\
    $f_{1,\lambda'-1} -f_{1,\ell-2} -f_{2,\lambda'-1} +f_{2,\ell-1} $ & otherwise.
\end{tabular}
\right.
\end{align*}
Note that the non-zero entries of the above matrix lie in columns $\lambda'$ and $\ell$. We also define
\begin{align*}
F_{(\ell,1)} &= \conv \left\{
\underline{0}, \ \Pi^{\ell+1}_{\ell-1} \left(
\begin{bmatrix}
 0 & 0 & \dots & 0 &  0 &  1 & \dots &  1  \\
-1 & 0 & \dots & 0 & -1 & -1 & \dots & -1  \\
 0 & 0 & \dots & 0 &  0 &  0 & \dots &  0  \\
\end{bmatrix}
\right)
\right\}, \\
F_{(\ell,\lambda')} &= \conv \left\{
\underline{0}, \ \Pi^{\ell+1}_{\ell-1} \left(
\begin{bmatrix}
0 & \dots & 0 & -1 & -1 & \dots & -1 & -1 & 0 & \dots & 0  \\
0 & \dots & 0 &  0 &  1 & \dots &  1 &  0 & 0 & \dots & 0  \\
0 & \dots & 0 &  0 &  0 & \dots &  0 &  0 & 0 & \dots & 0  \\
\end{bmatrix}
\right)
\right\} \ \textrm{for } \lambda' \ge 2,
\end{align*}
where the columns of $F_{(\ell,1)}$ with entries $(0,-1,0)^T$ are columns $1$ and $\lambda'$ and the columns of $F_{(\ell,\lambda')}$ with entries $(-1,0,0)^T$ are columns $\lambda'$ and $\ell$. 

The map $\varphi_{(\ell,1)}$ changes the vertices corresponding to $(1,\ell,k)$ where $k \in \{\ell+1, \dots, n \}$. As a result, the image of these vertices under $\varphi_{(\ell,1)}$ are vertices of $\MB^{n+1}_{\ell-1}$ corresponding to the tuples $(1,\ell,k)$. All other vertices are fixed by the tropical map.
Now, it follows by Proposition~\ref{prop:convex} that $\varphi_{(\ell,1)}$ is a combinatorial mutation from the polytope of $\MB^{n-1}_{\ell-1}$ to $\MB^{n+1}_{\ell-1}$.

Similarly, for the map $\varphi_{(\ell,\lambda')}$ where $\lambda' \in \{2, \dots, \ell-2 \}$, we can show that the only vertices of the polytope of $\MB^{\lambda-1}_{\ell-1}$ which change are those corresponding to the tuples $(\ell,\lambda',k)$ where $k \in \{\ell+1, \dots, n \}$. As a result, the image of these vertices are precisely the vertices of $\MB^{\lambda}_{\ell-1}$ corresponding to the tuples $(\lambda',\ell,k)$. Since these are precisely the vertices which differ between the matching fields  $\MB^{\lambda-1}_{\ell-1}$ and $\MB^\lambda_{\ell-1}$, it follows that $\varphi_{(\ell,\lambda')}$ induces a combinatorial mutation between these polytopes.

\medskip

The final step is to show that the matching field polytopes for $\MB^{n+\ell-2}_{\ell-1}$ under the projection $\Pi^{\ell+1}_{\ell-1}$ and the polytope for $\MB_\ell$ under the projection $\Pi_{\ell}$ are unimodular equivalent. We write down the vertices of these polytopes that do not coincide in the table below.

\begin{center}
\resizebox{\textwidth}{!}{
\begin{tabular}{ll|lll}
\toprule
    $v \in V(\MP_{\MB^{n+\ell-2}_{\ell-1}})$ & $\Pi^{\ell+1}_{\ell-1}(v)$ & $v' \in V(\MP_{\MB_\ell})$ & $\Pi_\ell(v')$ & conditions \\
\midrule
    $(\ell,\ell-1,k)$ & $f_{1,\ell-2} +f_{2,\ell-2}+f_{3,k-2}$ & $(\ell-1,\ell,k)$  & $f_{1,\ell-2} + f_{2,\ell-1}+f_{3,k-2}$ & $\ell+1 \le k \le n$ \\
\bottomrule
\end{tabular}
}
\end{center}
We define the linear map $\varphi_{(\ell,\ell-1)} : V \rightarrow V$ by
\[
\varphi_{(\ell,\ell-1)}(f_{i,j}) = 
\left\{
\begin{tabular}{ll}
    $f_{1,\ell-2} -f_{2,\ell-2}+f_{2,\ell-1}$ & if $(i,j) = (1,\ell-2)$, \\
    $f_{i,j}$ & otherwise. 
\end{tabular}
\right.
\]
Consider the action of this map, which is clearly unimodular, on the vertices of
$\MB^{n+\ell-2}_{\ell-1}$. Note that the only vertices that contain $f_{1,\ell-2}$ are those described in the table above, i.e.~those corresponding to $(\ell,\ell-1, k)$ for some $k$. Hence, the image of the vertices of $\MB^{n+\ell-2}_{\ell-1}$ under $\varphi_{(\ell,\ell-1)}$ coincide with the vertices of the polytope of $\MB_\ell$. Therefore, these polytopes are unimodular equivalent.
\end{proof}

\subsection{Convexity of mutations}\label{sec:mutations_convexity}

In this section we collect the results used throughout the above proofs. These results show that the image of the matching field polytopes under their respective tropical map $\varphi_{(\ell,\lambda)}$ have the same number of vertices as the original polytope. In particular, no new vertices are created as a result of applying the tropical map. Moreover, we show that all images are convex and so they are given by the convex hull of the images of the vertices. Figure~\ref{fig:mutation_convexity} shows that for an arbitrary polytope $P$ and tropical map $\varphi$, the image $\varphi(P)$ need not be convex and may have different number of vertices.

\begin{figure}
    \centering
    \resizebox{\textwidth}{!}{    \includegraphics{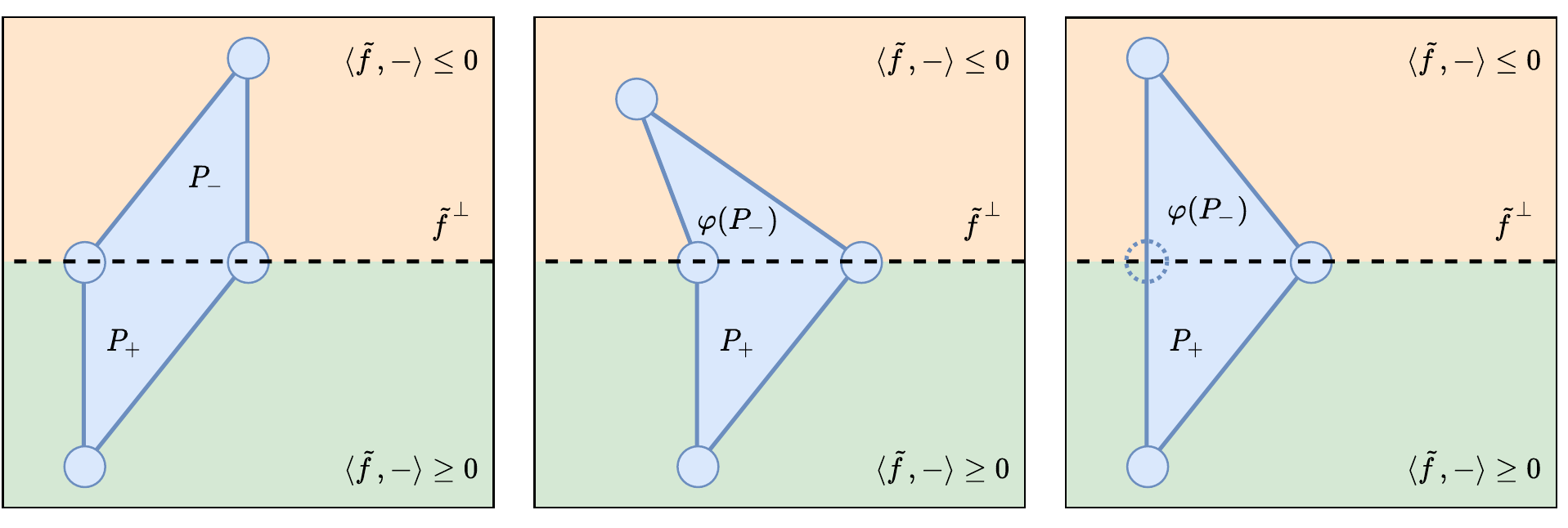}} 
    \caption{Depiction of what may happen when applying a tropical map $\varphi$ to a polytope. The left most diagram is the original polytope $P$ with the hyperplane $\tilde f^\perp$ diving the space into two regions, see Lemma~\ref{lem:inner_prod_F} for notation. The middle diagram shows how $\varphi(P)$ may fail to be convex. The rightmost diagram shows how $\varphi(P)$ may have different number of vertices to $P$.}
    \label{fig:mutation_convexity}
\end{figure}

\smallskip

\noindent\textbf{Notation.} Throughout this section we refer to the combinatorial mutations $\varphi_{(\ell, \lambda)}$ defined in Theorems~\ref{thm:gr3n_mutate_01_block}, \ref{thm:gr3n_mutate_12_block} or \ref{thm:gr3n_mutate_ell_block} where $\lambda \in \{\ell+2, \dots, n-1 \} \cup \{ 1, \dots, \ell - 1\}$. Note that in the theorems, the symbol $\lambda'$ is used for the values $\{1, \dots, \ell - 1 \}$ however in this section we will simply write $\lambda$.

\begin{lemma}\label{lem:inner_prod_F}
Let $\varphi_{(\ell,\lambda)} = \varphi_{w_{(\ell,\lambda)},F_{(\ell,\lambda)}}$ be a tropical map from Theorems~\ref{thm:gr3n_mutate_01_block}, \ref{thm:gr3n_mutate_12_block} or \ref{thm:gr3n_mutate_ell_block}. Suppose $F := F_{(\ell,\lambda)}= \conv\{\underline{0}, \tilde{f} \}$ and let $\Pi = \Pi^{\ell+1}_{\ell-1}$ be the projection from the same Theorem. For each vertex $u = (i,j,k)$ of the matching field polytope of $\MB^{\lambda-1}_{\ell-1}$ we have
\[
\langle \Pi(u), \Pi(\tilde{f}) \rangle =
\left\{
\begin{tabular}{ll}
    $-1$ & if $i = \ell, \ j = \lambda$,  \\
    $1$ & if $(i,j)\in \mathcal{A}_{(\ell,\lambda)}$,\\
    $0$ & otherwise.
\end{tabular}
\right.
\]
Here $\mathcal{A}_{(\ell,\lambda)}$ is the set consisting of $(i,j)$ for which $1\leq i,j\leq n$ and one of the following holds:
\begin{itemize}
    \item $\ell < i < \lambda < j$,
    \item $j < \ell < i < \lambda$,
    \item $i < \lambda < j < \ell$,
    \item $\lambda < j < \ell < i$.
\end{itemize}
\end{lemma}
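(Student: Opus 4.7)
The plan is a direct bilinear computation: expand both $\Pi(u)$ and $\Pi(\tilde{f})$ in the canonical basis $\{f_{i,j}\}$ of $V$ and read off the inner product by matching coefficients. I will sketch the argument for the generic case from Theorem~\ref{thm:gr3n_mutate_ell_block} with $\lambda \in \{\ell+2, \ldots, n-1\}$; the remaining cases ($\lambda' \in \{1, \ldots, \ell-1\}$ from the same theorem, and the base steps from Theorems~\ref{thm:gr3n_mutate_01_block} and \ref{thm:gr3n_mutate_12_block}) proceed identically, using the explicit formulas for $\tilde{f}$ and $\Pi$ in each case.

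The first structural observation is that every projection $\Pi$ appearing in these theorems preserves rows, in the sense that each standard basis vector $e_{r,c} \in \RR^{3 \times n}$ is sent either to $0$ or to a basis vector $f_{r,c'}$ in the same row. Since every $\tilde{f}$ in sight has vanishing row-$3$ component, the piece $e_{3,k}$ of a vertex $u = (i,j,k) = e_{1,i} + e_{2,j} + e_{3,k}$ contributes nothing to the pairing, and bilinearity gives
\[
\langle \Pi(u), \Pi(\tilde{f}) \rangle \;=\; \langle \Pi(e_{1,i}), \Pi(\tilde{f}) \rangle + \langle \Pi(e_{2,j}), \Pi(\tilde{f}) \rangle.
\]
Each of the two remaining terms is a single signed coefficient in the expansion of $\Pi(\tilde{f})$.

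Next I would describe $\Pi(e_{1,c})$ and $\Pi(e_{2,c})$ directly from the matrix defining $\Pi^{\ell+1}_{\ell-1}$. Away from a few exceptional columns these are shifted basis vectors $f_{r,c'}$, with the crucial exception at row $2$, column $\ell$: in $\Pi^{\ell+1}_{\ell-1}$ one has $\Pi(e_{2,\ell}) = f_{2,\ell-1}$, which is precisely the basis vector that the unswapped projection $\Pi_{\ell-1}$ assigns to $e_{2,\ell+1}$. This collision is the combinatorial engine of the mutation. Similarly I would read off $\Pi(\tilde{f})$ from its defining pre-image matrix: its row-$1$ part is a sum of $f_{1,\mu}$ with coefficient $+1$ and its row-$2$ part is a sum of $f_{2,\mu}$ with coefficient $-1$ (together with the extra term $-f_{2,\ell-1}$), each supported on a contiguous range determined by $\ell$ and $\lambda$.

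Finally I would carry out a case analysis on $(i,j)$, subject to the ordering constraints imposed on the triple $(p,q,r)$ by $\MB^{\lambda-1}_{\ell-1}$. For each admissible configuration, the two coefficients are read off: the first contribution is $+1$ precisely when $\Pi(e_{1,i})$ lies in the row-$1$ support of $\Pi(\tilde{f})$, and the second is $-1$ precisely when $\Pi(e_{2,j})$ lies in the row-$2$ support. The four configurations in $\mathcal{A}_{(\ell,\lambda)}$ are exactly those in which only the first event occurs, giving $+1$; the lone configuration $(i,j) = (\ell, \lambda)$ is the one in which only the second occurs (via the collision at column $\ell$ and its row-$2$ counterpart at column $\lambda$), giving $-1$; for all other admissible $(i,j)$ either both contributions vanish or they cancel. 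The main obstacle will be the index bookkeeping: $\Pi$ applies a shift that jumps across the block boundary at $\ell-1$, and the row-$2$ collision breaks any uniform formula, so one must carefully translate between original column indices and basis labels of $V$ before the case analysis resolves cleanly. Once this dictionary is fixed, the four disjoint branches of $\mathcal{A}_{(\ell,\lambda)}$ emerge naturally as the four orderings of $\{i,j\}$ relative to $\{\ell, \lambda\}$ compatible with the matching-field constraints and with the supports of $\tilde{f}$.
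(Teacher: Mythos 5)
Your plan matches the paper's: both prove the lemma by direct case analysis on the parameters $\ell,\lambda$, writing down the explicit formulas for $\Pi$ and $\tilde f$ in each regime and reading off the pairing against the vertices, subject to the constraints that $\MB^{\lambda-1}_{\ell-1}$ imposes on which tuples $(i,j,k)$ actually occur. Your organizing observation that the pairing decomposes into a row-$1$ and a row-$2$ contribution (the row-$3$ part vanishing since $\tilde f$ has no row-$3$ support) is a clean framing that the paper leaves implicit, but the underlying column-index bookkeeping through the shifted projections is the same.
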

\begin{proof}
We fix $V$ to be the vector space $\RR^{3 \times (n-3) }$ with canonical basis $f_{i,j}$ where $1 \le i \le 3$ and $1 \le j \le n-3$. To show this lemma holds, we exhaustively check that it holds for each $\ell$ and $\lambda$. In particular we partition the cases as follows
\begin{itemize}
    \item $\ell = 1$ and $3 \le \lambda \le n-2$,
    \item $2 \le \ell < \lambda \le n-1$ and $\lambda \ge \ell+2$,
    \item $3 \le \ell \le n-2$ and $\lambda = 1$,
    \item $4 \le \ell \le n-2$ and $2 \le \lambda \le i-2$.
\end{itemize}

\smallskip

\noindent\textbf{Case 1.} Assume that $\ell = 1$ and $3 \le \lambda \le n-2$. We have
\begin{align*}
\Pi^2_0 &= 
\begin{bmatrix}
 f_{1,1} & 0 & f_{1,2} & f_{1,3} & \dots & f_{1, n-3} & 0 & 0 \\
 0 & 0 & f_{2,1} & f_{2,2} & \dots & f_{2, n-4} & f_{2, n-3} & 0 \\
 0 & 0 & f_{3,1} & f_{3,2} & \dots & f_{2, n-4} & f_{3, n-3} & 0 
\end{bmatrix}, \\
F_{(1,\lambda)} &= \conv
\left\{
\underline{0}, \ 
\Pi^2_0 \left(
\begin{bmatrix}
-1 & 0 & \dots & 0 & -1 & -1 & \dots & -1 \\
 0 & 0 & \dots & 0 &  0 &  1 & \dots & 1  \\
 0 & 0 & \dots & 0 &  0 &  0 & \dots & 0  
\end{bmatrix}
\right)
\right\}\\
&= \conv \left\{
\underline{0}, \ -f_{2,\ell - 1} +\sum_{\mu = \ell}^{\lambda-1} (f_{1,\mu} - f_{2,\mu})
\right\} \\
&= \conv\{\underline{0}, \tilde{f} \}.
\end{align*}
In the above matrix corresponding to $\tilde{f}$, the columns with entries $(-1,0,0)$ are columns one and $\lambda$.  Consider the vertices of the matching field polytope for $\MB^{\lambda-1}_0$. We observe the following
\begin{itemize}
    \item $\langle u, \tilde{f} \rangle = -1$ if and only if $u = (1,\lambda,k)$ where $k \ge \lambda+1$,
    \item $\langle u, \tilde{f} \rangle = 1$ if and only if $u = (i,j,k)$ where $1 < i < \lambda < j < k$.
\end{itemize}

\noindent\textbf{Case 2.} Assume that $2 \le \ell < \lambda \le n-1$ and $\lambda \ge \ell+2$. We have
\begin{align*}
\Pi^3_1 &= 
\begin{bmatrix}
0 & 0 & f_{1,1} & f_{1,2} & \cdots & f_{1,n-3} & 0 \\
0 & f_{2,1} & 0 & f_{2,2} & \cdots & f_{2,n-3} & 0 \\
0 & 0 & f_{3,1} & f_{3,2} & \cdots & f_{3,n-3} & 0 
\end{bmatrix} ,\\
\Pi^{\ell+1}_{\ell-1} &= 
\begin{bmatrix}
0 & f_{1,1} & f_{1,2} & \dots & 0 & f_{1,\ell - 2} & f_{1,\ell - 1} & \dots & f_{1,n-3} & 0 \\
 f_{2,1} & 0 & f_{2,2} & \dots & f_{2,\ell-2} & f_{2,\ell - 1} & 0 & \dots & f_{2,n-3} & 0 \\
 0 & 0 & f_{3,1} & \dots & f_{3,\ell-3} & f_{3,\ell - 2} & f_{3,\ell - 1} & \dots & f_{3,n-3} & 0 
\end{bmatrix} \textrm{ for } \ell \ge 3, \\
F_{(\ell,\lambda)} &= \conv \left\{
\underline{0}, \ \Pi^{\ell+1}_{\ell-1} \left(
\begin{bmatrix}
0 & \dots & 0 &  0 &  1 & \dots &  1 &  0 & 0 & \dots & 0  \\
0 & \dots & 0 & -1 & -1 & \dots & -1 & -1 & 0 & \dots & 0  \\
0 & \dots & 0 &  0 &  0 & \dots &  0 &  0 & 0 & \dots & 0  \\
\end{bmatrix}
\right)
\right\} \\
&= \conv \left\{
\underline{0}, \ -f_{2,\ell - 1} +\sum_{\mu = \ell}^{\lambda-1} (f_{1,\mu} - f_{2,\mu})
\right\} \\
&= \conv\{\underline{0}, \tilde{f} \}.
\end{align*}
In the above matrix corresponding to $\tilde{f}$, the columns with entries $(0,-1,0)$ are columns $\ell$ and $\lambda$. Consider the vertices of the matching field polytope for $\MB^{\lambda-1}_{\ell-1}$. We observe the following
\begin{itemize}
    \item $\langle u, \tilde{f} \rangle = -1$ if and only if $u = (\ell,\lambda,k)$ where $k \ge \lambda+1$,
    \item $\langle u, \tilde{f} \rangle = 1$ if and only if $u = (i,j,k)$ where $j < \ell < i < \lambda$ or $\ell < i < \lambda < j$.
\end{itemize}

\noindent\textbf{Case 3.} Assume that $3 \le \ell \le n-2$ and $\lambda = 1$. We have $\Pi^{\ell+1}_{\ell-1}$ as defined above.
\begin{align*}
F_{(\ell,1)} &= \conv \left\{
\underline{0}, \ \Pi^{\ell+1}_{\ell-1} \left(
\begin{bmatrix}
 0 & 0 & \dots & 0 &  0 &  1 & \dots &  1  \\
-1 & 0 & \dots & 0 & -1 & -1 & \dots & -1  \\
 0 & 0 & \dots & 0 &  0 &  0 & \dots &  0  \\
\end{bmatrix}
\right)
\right\} \\
&= \conv\{\underline{0}, \tilde{f} \}.
\end{align*}
In the above matrix corresponding to $\tilde{f}$, the columns with entries $(0,-1,0)$ are columns one and $\ell$. Consider the vertices of the matching field polytope for $\MB^{n-1}_{\ell-1}$. We observe the following
\begin{itemize}
    \item $\langle u, \tilde{f} \rangle = -1$ if and only if $u = (\ell,1,k)$ where $k \ge \ell+1$,
    \item $\langle u, \tilde{f} \rangle = 1$ if and only if $u = (i,j,k)$ where $1 < j < \ell < i$.
\end{itemize}

\noindent\textbf{Case 4.} Assume that $4 \le \ell \le n-2$ and $2 \le \lambda = i-2$. We have $\Pi^{\ell+1}_{\ell-1}$ as defined above.
\begin{align*}
F_{(\ell,\lambda)} &= \conv \left\{
\underline{0}, \ \Pi^{\ell+1}_{\ell-1} \left(
\begin{bmatrix}
0 & \dots & 0 & -1 & -1 & \dots & -1 & -1 & 0 & \dots & 0  \\
0 & \dots & 0 &  0 &  1 & \dots &  1 &  0 & 0 & \dots & 0  \\
0 & \dots & 0 &  0 &  0 & \dots &  0 &  0 & 0 & \dots & 0  \\
\end{bmatrix}
\right)
\right\} \\
&= \conv\{\underline{0}, \tilde{f} \}.
\end{align*}
In the above matrix corresponding to $\tilde{f}$, the columns with entries $(-1,0,0)$ are columns $\lambda$ and $\ell$. Consider the vertices $u$ of the matching field polytope for $\MB^{n-1}_{\ell-1}$. We observe the following
\begin{itemize}
    \item $\langle u, \tilde{f} \rangle = -1$ if and only if $u = (\ell,1,k)$ where $k \ge \ell+1$,
    \item $\langle u, \tilde{f} \rangle = 1$ if and only if $u = (i,j,k)$ where $i< \lambda < j < \ell $ or $\lambda < j < \ell < i$.
\end{itemize}
\end{proof}

We now show that, for each matching field polytope in the proofs of the above theorems, there are no edges which pass through the $\tilde{f}^\perp$.

\begin{lemma}\label{lem:no_edge_in_matching_poly}
Take $\ell, \lambda, \tilde{f}$ and $\Pi$ as in the statement of Lemma~\ref{lem:inner_prod_F}. Suppose $u, v$ are vertices of the matching field polytope of $\MB^{\lambda-1}_{\ell-1}$ such that $\langle\Pi(u), \Pi(\tilde{f}) \rangle = -1$ and $\langle\Pi(v), \Pi(\tilde{f}) \rangle = 1$. Then the segment from $u$ to $v$ is not an edge of the matching field polytope of $\MB^{\lambda-1}_{\ell-1}$.
\end{lemma}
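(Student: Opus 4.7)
The plan is to show that $[u,v]$ is not a face of the matching field polytope by exhibiting two other vertices $u'$ and $v'$ of $\MP_{\MB^{\lambda-1}_{\ell-1}}$, with $\{u',v'\}\cap\{u,v\}=\emptyset$, such that $u+v=u'+v'$. Once such $u',v'$ are produced, the midpoint $\frac{1}{2}(u+v)=\frac{1}{2}(u'+v')$ lies in the relative interior of both segments $[u,v]$ and $[u',v']$. Since any face of a polytope containing a relative-interior point of a segment with endpoints in the polytope must contain those endpoints, the supposed edge $[u,v]$ would have to contain $u'$ and $v'$ as well, which is impossible.

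The construction of $u'$ and $v'$ is by a direct exchange between the rows of $u$ and $v$: swap their second-row entries and, when necessary for the resulting triples to meet the conditions of Definition~\ref{def:int_mf}, also their third-row entries. To illustrate in Case~1 of Lemma~\ref{lem:inner_prod_F}, we have $u=(1,\lambda,k_1)$ with $k_1>\lambda$ and $v=(i,j,k_2)$ with $1<i<\lambda<j<k_2$. Setting
\[
u' = (1,j,k_2), \qquad v' = (i,\lambda,k_1),
\]
the identity $u+v=u'+v'$ is immediate in $\RR^{3\times n}$. Since $j\ge\lambda$, the matching field $\MB^{\lambda-1}_0$ sends $\{1,j,k_2\}$ to the identity permutation, so $(1,j,k_2)$ is a vertex of $\MP_{\MB^{\lambda-1}_0}$; since $i\ge 2$, it sends $\{i,\lambda,k_1\}$ to the identity as well, so $(i,\lambda,k_1)$ is a vertex. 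Both are distinct from $u$ and $v$, completing Case~1.

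The three remaining cases of Lemma~\ref{lem:inner_prod_F}, together with the two subcases in Case~2, proceed by analogous exchanges. For instance in Case~3 (where $u=(\ell,1,k_1)$ and $v=(i,j,k_2)$ with $1<j<\ell<i<k_2$), the choice $u'=(\ell,j,k_1)$, $v'=(i,1,k_2)$ works by swapping only the second-row entries; the triple $(\ell,j,k_1)$ gives the $(12)$-vertex of $\{j,\ell,k_1\}$ and $(i,1,k_2)$ gives the $(12)$-vertex of $\{1,i,k_2\}$, both distinct from $u$ and $v$.

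The main obstacle is the bookkeeping rather than any conceptual difficulty: for each configuration we must verify against Definition~\ref{def:int_mf} that the exchanged triples $u',v'$ are genuine vertices of $\MP_{\MB^{\lambda-1}_{\ell-1}}$, and in some subcases---particularly when $k_1,k_2$ sit in awkward orders relative to $i,j,\ell,\lambda$---one has to decide whether to swap only the second row or both the second and third rows to keep the underlying $3$-subset of $[n]$ in increasing order. The underlying idea is uniform, and expresses the fact that a negative-weight tableau and a positive-weight tableau always admit a common ``exchange partner'' within the same matching field.
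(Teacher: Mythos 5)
Your proposal is correct and follows essentially the same approach as the paper: in each of the four cases of $\mathcal{A}_{(\ell,\lambda)}$ from Lemma~\ref{lem:inner_prod_F}, exhibit a pair of vertices $u',v'$ of $\MP_{\MB^{\lambda-1}_{\ell-1}}$, distinct from $\{u,v\}$, with $u+v=u'+v'$, and conclude via the midpoint that $[u,v]$ cannot be an edge. Your exchange pairs coincide with the paper's (the paper phrases the exchange as swapping first-row entries while you swap the second and third rows together, but these produce the same pair $\{u',v'\}$), and your explicit statement of the non-degeneracy condition $\{u',v'\}\cap\{u,v\}=\emptyset$ and the midpoint argument makes the justification slightly more careful than the paper's terse appeal to the general fact.
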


\begin{proof}
In general, if $\MP \subset \RR^d$ is a polytope and $a,b,c,d \in V(P)$ are vertices such that $a+b = c+d$, then it follows that the segment from $a$ to $b$ is not an edge of $P$.

\smallskip

By Lemma~\ref{lem:inner_prod_F} we have that $u = (\ell,\lambda,k)$ for some $k \ge \max\{\ell,\lambda \} + 1$. Let us write $v = (i,j,k')$. 
By Lemma~\ref{lem:inner_prod_F} we have that $(i,j) \in \mathcal A_{(\ell, \lambda)}$, and so by definition we have one of the following cases.

\medskip

\noindent\textbf{Case 1.} Assume that $\ell < i < \lambda < j$. Then we have
\[
(\ell,\lambda,k) + (i,j,k') = (i,\lambda,k) + (\ell,j,k').
\]

\noindent\textbf{Case 2.} Assume that $j < \ell < i < \lambda$.
Then we have
\[
(\ell,\lambda,k) + (i,j,k') = (i,\lambda,k) + (\ell,j,k').
\]

\noindent\textbf{Case 3.} Assume that $i < \lambda < j < \ell$.
Then we have
\[
(\ell,\lambda,k) + (i,j,k') = (\ell,j,k) + (i,\lambda,k').
\]

\noindent\textbf{Case 4.} Assume that $\lambda < j < \ell < i$.
Then we have
\[
(\ell,\lambda,k) + (i,j,k') = (\ell,j,k) + (i,\lambda,k').
\]

Note that in each case, the summands appearing on the right hand side of the equation are also vertices of $\MB^{\lambda-1}_{\ell}$.
\end{proof}

We now show that no new vertices are created by applying the map $\varphi_{\ell,\lambda}$.

\begin{lemma}\label{lem:no_extra_vertices_in_image}
Let $\MP \subset \RR^d$ be a rational polytope, $\tilde{f} \in \RR^d$ be a non-zero vector. Suppose that $\MP \subset \{x \in \RR^d : -1 \le \langle x, \tilde{f} \rangle \le 1 \}$. Write $\MP_+ = \{ x \in \MP : 0 \le \langle x, \tilde{f} \rangle \le 1 \}$ and $\MP_- = \{x \in \MP: -1 \le \langle x, \tilde{f} \rangle \le 0 \}$. Let $w$ be a non-zero vector such that $\tilde{f} \in w^\perp$ and define $F = \conv(\underline{0}, \tilde{f})$. Let $\varphi = \varphi_{w,F}$ be the tropical map and note that $\varphi(\MP) = \varphi(\MP_+) \cup \varphi(\MP_-)$ is the union of two polytopes that intersect along a common face. 
Let $V(\varphi(\MP))$ be the vertices of $\varphi(\MP)$.
Then, there exists a vertex $v \in V(\varphi(\MP))$ that is not the image of any vertex of $\MP$ if and only if there exist vertices $u, v \in V(\MP)$ such that $\langle u, \tilde{f} \rangle = -1 $, $\langle v, \tilde{f} \rangle = 1 $ and the segment from $u$ to $v$ is an edge of $\MP$.  
\end{lemma}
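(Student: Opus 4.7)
The plan is to exploit the explicit piecewise-linear structure of $\varphi$. Because $F=\conv\{\underline{0},\tilde{f}\}$, one has $u_{\min}=\min\{0,\langle u,\tilde{f}\rangle\}$ for every $u\in M_\RR$, so $\varphi$ is the identity on $\MP_+$ and the affine shear $u\mapsto u-\langle u,\tilde{f}\rangle w$ on $\MP_-$; the two restrictions agree on $\MP_0:=\MP\cap\tilde{f}^\perp$ since $\langle w,\tilde{f}\rangle=0$. Hence $\varphi(\MP)=\MP_+\cup\varphi(\MP_-)$ is the union of two polytopes glued along the common face $\MP_0$, and $\varphi$ is an affine isomorphism on each piece. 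Consequently every vertex of $\varphi(\MP)$ is either the image of a vertex of $\MP$, or else corresponds to a vertex of $\MP_+$ (equivalently of $\MP_-$) that is \emph{not} a vertex of $\MP$; and such a ``cutting'' vertex necessarily lies on $\tilde{f}^\perp$ and arises as the transverse intersection of an edge of $\MP$ with $\tilde{f}^\perp$.

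For the only-if direction, I would take $p\in V(\varphi(\MP))$ not equal to the image of any vertex of $\MP$. By the previous paragraph $p=[u',v']\cap\tilde{f}^\perp$ for some edge $[u',v']$ of $\MP$ with $\langle u',\tilde{f}\rangle<0<\langle v',\tilde{f}\rangle$. In the setting where this lemma will be invoked, Lemma~\ref{lem:inner_prod_F} forces the vertices of $\MP$ to have height in $\{-1,0,1\}$, so the only possibility is $\langle u',\tilde{f}\rangle=-1$ and $\langle v',\tilde{f}\rangle=1$; taking $u=u'$, $v=v'$ produces the required edge.

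For the if direction, let $[u,v]$ be an edge of $\MP$ with $\langle u,\tilde{f}\rangle=-1$ and $\langle v,\tilde{f}\rangle=1$, and set $m=(u+v)/2\in\MP_0$. Transversality of $[u,v]$ with $\tilde{f}^\perp$ makes $m$ a vertex of each of $\MP_+$ and $\MP_-$, and hence of $\varphi(\MP_-)$. To certify that $m$ is a vertex of the whole $\varphi(\MP)$, I would produce a supporting linear functional $\phi\in N_\RR$ uniquely maximized at $m$. First choose $\phi_0$ in the relative interior of the normal cone $N_{\MP,[u,v]}$ with $\langle w,\phi_0\rangle<0$; such a $\phi_0$ exists provided $w$ does not lie in the tangent cone of $\MP$ along $[u,v]$, a condition that is ruled out by direct inspection of the explicit vectors $w_{(\ell,\lambda)}$. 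Then for any $0<c<-\langle w,\phi_0\rangle$, set $\phi=\phi_0-c\tilde{f}$: on $\MP_+$ the functional $\phi$ is uniquely maximized at $m$ (using that $-\tilde{f}$ is outward-pointing to $\MP_+$ at $m\in H_0$), while its pull-back along the shear, namely $\phi-\langle w,\phi_0\rangle\tilde{f}$, sits in the relative interior of $N_{\MP_-,m}$ and so is uniquely maximized at $m$ on $\MP_-$. Combining the two gives $m\in V(\varphi(\MP))$.

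The hard part is the if direction, where one must rule out the possibility that the two pieces $\MP_+$ and $\varphi(\MP_-)$ meet along the seam $\MP_0$ in such a way as to flatten the corner at $m$. Geometrically the vertex status is transparent---the two segments $[m,v]\subset\MP_+$ and $[m,u+w]\subset\varphi(\MP_-)$ leave $m$ in non-opposite directions whenever $w\notin\RR\cdot(v-u)$---but turning this local picture into a global statement requires either the normal-cone bookkeeping sketched above, or an equivalent extremality case analysis on representations $m=\tfrac{1}{2}(a+b)$ with $a,b\in\varphi(\MP)$, using that $\langle\cdot,\tilde{f}\rangle\ge 0$ on $\MP_+$ and $\langle\cdot,\tilde{f}\rangle\le 0$ on $\varphi(\MP_-)$ to split into the cases $\{a,b\subset\MP_+\}$, $\{a,b\subset\varphi(\MP_-)\}$, and $\{a\in\MP_+,\,b\in\varphi(\MP_-)\}$.
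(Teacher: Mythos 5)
Your decomposition of $\varphi$ into the identity on $\MP_+$ and the shear $u \mapsto u - \langle u,\tilde{f}\rangle w$ on $\MP_-$, glued along $\MP \cap \tilde{f}^\perp$, is exactly the paper's, and your ``only-if'' argument matches the paper's: a vertex of $\varphi(\MP)$ not hit by any $\varphi(v)$ with $v \in V(\MP)$ must lie on $\tilde{f}^\perp$ and be the transverse intersection of an edge of $\MP$ with $\tilde{f}^\perp$, whose endpoints then sit at heights $\pm 1$. Your remark that the $\{-1,0,1\}$-valuedness of the heights is really supplied by Lemma~\ref{lem:inner_prod_F} rather than by the stated hypothesis ``rational polytope'' is fair; the paper's own phrasing (``and the vertices of $\MP$ are rational, it follows that\dots'') is equally loose on this point, and the intended hypothesis is integrality of $\langle v,\tilde{f}\rangle$ for $v \in V(\MP)$.

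Where you depart from the paper is the ``if'' direction, where you have set yourself a strictly harder problem than the lemma asks. As the statement and, more explicitly, the remark immediately after it make clear, $\varphi(\MP)$ is \emph{not} assumed convex here: it is treated as the union of the two polytopes $\MP_+$ and $\varphi(\MP_-)$ glued along a common face, and $V(\varphi(\MP))$ is the vertex set of that complex, namely $V(\MP_+) \cup V(\varphi(\MP_-))$. Under that reading the ``if'' direction is immediate, and is exactly the paper's two-sentence argument: transversality makes $m = e \cap \tilde{f}^\perp$ a vertex of both $\MP_+$ and $\varphi(\MP_-)$, hence a vertex of the complex, and since $\varphi$ fixes $m$ while $m$ lies in the relative interior of $e$, $m$ cannot equal $\varphi$ of any vertex of $\MP$. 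Your normal-cone construction is instead aimed at the stronger claim that $m$ is an extremal point of the convex set $\varphi(\MP)$; that claim is not part of this lemma, is false in general, and is precisely what forces you to import facts external to the hypotheses (inspection of the $w_{(\ell,\lambda)}$ vectors, a tangent-cone condition), breaking the self-containment of the statement. In the paper's logic, convexity of $\varphi(\MP)$ is handled separately by Lemma~\ref{lem:no_edge_in_image}, and the present lemma is only used to show that the vertex set of the union complex, hence a fortiori of the convex polytope, introduces nothing beyond $\varphi(V(\MP))$; that is how Proposition~\ref{prop:convex} puts the pieces together. You correctly diagnosed this step as the hard part, but it is hard only because you were proving a different statement than the one at hand.
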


\begin{proof}
On the one hand, suppose that $x \in V(\varphi(\MP))$ is a vertex not contained in $\varphi(V(\MP))$. Since $\varphi$ is an affine map on $\tilde{f}_+ := \{v : \langle v,\tilde{f} \rangle \ge 0 \}$ and $\tilde{f}_- := \{v : \langle v,\tilde{f} \rangle \le 0 \}$, it follows that $x \in \tilde{f}^\perp$. In particular, $x \in \tilde{f}_+$ hence $\varphi(x) = x$ and so $x \in \MP$. By assumption $x$ is not a vertex of $\MP$ so let $G$ be the face of $\MP$ containing $x$ in its relative interior.
Since $x$ lies in the relative interior of $G$, it follows that $G$ is not properly contained in $\tilde{f}_+$ or $\tilde{f}_-$. We see that $\varphi(G)$ is the union of two faces, namely 
\[
\varphi(G) = \varphi(G \cap \tilde{f}_+) \cup \varphi(G \cap \tilde{f}_-) \subseteq \varphi(\MP_+) \cup \varphi(\MP_-). 
\]
It follows that $\varphi(G \cap \tilde{f}^\perp)$ is a face of $\varphi(\MP_+)$ and $\varphi(\MP_-)$. However, $x$ is contained in the interior of $\varphi(G \cap \tilde{f}^{\perp})$ and by assumption $x$ is a vertex of $\varphi(\MP)$. Hence $G \cap \tilde{f}^\perp =\{x\}$. Since $\tilde{f}^\perp$ is a hyperplane not containing $G$, we have that $\dim(G) = \dim(G \cap \tilde{f}^\perp) + 1$. Therefore $\dim(G) = 1$ and so $G$ is an edge which passes from the interior of $\tilde{f}_+$ to the interior of $\tilde{f}_-$. Since $\MP \subset \{v : -1 \le \langle v, \tilde{f} \rangle \le 1 \}$ and the vertices of $\MP$ are rational, it follows that $V(G) = \{u,v \}$ where $\langle u, \tilde{f} \rangle = -1$ and $\langle v, \tilde{f} \rangle = 1$.

\medskip

On the other hand, suppose $u$ and $v$ are vertices of $\MP$ such that $\langle u, \tilde{f} \rangle = -1$ and $\langle v, \tilde{f} \rangle = 1$. If the segment from $u$ to $v$, call it $e = [u,v]$, is an edge of $\MP$ then we have that $e \cap \tilde{f}_+$ and $\varphi(e) \cap \tilde{f}_-$ are edges in the $1$-skeleton of $\varphi(\MP)$ with common vertex $\{x\} = e \cap \tilde{f}^\perp$. Since $x$ is fixed by $\varphi$ and lies in the relative interior of $e$, it is not a vertex of $\MP$. Hence $x \in V(\varphi(\MP))$ and $x \not\in V(\MP)$.
\end{proof}

\begin{remark}
Note that in the above proof $\varphi(\MP)$ is not assumed to be a polytope. Instead it is treated as the union of two polytopes that share a common face. In the case that the common face is a vertex, the original polytope is a line segment and the result trivially holds. So, we may assume that the common face has dimension at least $2$. In general the notion of the $1$-skeleton of these objects is well-defined, even though they may not be polytopes.
\end{remark}

\begin{lemma}\label{lem:no_edge_in_image}
Let $\ell, \lambda, \tilde{f}$ and $\Pi$ be as in the statement of Lemma~\ref{lem:inner_prod_F}. Let $\varphi = \varphi_{\ell,\lambda}$ be the tropical map. Suppose that $u$ and $v$ are vertices of the matching field polytope $\MP$ corresponding to $\MB^{\lambda-1}_{\ell-1}$ such that $\langle u, f \rangle = -1$ and $\langle v, f \rangle = 1$. Then the line segment from $\varphi(u)$ to $\varphi(v)$ lies in $\varphi(P)$.
\end{lemma}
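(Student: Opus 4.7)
My plan is to construct, for each $t \in [0,1]$, an explicit preimage in $\Pi(\MP)$ of the point $y(t) = (1-t)\varphi(u) + t\varphi(v)$ under $\varphi$. The crucial ingredient will be a pair of additional vertices $c', d'$ of $\MP$ such that $\langle \Pi(c'), \tilde{f}\rangle = \langle \Pi(d'), \tilde{f}\rangle = 0$ and $\Pi(c') + \Pi(d') = \varphi(u) + \varphi(v)$. By Lemma~\ref{lem:inner_prod_F}, $u$ will have the form $(\ell, \lambda, k)$ and $v = (i,j,k')$ with $(i,j)$ in one of the four explicit sub-regions comprising $\mathcal{A}_{(\ell,\lambda)}$. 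In the principal sub-case $\ell < i < \lambda < j$, I would take $c' = (i, \ell, k)$ and $d' = (\lambda, j, k')$ (replacing $\ell$ by $1$ in Case~1 of Lemma~\ref{lem:inner_prod_F}); the remaining three sub-cases would be handled by analogous swaps of the indices $\ell$ and $\lambda$ between the coordinates of $u$ and $v$. Checking that each such $c', d'$ is a vertex of $\MP_{\MB^{\lambda-1}_{\ell-1}}$ lying on $\tilde{f}^\perp$ will follow directly from Definition~\ref{def:int_mf} and Lemma~\ref{lem:inner_prod_F}.

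With $c'$ and $d'$ in hand, I would handle the two halves of the segment separately. For $t \in [\tfrac{1}{2}, 1]$, the relation $\Pi(c') + \Pi(d') = \varphi(u) + \varphi(v)$ lets me rewrite $y(t) = (1-t)\Pi(c') + (1-t)\Pi(d') + (2t-1)\Pi(v)$, which is a convex combination of vertices of $\Pi(\MP)$; since $\langle y(t), \tilde{f}\rangle = 2t - 1 \ge 0$, $\varphi$ fixes $y(t)$, and so $y(t) \in \varphi(\Pi(\MP))$. For $t \in [0, \tfrac{1}{2}]$, I would set $x(t) = (1-2t)\Pi(u) + t\Pi(c') + t\Pi(d')$, again a convex combination of vertices of $\Pi(\MP)$ and thus inside $\Pi(\MP)$; a direct calculation using $\langle \Pi(u), \tilde{f}\rangle = -1$, $\tilde{f} \in w^\perp$, and the defining relation of $c', d'$ yields $\varphi(x(t)) = y(t)$. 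Combining both halves places every $y(t)$ in $\varphi(\Pi(\MP))$.

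The main obstacle will be producing the pair $c', d'$ uniformly across the four sub-regions of $\mathcal{A}_{(\ell,\lambda)}$, since the specific coordinates being swapped depend on how $(i,j)$ sits relative to $\ell$ and $\lambda$ (and on whether one is in the regime $\lambda > \ell$ or $\lambda < \ell$ addressed in Theorem~\ref{thm:gr3n_mutate_ell_block}). The construction is guided by the uniform principle that, whereas Lemma~\ref{lem:no_edge_in_matching_poly} provides vertices $c, d$ with $\Pi(c+d) = \Pi(u+v)$, the required $c', d'$ should satisfy $\Pi(c'+d') = \Pi(u+v) + w$, effectively transferring the coordinate swap encoded in $w$ from $u$ to $v$; in each sub-case the vertex conditions will then reduce to a short check against the definition of $\MB^{\lambda-1}_{\ell-1}$.
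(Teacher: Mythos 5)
Your approach matches the paper's: both proofs find, for each of the four sub-regions of $\mathcal{A}_{(\ell,\lambda)}$, a pair of vertices $c', d'$ on $\tilde{f}^{\perp}$ with $\Pi(c')+\Pi(d')=\varphi(u)+\varphi(v)$ (the paper's $c' = (i,\ell,k)$, $d'=(\lambda,j,k')$ in the case $\ell<i<\lambda<j$ is exactly your choice). If anything you spell out more carefully than the paper why this decomposition yields containment of the whole segment, splitting it at the hyperplane and writing each half as a convex combination of $\{c', d', v\}$ or $\{u, c', d'\}$ and checking $\varphi(x(t))=y(t)$ on the negative side; the paper phrases the conclusion as the segment ``not being an edge,'' which leaves that final step implicit.
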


\begin{proof}
We begin by noting that $\varphi$ acts on the vertices of $\MP$ by sending $(\ell,\lambda, k)$ to $(\lambda, \ell, k)$ for each $k \ge \ell+1$ and fixing all other vertices. By Lemma~\ref{lem:inner_prod_F}, $u = (\ell,\lambda,k)$ for some $k$ and $v = (i,j,k')$ where $v$ belongs to the set $\mathcal{A}_{(\ell,\lambda)}$. 
The proof proceeds similarly to the proof of Lemma~\ref{lem:no_edge_in_matching_poly}. We will show that there exist vertices $u'$ and $v'$ of $\varphi(\MP)$ such that $\varphi(u)+\varphi(v) = u' + v'$ and $\langle u', \tilde{f} \rangle = \langle v', \tilde{f}\rangle = 0 $. It follows from this that the segment from $\varphi(u)$ to $\varphi(v)$ is not an edge of $\varphi(\MP)$.

\smallskip

We now proceed by taking cases on the elements of $\mathcal{A}_{(\ell,\lambda)}$
in Lemma~\ref{lem:inner_prod_F}.

\medskip

\noindent\textbf{Case 1.} Assume that $\ell < i < \lambda < j$. Then we have
\[
(\lambda,\ell,k) + (i,j,k') = (i,\ell,k) + (\lambda,j,k').
\]
We show that $(i,\ell,k)$ and $(\lambda, j, k')$ are indeed vertices of $\varphi(\MP)$ as follows. In the matching field $\MB^{\lambda}_{\ell-1}$, corresponding to $\varphi(\MP)$, we have that $i < \lambda$, hence $(i,\ell,k)$ is a vertex. We also have that $\ell < \lambda < j < k'$ hence $(\lambda,j,k')$ is a vertex.

\noindent\textbf{Case 2.} Assume that $j < \ell < i < \lambda$.
Then we have
\[
(\lambda,\ell,k) + (i,j,k') = (\lambda,j,k) + (i,\ell,k').
\]
We show that $(\lambda, j, k)$ and $(i,\ell,k')$ are indeed vertices of $\varphi(\MP)$ as follows. Note that $\ell < \lambda$, so in the matching field $\MB^{\lambda}_{\ell-1}$, corresponding to $\varphi(\MP)$, we have that $i < \lambda$, hence $(i,\ell,k')$ is a vertex. We also have that $j < \ell < \lambda < k$  hence $(\lambda,j,k)$ is a vertex.

\noindent\textbf{Case 3.} Assume that $i < \lambda < j < \ell$.
Then we have
\[
(\lambda,\ell,k) + (i,j,k') = (i,\ell,k) + (\lambda,j,k').
\]
We show that $(i,\ell,k)$ and $(\lambda,j,k')$ are indeed vertices of $\varphi(\MP)$ as follows. Note that $\lambda < \ell$, so for the matching field $\MB^{\lambda}_{\ell-1}$, corresponding to $\varphi(\MP)$, we have that $i < \lambda$, hence $(i,\ell,k)$ is a vertex. Since $(i,j,k')$ is a vertex and we have that $\lambda < j < \ell < k'$ hence $(\lambda,j,k')$ is a vertex.

\noindent\textbf{Case 4.} Assume that $\lambda < j < \ell < i$.
Then we have
\[
(\lambda,\ell,k) + (i,j,k') = (\lambda,\ell,k) + (i,\ell,k').
\]
We show that $(\lambda,\ell,k)$ and $(i,\ell,k')$ are indeed vertices of $\varphi(\MP)$ as follows. Note that $\lambda < \ell$, so for the matching field $\MB^{\lambda}_{\ell-1}$, corresponding to $\varphi(\MP)$, we have that $j < \ell$, hence $(\lambda,j,k')$ is a vertex. Since $(i,j,k')$ is a vertex and we have that $j < \ell < i$ hence $(i,\ell,k')$ is a vertex.
\end{proof}

\begin{proposition}\label{prop:convex}
Let $\MP$ be the polytope of the matching field $\MB^\lambda_{\ell-1}$. Let $\varphi = \varphi_{(\ell,\lambda)}$ be the tropical map. Then $\varphi(\MP)$ is convex. Moreover its vertices are $\varphi(V(\MP))$, the image of the vertices of $P$.
\end{proposition}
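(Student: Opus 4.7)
The plan is to exploit the piecewise-affine structure of $\varphi = \varphi_{(\ell,\lambda)}$. Since $\tilde{f} \in w^\perp$ (where $F = \conv\{\underline{0}, \tilde{f}\}$), the map $\varphi$ preserves $\langle \cdot, \tilde{f}\rangle$, acts as the identity on the half-space $\tilde{f}_+ := \{x : \langle x, \tilde{f}\rangle \ge 0\}$, and acts as a single affine map on $\tilde{f}_- := \{x : \langle x, \tilde{f}\rangle \le 0\}$. Setting $\MP_\pm = \MP \cap \tilde{f}_\pm$, I obtain $\varphi(\MP) = \varphi(\MP_+) \cup \varphi(\MP_-)$, a union of two convex polytopes meeting along the common face $\MP \cap \tilde{f}^\perp$. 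The goal is to show this union is convex and that its vertex set is $\varphi(V(\MP))$.

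First, I would invoke Lemma~\ref{lem:inner_prod_F} to observe that every vertex of $\MP$ takes value in $\{-1,0,1\}$ under $\langle \cdot, \tilde{f}\rangle$, and then combine Lemma~\ref{lem:no_edge_in_matching_poly} (no edge of $\MP$ joins a height-$(-1)$ vertex to a height-$(+1)$ vertex) with Lemma~\ref{lem:no_extra_vertices_in_image} applied to $\MP$, $\tilde{f}$, and $\varphi$ to conclude that no new vertices are produced, so that every vertex of the union $\varphi(\MP_+) \cup \varphi(\MP_-)$ lies in $\varphi(V(\MP))$. As a by-product, the vertex set $V(\MP_\pm)$ equals the set of vertices of $\MP$ in the respective closed half-space.

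Next, to prove convexity it suffices to show that for any $p \in \varphi(\MP_+)$ and $q \in \varphi(\MP_-)$ the segment $[p,q]$ lies in $\varphi(\MP)$. Because $\varphi(\MP_\pm) \subseteq \tilde{f}_\pm$ and each piece is convex, this reduces to showing that the crossing point $r := [p,q] \cap \tilde{f}^\perp$ lies in the common face $\MP \cap \tilde{f}^\perp$; once that is known, $[p,r] \subseteq \varphi(\MP_+)$ and $[r,q] \subseteq \varphi(\MP_-)$ by convexity of the pieces. To place $r$ in $\MP \cap \tilde{f}^\perp$, I would expand $p = \sum_u \alpha_u u$ and $q = \sum_w \beta_w \varphi(w)$ as convex combinations over $V(\MP_+)$ and $V(\MP_-)$, group terms by vertex height, and compute $r$ explicitly. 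The crucial input is Lemma~\ref{lem:no_edge_in_image}: for each pair $(u,w)$ with $\langle u, \tilde{f}\rangle = 1$ and $\langle w, \tilde{f}\rangle = -1$, that lemma provides $u', v' \in V(\MP) \cap \tilde{f}^\perp$ with $u + \varphi(w) = u' + v'$, so the pair contributes a point of $\MP \cap \tilde{f}^\perp$ (after normalising). Pairs involving at least one height-$0$ vertex contribute directly, since such vertices already lie in $\MP \cap \tilde{f}^\perp$. Assembling all contributions with their weights expresses $r$ as a convex combination of points of the convex set $\MP \cap \tilde{f}^\perp$, forcing $r \in \MP \cap \tilde{f}^\perp$.

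Once $\varphi(\MP)$ is known to be convex, the equality $V(\varphi(\MP)) = \varphi(V(\MP))$ follows from the inclusion obtained in the second paragraph, together with the fact that $\varphi$ merely relabels the vertices of $\MP$ without identifying them, so each $\varphi(v)$ remains extremal in the convex hull. The hard part will be the weight bookkeeping in the third paragraph: converting the pairwise identities of Lemma~\ref{lem:no_edge_in_image} into a single explicit convex combination requires careful tracking across the three height classes of vertices. The remaining steps are straightforward applications of the four preceding lemmas.
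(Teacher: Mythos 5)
Your proposal follows the same overall route as the paper's proof: decompose $\MP$ along $\tilde f^\perp$ into $\MP_\pm$, invoke Lemmas~\ref{lem:inner_prod_F} and \ref{lem:no_edge_in_matching_poly} to identify $\MP_\pm$ with convex hulls of the appropriate height classes of vertices, use Lemma~\ref{lem:no_extra_vertices_in_image} to rule out new vertices, and use Lemma~\ref{lem:no_edge_in_image} to handle segments crossing the hyperplane. The genuine added value of your write-up is the third paragraph: the paper simply asserts that knowing $[\varphi(u),\varphi(v)]\subset\varphi(\MP)$ for vertex pairs $u,v$ at heights $\pm 1$ implies convexity, whereas you correctly observe that one must reduce arbitrary $p\in\varphi(\MP_+)$, $q\in\varphi(\MP_-)$ to the vertex case by locating the crossing point $r=[p,q]\cap\tilde f^\perp$ inside the common face $\MP\cap\tilde f^\perp$, and that this requires the double-sum decomposition $tp_1+(1-t)q_{-1}=c\sum_{u,v}(\alpha_u/a_1)(\beta_v/b_{-1})(u+\varphi(v))$ combined with the pairwise midpoint identities coming from Lemma~\ref{lem:no_edge_in_image}. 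Carrying that bookkeeping through (which you sketch but do not finish) does close the argument; I verified the weights sum correctly once one splits $p,q$ by vertex height. One small caveat: your closing sentence justifies $\varphi(V(\MP))\subseteq V(\varphi(\MP))$ by asserting that $\varphi$ ``merely relabels the vertices... so each $\varphi(v)$ remains extremal,'' which is a bit too quick; injectivity of $\varphi$ alone does not preserve extremality, and the clean way to get this inclusion is to apply Lemma~\ref{lem:no_extra_vertices_in_image} to the inverse tropical map $\varphi^{-1}=\varphi_{-w,F}$ acting on $\varphi(\MP)$, using Lemma~\ref{lem:no_edge_in_image} to supply the no-crossing-edge hypothesis for $\varphi(\MP)$. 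The paper's own proof has the same gap here (it only argues one inclusion), so this is a remark for both rather than a defect specific to your approach.
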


\begin{proof}
Let $F =F_{\ell,\lambda}= \conv\{\underline{0}, \tilde{f}\}$ be the factor of the combinatorial mutation. We let $\MP_+ = \{x \in \MP : \langle x, \tilde{f} \rangle \ge 0  \}$ and $\MP_- = \{x \in \MP : \langle x, \tilde{f} \rangle \le 0 \}$. By Lemma~\ref{lem:inner_prod_F} we have that all edges of $\MP$ lie either in $\MP_+$ or $\MP_-$ and so it follows that $\MP \cap \tilde{f}^\perp = \conv\{ v \in V(\MP) : v \in \tilde{f}^\perp\}$, $\MP_+ = \conv\{v \in V(\MP) : \langle v, \tilde{f} \rangle \ge 0 \}$ and $\MP_- = \conv\{v \in V(\MP) : \langle v, \tilde{f} \rangle \le 0 \}$.

Note that the tropical map fixes all points in $\MP_+$. To see that $\varphi(\MP)$ is convex, by Lemma~\ref{lem:no_edge_in_image}, we have that the line segment from $\varphi(u)$ to $\varphi(v)$ lies in $\varphi(\MP)$ for each pair of vertices $u,v \in \MP$ such that $\langle u, \tilde{f} \rangle = -1$ and $\langle v, \tilde{f} \rangle = 1$.

To see that the vertices of $\varphi(\MP)$ are precisely the image of the vertices of $P$, note that by Lemma~\ref{lem:no_extra_vertices_in_image}, there are no additional vertices in $\varphi(\MP)$.
\end{proof}

\begin{example}[Continuation of Example~\ref{ex:diag} and \ref{exa:b_ell_1}]
For Grassmannian $\Gr(3,5)$, we consider the sequence of combinatorial mutations taking $\MP_{\MB_0}$, the Gelfand-Tsetlin polytope, to the matching field polytope $\MP_{\MB_1}$ given in the proof of Theorem~\ref{thm:gr3n_mutate_01_block}. The proof transforms the polytope as follows 
\[
\MP_{\MB_0} \xrightarrow{\sim}
\MP_{\MB_0^2} \rightarrow
\MP_{\MB_0^3} \rightarrow
\MP_{\MB_1}.
\]
Since $\MP_{\MB_0}$ is unimodular equivalent to $\MP_{\MB_0^2}$, we consider the map $\varphi_{(1,3)}:P_{\MB_0^2} \rightarrow \MP_{\MB_0^3}$. In Figure~\ref{fig:mutation_01_02} we illustrate the action of $\varphi_{(1,3)}$ on the vertex-edge graph of $\MP_{\MB_0^2}$ and compare it to the graph of $\MP_{\MB_0^3}$. The polytopes were calculated in Polymake \cite{polymake:2017}.


\begin{figure}
    \centering
    \resizebox{\textwidth}{!}{    \includegraphics{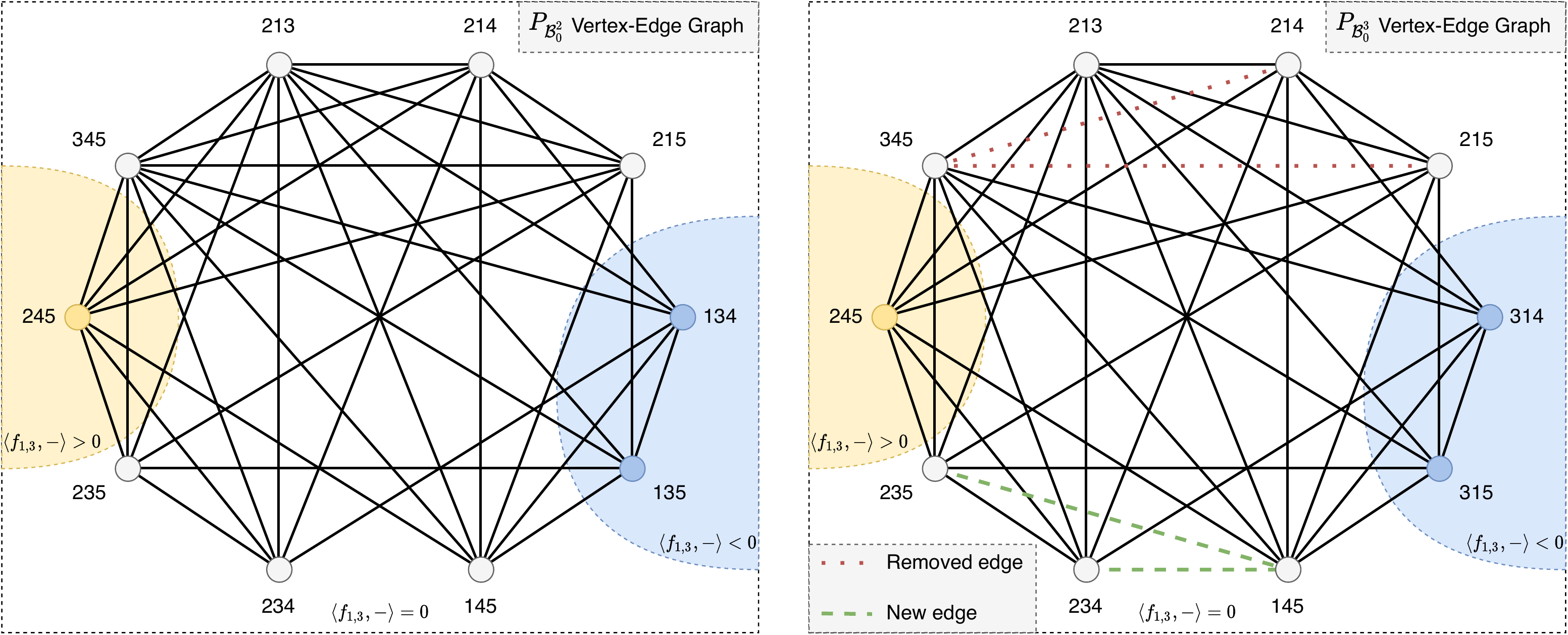}}
    \caption{The vertex-edge graphs of the polytopes $\MP_{\MB_0^2}$ and $\MP_{\MB_0^3}$. The combintorial mutation $\varphi_{(1,3)}$ transforms $\MP_{\MB_0^2}$ into $\MP_{\MB_0^3}$ by sending $134, 135 \in V(\MP_{\MB_0^2})$ to $314, 315$ respectively and fixing all other vertices. The dotted and dashed lines show which edges removed and created by the mutation. The shaded regions indicate the open half spaces on each side of the hyperplane $f_{1,3}^\perp = \{x : \langle f_{1,3}, x \rangle = 0\}$.}
    \label{fig:mutation_01_02}
\end{figure}
\end{example}

\subsection{Generalisation to \texorpdfstring{$\Gr(k,n)$}{Gr(k,n)}}\label{sec:mutation_generalise_gkn}

\begin{corollary}
Fix $k$ and $n$. The block diagonal matching field polytopes for $\Gr(k,n)$ are combinatorial mutation equivalent.
\end{corollary}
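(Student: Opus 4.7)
The plan is to generalise the arguments of Theorems~\ref{thm:gr3n_mutate_01_block}, \ref{thm:gr3n_mutate_12_block} and \ref{thm:gr3n_mutate_ell_block} to arbitrary $k$. The essential observation is that for every block diagonal or intermediate matching field $\MB_\ell$ or $\MB_\ell^\lambda$, the associated permutation lies in $\{id, (12)\} \subset S_k$, so each vertex of the polytope has the form $(i,j,r_1,\dots,r_{k-2})$ with $r_1 < \cdots < r_{k-2}$ fixed in diagonal order, and only the first two entries $(i,j)$ depend on $\ell$ and $\lambda$. Thus the matching field polytopes for $\Gr(k,n)$ are obtained from those for $\Gr(3,n)$ by appending $k-2$ ``spectator" coordinates that are the same for all matching fields $\MB_\ell$ and $\MB_\ell^\lambda$.

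I would proceed by induction on $\ell$, showing that $\MP_{\MB_\ell}$ and $\MP_{\MB_{\ell+1}}$ are connected by a sequence of combinatorial mutations passing through the intermediate polytopes $\MP_{\MB_\ell^\lambda}$ for $\lambda \in \{\ell+2, \dots, n+\ell-1\} \setminus \{n\}$. For this I would define projections $\Pi_\ell, \Pi_\ell^{\ell+2}, \Pi_{\ell+1} : \RR^{k \times n} \to \RR^{k \times (n-k)}$ which are direct analogues of those constructed in \S\ref{sec:mutations_gr3n}: the first two rows of these projection matrices are identical to the $\Gr(3,n)$ case, while rows $3, \dots, k$ are chosen so that the vertex coordinates $r_1, \dots, r_{k-2}$ project to independent basis vectors. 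The tropical maps $\varphi_{(\ell,\lambda)} = \varphi_{w_{(\ell,\lambda)},F_{(\ell,\lambda)}}$ are defined exactly as before, with $w_{(\ell,\lambda)}$ and $F_{(\ell,\lambda)}$ supported in the first two rows. By construction, these tropical maps act as the identity on the spectator coordinates, and the unimodular relabellings connecting $\MP_{\MB_\ell}$ to $\MP_{\MB_\ell^{\ell+2}}$ and $\MP_{\MB_\ell^{n+\ell-1}}$ to $\MP_{\MB_{\ell+1}}$ are lifted from the transvections used in the $\Gr(3,n)$ case.

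The main step is to verify that the convexity results of \S\ref{sec:mutations_convexity} carry over. The analogue of Lemma~\ref{lem:inner_prod_F} is immediate, since the inner product $\langle \Pi(u), \Pi(\tilde{f}) \rangle$ depends only on the first two entries of $u = (i,j,r_1,\dots,r_{k-2})$; the conditions on $(i,j)$ defining $\mathcal{A}_{(\ell,\lambda)}$ are unchanged. For Lemmas~\ref{lem:no_edge_in_matching_poly} and \ref{lem:no_edge_in_image}, given vertices $u = (\ell, \lambda, r_1, \dots, r_{k-2})$ and $v = (i, j, s_1, \dots, s_{k-2})$ with $(i,j) \in \mathcal{A}_{(\ell,\lambda)}$, the same Minkowski-sum identities $u + v = u' + v'$ used in the proofs for $\Gr(3,n)$ go through verbatim after appending the spectator entries $(r_1,\dots,r_{k-2})$ and $(s_1,\dots,s_{k-2})$ unchanged to $u'$ and $v'$ respectively. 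One only needs to check that each $u'$ and $v'$ produced is in fact a vertex of the appropriate matching field polytope, i.e.\ that its first two entries define a valid intermediate permutation; this follows directly from the case analysis already carried out.

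The main obstacle I anticipate is bookkeeping rather than conceptual: one must describe all families of vertices of $\MP_{\MB_\ell^\lambda}$ in the $\Gr(k,n)$ case (as in Definition~\ref{def:int_mf}) and verify in each case of Lemma~\ref{lem:no_edge_in_image} that the decomposition $\varphi(u) + \varphi(v) = u' + v'$ produces vertices whose first two coordinates respect the constraints for $\MB_\ell^\lambda$. Once this verification is done, Proposition~\ref{prop:convex} generalises to $\Gr(k,n)$ with an identical proof, and chaining the mutations for $\ell = 0, 1, \dots, n-k$ yields the desired combinatorial mutation equivalence between any pair $\MP_{\MB_\ell}$ and $\MP_{\MB_{\ell'}}$.
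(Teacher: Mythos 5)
Your proposal is correct and follows essentially the same approach as the paper: exploit that block diagonal and intermediate matching fields use only permutations in $\{id,(12)\}$, so the mutation data ($w$ and $F$) live entirely in the first two rows and the convexity lemmas of \S\ref{sec:mutations_convexity} lift unchanged since the relevant inner products and Minkowski-sum identities depend only on the first two coordinates. The paper packages this slightly more cleanly by identifying the block diagonal matching fields of $\Gr(k,n)$ with those of $\Gr(3,n-k+3)$ (not $\Gr(3,n)$, a small slip in your writeup) and embedding $\RR^{3\times(n-k)}\subseteq\RR^{k\times(n-k)}$ so that the same $w$ and $F$ literally define the lifted tropical maps, avoiding a re-derivation of the case analysis.
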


\begin{proof}
If $k = 3$ then the result holds by Theorems~\ref{thm:gr3n_mutate_01_block}, \ref{thm:gr3n_mutate_12_block} and \ref{thm:gr3n_mutate_ell_block}.
We now generalise these results for higher $k$ by extending the projection maps, matching fields and tropical maps used in the proofs of these theorems. 

\medskip

For each projection $\Pi : \RR^{3 \times (n-k+3)} \rightarrow \RR^{3 \times (n-k)}$ for the $\Gr(3,n-k+3)$ case we construct its analogous projection $\Pi' : \RR^{k \times n} \rightarrow \RR^{k \times (n-k)}$ for the $\Gr(k,n)$ case. We view $\RR^{3 \times (n-k)} \subset \RR^{k \times (n-k)}$ as a subspace and so we let $f_{i,j}$ denote the basis of $\RR^{k \times (n-k)}$ which extends the basis for $\RR^{3 \times (n-k)}$ defined in the proofs for the $\Gr(3, n-k+3)$ case.

To construct $\Pi'$, we join to the right hand side of $\Pi$ a zero matrix of size $3 \times (k - 3)$.
Next, for each $i \in \{4, \dots, k \}$, we join an extra row $R_i$ to the bottom of $\Pi$ where
\[
(R_i)_j = \left\{
\begin{tabular}{ll}
    $f_{i, j-i+1}$ & if $ j-i+1 \in \{1, 2, \dots, n-k \},$ \\
    $0$ & otherwise.
\end{tabular}
\right.
\]

\medskip

Note that there is a bijection between the block diagonal matching fields for $\Gr(k,n)$ and $\Gr(3,n-k+3)$ given by sending $\MB_{\ell}$ for $\Gr(k,n)$ to $\MB_{\ell}$ for $\Gr(3,n-k+3)$. Note that in $\Gr(k,n)$, if $\ell \ge n-k+2$ then the matching field $\MB_{\ell}$ is simply the diagonal matching field.

We define the intermediate matching fields for $\Gr(k,n)$ as follows. Let $\MB$ be any intermediate matching field for $\Gr(3,n-k+3)$. Then the corresponding matching field for $\Gr(k,n)$ has tuples
\[
\{ (i_1, i_2, \dots, i_k) :
(i_1, i_2, i_3) \in \MB, \ i_3 < i_4 < \dots < i_k
\}.
\]
Then, for any tropical map $\varphi$ taking the polytope of an intermediate matching field $\MB$ to the polytope of $\MB'$ for $\Gr(3,n-k+3)$, we define its analogue $\varphi'$ for $\Gr(k,n)$. This map acts by
\[
\varphi'(f_{i,j}) = 
\left\{
\begin{tabular}{ll}
    $\varphi(f_{i,j})$ & if $i \le 3$, \\
    $f_{i,j}$ & if $i > 3$. 
\end{tabular}
\right.
\]
Recall that $\RR^{3 \times (n-k)} \subseteq \RR^{k \times (n-k)}$ is a subspace. So if $\varphi = \varphi_{w,F}$ for some vector $w$ and polytope $F$, then the same $w$ and $F$ define $\varphi'$. Hence $\varphi'$ is also a tropical map. 
\end{proof}

\begin{example}[Extension to $k = 5$]
We show how to extend the results for $\Gr(3,6)$ to $\Gr(5,8)$. Let us begin by examining the matching fields $\MB_1$ and $\MB_2$. The tuples of the matching fields are shown in the table below. Note that for each row in the table, the tuples share the same first two entries.

\begin{center}
    \begin{tabular}{llllll}
    \toprule
        & $\Gr(3,6)$ & & & $\Gr(5,8)$ & \\
        & tuple & conditions & & tuple & conditions \\
    \midrule
        \multirow{2}{*}{$\MB_1$} & 
        $(j,1,k)$ & $2 \le j < k \le 6$ & &
        $(j,1,k_1,k_2,k_3)$ & $2 \le j < k_1 < k_2 < k_3 \le 8$ \\
        & $(i,j,k)$ & $2 \le i < j < k \le 6$ & &
        $(i,j,k_1,k_2,k_3)$ & $2 \le i < j < k_1 < k_2 < k_3 \le 8$ \\
    \midrule
        \multirow{3}{*}{$\MB_2$} &
        $(1,2,k)$ & $3 \le k \le 6$ & &
        $(1,2,k_1,k_2,k_3)$ & $3 \le k_1 < k_2 < k_3 \le 8$ \\
        & $(j,2,k)$ & $3 \le j < k \le 6$ & &
        $(j,2,k_1,k_2,k_3)$ & $3 \le j < k_1 < k_2 < k_3 \le 8$ \\
        & $(i,j,k)$ & $3 \le i < j < k \le 6$ & &
        $(i,j,k_1,k_2,k_3)$ & $3 \le i < j < k_1 < k_2 < k_3 \le 8$ \\
    \bottomrule
    \end{tabular}
\end{center}

For $\Gr(3,6)$, let us consider the sequence to combinatorial mutations taking the polytope $\MP_{\MB_1}$ to the block diagonal matching field polytope $\MP_{\MB_2}$.
The projection maps $\Pi$ in the proof of Theorem~\ref{thm:gr3n_mutate_12_block} are of the form
\[
\Pi = 
\begin{bmatrix}
 * & * & * & * & * & * \\
 * & * & * & * & * & * \\
 0 & 0 & f_{3,1} & f_{3,2} & f_{3,3} & 0 \\
\end{bmatrix}.
\]
For each such projection map we define the analogous projection map $\Pi'$ for $\Gr(5,8)$ as follows.
\[
\Pi' = 
\begin{bmatrix}
 * & * & * & * & * & * & 0 & 0\\
 * & * & * & * & * & * & 0 & 0 \\
 0 & 0 & f_{3,1} & f_{3,2} & f_{3,3} & 0 & 0 & 0 \\
 0 & 0 & 0 & f_{4,1} & f_{4,2} & f_{4,3} & 0 & 0 \\
 0 & 0 & 0 & 0 & f_{5,1} & f_{5,2} & f_{5,3} & 0 \\
\end{bmatrix}.
\]
We define the combinatorial mutations taking $\MP_{\MB_1}$ to $\MP_{\MB_2}$ for $\Gr(5,8)$ similarly. Take a tropical maps $\varphi_{w,F}$ in the proof of Theorem~\ref{thm:gr3n_mutate_12_block}. For some projection $\Pi$, we write
\[
w = \Pi \left(
\begin{bmatrix}
 * & * & * & * & * & * \\
 * & * & * & * & * & * \\
 0 & 0 & 0 & 0 & 0 & 0 \\
\end{bmatrix}
\right), \quad
F = \conv\left\{
\underline 0,  \ \Pi\left(
\begin{bmatrix}
 * & * & * & * & * & * \\
 * & * & * & * & * & * \\
 0 & 0 & 0 & 0 & 0 & 0 \\
\end{bmatrix}
\right)
\right\}.
\]
The corresponding tropical map for $\Gr(5,8)$ is given by $\varphi_{w',F'}$ where
\[
w' = \Pi' \left(
\begin{bmatrix}
 * & * & * & * & * & * & 0 & 0 \\
 * & * & * & * & * & * & 0 & 0 \\
 0 & 0 & 0 & 0 & 0 & 0 & 0 & 0 \\
 0 & 0 & 0 & 0 & 0 & 0 & 0 & 0 \\
 0 & 0 & 0 & 0 & 0 & 0 & 0 & 0 \\
\end{bmatrix}
\right), \quad
F' = \conv\left\{
\underline 0,  \ \Pi'\left(
\begin{bmatrix}
 * & * & * & * & * & * & 0 & 0 \\
 * & * & * & * & * & * & 0 & 0 \\
 0 & 0 & 0 & 0 & 0 & 0 & 0 & 0 \\
 0 & 0 & 0 & 0 & 0 & 0 & 0 & 0 \\
 0 & 0 & 0 & 0 & 0 & 0 & 0 & 0 \\
\end{bmatrix}
\right)
\right\}.
\]
The only non-zero entries of the above matrices, that define the tropical map, are in the first two rows. Therefore, the proof that $\MP_{\MB_1}$ is taken to $\MP_{\MB_2}$ by a sequence of combinatorial mutation in the $\Gr(3,6)$ case immediately applies to $\Gr(5,8)$. Note that the intermediate matching fields $\MB_\ell^\lambda$, similarly to $\MB_1$ and $\MB_2$, have the property that $(i_1,i_2,i_3)$ is a tuple in $\MB_\ell^\lambda$ for $\Gr(3,6)$ if and only if for any $i_3 < i_4 < i_5 \le 8$ we have that $(i_1, i_2, i_3, i_4, i_5)$ is a tuple in $\MB_\ell^\lambda$ for $\Gr(5,8)$.
\end{example}

\bibliographystyle{abbrv}
\bibliography{References.bib}

\bigskip
\noindent
\footnotesize 
{\bf Authors' addresses:}

\medskip

{\small\noindent School of Mathematics, University of Bristol, Bristol, BS8 1TW, UK\\
E-mail address: {\tt oliver.clarke@bristol.ac.uk }

\medskip\noindent
Department of Pure and Applied Mathematics, 
Osaka University, Suita, Osaka 565-0871, Japan\\
E-mail address:  {\tt higashitani@ist.osaka-u.ac.jp}

\medskip\noindent Department of Mathematics: Algebra and Geometry, Ghent University, 9000 Gent, Belgium \\
Department of Mathematics and Statistics, 
The Arctic University of Norway, 9037 Troms\o, Norway
\\ E-mail address: {\tt fatemeh.mohammadi@ugent.be}
}
\end{document}